\newtheorem{lemma}{Lemma}[section]
\newtheorem{theorem}[lemma]{Theorem}
\newtheorem{corollary}[lemma]{Corollary}
\newtheorem{proposition}[lemma]{Proposition}
\theoremstyle{definition}
\newtheorem{definition}[lemma]{Definition}
\newtheorem{remark}[lemma]{Remark}
\newtheorem{example}[lemma]{Example}
\newcommand{\define}[1]{{\bfseries\itshape #1}}
\newcommand{\abs}[1]{\ensuremath{\left| #1 \right|}}
\newcommand{\norm}[1]{\ensuremath{\| #1 \|}}
\newcommand{\genus}{\ensuremath{p_{\text{\upshape a}}}}
\newcommand{\coloneq}{\ensuremath{\mathrel{\mathop :}=}}
\renewcommand{\AA}{\ensuremath{\mathbb{A}}} 
\newcommand{\CC}{\ensuremath{\mathbb{C}}} 
\newcommand{\NN}{\ensuremath{\mathbb{N}}}
\newcommand{\PP}{\ensuremath{\mathbb{P}}} 
\newcommand{\QQ}{\ensuremath{\mathbb{Q}}} 
\newcommand{\RR}{\ensuremath{\mathbb{R}}} 
\renewcommand{\SS}{\ensuremath{\mathbb{S}}} 
\newcommand{\ZZ}{\ensuremath{\mathbb{Z}}} 
\renewcommand{\geq}{\geqslant}
\renewcommand{\leq}{\leqslant}
\DeclareMathOperator{\Ann}{Ann}
\DeclareMathOperator{\Area}{area}
\DeclareMathOperator{\codim}{codim}
\DeclareMathOperator{\conv}{conv}
\DeclareMathOperator{\der}{d}
\DeclareMathOperator{\ee}{e}
\DeclareMathOperator{\Grass}{Gr}
\DeclareMathOperator{\HF}{h}
\DeclareMathOperator{\HP}{p}
\DeclareMathOperator{\Ker}{Ker}
\DeclareMathOperator{\Pos}{P}
\DeclareMathOperator{\Proj}{Proj}
\DeclareMathOperator{\rank}{rank}
\DeclareMathOperator{\rr}{r}
\DeclareMathOperator{\sign}{sgn}
\DeclareMathOperator{\Sos}{\Sigma}
\DeclareMathOperator{\Span}{Span}
\DeclareMathOperator{\Sym}{Sym}
\DeclareMathOperator{\transpose}{\textsf{T}}
\DeclareMathOperator{\variety}{V}
\begin{document}


\title[Degree Bounds for Sum-of-Squares Certificates]{Sharp Degree Bounds for
  Sum-of-Squares Certificates\\ on Projective Curves}

\author[G.~Blekherman]{Grigoriy Blekherman}
\address{Greg Blekherman: School of Mathematics \\ Georgia Tech, 686 Cherry
  Street\\ Atlanta\\ GA\\ 30332\\ USA; {\normalfont
    \href{mailto:greg@math.gatech.edu}{\texttt{greg@math.gatech.edu}}.}}

\author[G.G.~Smith]{Gregory G. Smith} 
\address{Gregory G. Smith: Department of Mathematics and Statistics \\
  Queen's University\\ Kingston \\ ON \\ K7L~3N6\\ Canada; {\normalfont
    \href{mailto:ggsmith@mast.queensu.ca}{\texttt{ggsmith@mast.queensu.ca}}.}}

\author[M.~Velasco]{Mauricio Velasco} 

\address{Mauricio Velasco: Departamento de Matem\'aticas\\ Universidad de los
  Andes\\ Carrera 1 No. 18a 10\\ Edificio H\\ Primer Piso\\ 111711 Bogot\'a\\
  Colombia; {\normalfont
    \href{mailto:mvelasco@uniandes.edu.co}%
    {\texttt{mvelasco@uniandes.edu.co}}.}}


\subjclass[2010]{14P05; 12D15, 14H45}

\date{24 June 2018}

\begin{abstract}
  Given a real projective curve with homogeneous coordinate ring $R$ and a
  nonnegative homogeneous element $f \in R$, we bound the degree of a nonzero
  homogeneous sum-of-squares $g \in R$ such that the product $fg$ is again a
  sum of squares. Better yet, our degree bounds only depend on geometric
  invariants of the curve and we show that there exist smooth curves and
  nonnegative elements for which our bounds are sharp.  We deduce the
  existence of a multiplier $g$ from a new Bertini Theorem in convex algebraic
  geometry and prove sharpness by deforming rational Harnack curves on toric
  surfaces.  Our techniques also yield similar bounds for multipliers on
  surfaces of minimal degree, generalizing Hilbert's work on ternary forms.
\end{abstract}

\maketitle


\section{Overview of Results} 
\label{sec:intro}

\noindent
Certifying that a polynomial is nonnegative remains a central problem in real
algebraic geometry and optimization.  The quintessential certificate arises
from multiplying a given polynomial by a second polynomial, that is already
known to be positive, and expressing the product as a sum of squares.
Although the Positivstellensatz guarantees that suitable multipliers exist
over any semi-algebraic set, tight bounds on the degree of multipliers are
exceptionally rare.  Our primary aim is to produce sharp degree bounds for
sum-of-squares multipliers on real projective curves.  In reaching this goal,
the degree bounds also reveal a surprising consonance between real and complex
algebraic geometry.

To be more explicit, fix an embedded real projective curve $X \subset \PP^n$
that is nondegenerate and totally real---not contained in a hyperplane and
with Zariski-dense real points.  Let $R$ be its $\ZZ$-graded coordinate ring
and let $\rr(X)$ denote the least integer $i$ such that the Hilbert polynomial
and function of $X$ agree at all integers greater than or equal to $i$.  For
$j \in \NN$, we write $\Pos_{X,2j} \subset R_{2j}$ and
$\Sos_{X,2j} \subset R_{2j}$ for the cone of nonnegative elements in $R_{2j}$
and the cone of sums of squares of elements from $R_j$, respectively.  Our
first result gives a sharp degree bound on sum-of-squares multipliers in terms
of the fundamental geometric invariants of $X$.

\begin{theorem}
  \label{thm:main}
  For any nondegenerate totally-real projective curve $X \subset \PP^n$ of
  degree $d$ and arithmetic genus $\genus$, any nonnegative element
  $f \in \Pos_{X,2j}$ of positive degree, and all $k \in \NN$ satisfying
  $k \geq \max\bigl\{ \rr(X), \frac{2 \genus}{d} \bigr\}$, there is a nonzero
  $g \in \Sos_{X,2k}$ such that $fg \in \Sos_{X,2j+2k}$.  Conversely, for all
  $n \geq 2$ and all $j \geq 2$, there exist totally-real smooth curves
  $X \subset \PP^n$ and nonnegative elements $f \in \Pos_{X,2j}$ such that,
  for all $k < \max\bigl\{ \rr(X), \frac{2 \genus}{d} \bigr\}$ and for all
  nonzero $g \in \Sos_{X,2k}$, we have $fg \not\in \Sos_{X,2j+2k}$.
\end{theorem}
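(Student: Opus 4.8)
The plan is to treat the two halves of the statement by entirely different methods: the existence of the multiplier will come from a Bertini-type theorem in convex algebraic geometry, and the sharpness from deforming rational Harnack curves on toric surfaces.

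\emph{Existence of a multiplier.} First I would pass from the graded ring to the geometry of the curve. Because $k \geq \rr(X)$, every graded piece $R_i$ that occurs equals $H^0\bigl(X,\mathcal{O}_X(i)\bigr)$, so $\Sos_{X,2i}$ becomes precisely the cone of finite sums $\sum_a s_a\overline{s_a}$ of Hermitian squares of sections of $\mathcal{O}_X(i)$ over $\CC$, and the whole question is about effective divisors and line bundles on the complex curve $X_\CC$ equipped with its real structure. Next I would record the divisorial criterion: writing the zero divisor of a nonnegative element as $2\mathcal{E}+\mathcal{C}+\overline{\mathcal{C}}$ with $\mathcal{E}$ effective and conjugation-invariant and $\mathcal{C}$ reduced, disjoint from its conjugate and from $X(\RR)$, the element is a sum of squares exactly when a certain degree-zero class assembled from $\mathcal{E}$, $\mathcal{C}$, and the hyperplane class can be corrected by conjugation-differences of classes of its complex zeros to become trivial in $\operatorname{Pic}(X_\CC)$, and here the hypothesis $kd\geq 2\genus$ guarantees that the twisted bundles in play are globally generated, so that whenever the class condition can be met an honest sum-of-squares representation exists. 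The construction of $g$ then reduces to a linear-equivalence problem: $fg$ acquires the obstruction class of $f$ plus that of $g$, together with the new correction terms supplied by the complex zeros of $g$, and we must pick the divisor of $g$ — including enough complex zeros in general position — so that the total class becomes trivial. This is exactly where the Bertini theorem enters: in the base-point-free linear system $\bigl|\mathcal{O}_X(j+k)(-\mathcal{E})\bigr|$, whose degree is $kd+\deg\mathcal{C}\geq 2\genus$, a \emph{general totally real} divisor is reduced, is in general position relative to $\mathcal{C}$ and $X(\RR)$, and realizes the class needed to trivialize the obstruction; unwinding this yields a section of $\mathcal{O}_X(j+k)_\CC$ whose Hermitian norm is $fg$ modulo a genuine sum of squares, with $g\neq 0$. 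The crux of this half — and the step I expect to be hardest — is the Bertini theorem in convex algebraic geometry itself: one must show that a \emph{general} member of such a system is not only smooth but totally real while simultaneously avoiding all the convex-geometric bad loci, and it is in proving this that both lower bounds $\rr(X)$ and $\tfrac{2\genus}{d}$ on $k$ are used in an essential way.

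\emph{Sharpness.} Fix $n\geq 2$ and $j\geq 2$. I would build the witness curve by patchworking. From a suitable lattice polygon and its toric surface $S$, take a rational Harnack curve $X_0\subseteq S$ in an ample class — a curve of geometric genus $0$ with $\genus$ nodes carrying the maximal possible number of real branches — and apply Viro's combinatorial smoothing inside a projective embedding $S\hookrightarrow\PP^n$ to deform $X_0$ to a smooth totally-real curve $X\subseteq\PP^n$ that is an $M$-curve, so that the component group of $\operatorname{Pic}^0(X_\CC)(\RR)$ is as large as it can be. On the normalization $\PP^1$ of $X_0$ every nonnegative form is a sum of two squares, but the low-degree graded pieces of the coordinate ring of $X_0$ are too small for this, and from the Newton-polygon combinatorics one writes down an explicit nonnegative $f_0$ on $X_0$ — a controlled perturbation of a monomial-supported form vanishing along the branches in the Harnack arrangement — which is not a sum of squares and stays that way after multiplication by any nonzero sum of squares of degree below the threshold $\max\bigl\{\rr(X_0),\tfrac{2\genus}{d}\bigr\}$. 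Deforming $f_0$ along the family produces $f\in\Pos_{X,2j}$, and a specialization argument for the cones $\Sos_{X,2k}$ and $\Sos_{X,2j+2k}$ across the degeneration transports the obstruction to the smooth fiber: for each $k$ below the threshold and each nonzero $g\in\Sos_{X,2k}$ one still has $fg\notin\Sos_{X,2j+2k}$. The cleanest certificate is a linear functional $\ell$ lying in the dual cone $\Sos_{X,2j+2k}^{*}$ — a pseudo-moment functional already visible on the toric degeneration — for which the Hermitian form $s\mapsto\ell\bigl(f\,s\,\overline{s}\bigr)$ on $H^0\bigl(\mathcal{O}_X(k)_\CC\bigr)$ is negative definite. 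The main obstacle here is controlling the limit: one must show that the failure of representability is an honest feature of the smooth curve $X$ rather than an artifact of the singular special fiber, which forces a careful flat-family argument together with the explicit $M$-curve construction needed to keep $X$ smooth and totally real while preserving the extremal real topology.
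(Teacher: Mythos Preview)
Your plan diverges from the paper in both halves, and each half has a genuine gap.

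\textbf{Existence.} The paper does \emph{not} construct $g$ directly via divisor classes on $X_\CC$. It argues by contradiction: if no multiplier exists, the pointed cones $\Sos_{X,2j+2k}$ and $f\cdot\Sos_{X,2k}$ are well-separated by a linear functional $\ell$. The convex Bertini theorem (Theorem~\ref{thm:Bertini}) then says that for a Euclidean-open set of degree-$(j+k)$ hypersurface sections $X'\subset X$, the restricted cones on $X'$ remain well-separated; intersecting with the classical Bertini and uniform-position loci, one lands on a reduced zero-dimensional $X'$ where Proposition~\ref{pro:zero} forces $\HF_{X'}(k)+\HF_{X'}(j+k)\le\HF_{X'}(2j+2k)$, contradicting the numerical hypothesis (this is Theorem~\ref{thm:upperBound}, specialized in Corollary~\ref{cor:hilbertRegularity}). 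Your ``Bertini theorem'' is a different statement---about general \emph{totally real} members of a linear series on $X$ realizing a prescribed class---and is not what the paper proves or uses. More seriously, your divisorial criterion presupposes a smooth curve: the first half of the theorem is stated for an arbitrary nondegenerate totally-real curve, possibly singular, where writing the zero scheme of $f$ as $2\mathcal{E}+\mathcal{C}+\overline{\mathcal{C}}$ and arguing in $\operatorname{Pic}(X_\CC)$ is not available without further work. The paper's route sidesteps this entirely by cutting down to points.

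\textbf{Sharpness.} You have the singularity type backwards. The rational Harnack curves used in Proposition~\ref{pro:Harnack} have $\genus$ \emph{solitary points}---ordinary double points whose two branches are complex conjugate, so the node is an \emph{isolated} real point---not crossings with real branches. This is essential: the mechanism (Lemma~\ref{lem:derivation} and Proposition~\ref{pro:lower}) is that a nonnegative $f$ can vanish to order one at an isolated real singular point $p$ with $\der_p f\neq 0$, which then forces any sum-of-squares multiplier $g$ to vanish at $p$ to order at least two; intersection theory on the blowup at the $\genus$ solitary points gives $kd\ge 2\genus$. At a crossing, a nonnegative $f$ cannot vanish to odd order, and the argument collapses. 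The deformation to a smooth curve is also not Viro patchworking but an explicit perturbation $g_1+\varepsilon g_3g_4$ of the defining section, together with a deformation of the separating functional $\ell$ using lower semicontinuity of rank (proof of Theorem~\ref{thm:lower}); the smooth curve need not be an $M$-curve (Remark following that theorem). Your outline of the certificate---a functional in $\Sos_{X,2j+2k}^*$ making the Hermitian form negative definite---is the right endpoint, but the construction that produces it is the one above, not the one you describe.
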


\noindent
Remarkably, the uniform degree bound on the multiplier $g$ is determined by
the complex geometry of the curve $X$. It is independent of both the degree of
the nonnegative element $f$ and the Euclidean topology of the real points in
$X$.

Our approach also applies to higher-dimensional varieties that are
arithmetically Cohen--Macaulay, but it is most effective on certain surfaces.
A subvariety $X \subset \PP^n$ has minimal degree if it is nondegenerate and
$\deg(X) = 1 + \codim(X)$.  Theorem~1.1 in \cite{BSV} establishes that
$\Pos_{X,2} = \Sos_{X,2}$ if and only if $X$ is a totally-real variety of
minimal degree.  Evocatively, this equivalence leads to a characterization of
the varieties over which multipliers of degree zero suffice.  Building on this
framework and generalizing Hilbert's work~\cite{Hilbert2} on ternary forms,
our second result gives degree bounds for sum-of-squares multipliers on
surfaces of minimal degree.

\begin{theorem}
  \label{thm:surface}
  If $X \subset \PP^n$ is a totally-real surface of minimal degree and
  $f \in \Pos_{X,2j}$ is a nonnegative element of positive degree, then there
  is a nonzero $g \in \Sos_{j^2 - j}$ such that $f g \in \Sos_{X,j^2+j}$.
  Conversely, if $X \subset \PP^n$ is a totally-real surface of minimal degree
  and $j \geq 2$, then there exist nonnegative elements $f \in \Pos_{X,2j}$
  such that, for all $k < j-2$ and all nonzero $g \in \Sos_{X,2k}$, we have
  $fg \not\in \Sos_{X,2j+2k}$.
\end{theorem}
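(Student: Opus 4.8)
The plan is to prove the two assertions separately: existence of a multiplier by degenerating the surface problem to a one--dimensional problem over a function field and running a degree--controlled form of Hilbert's argument for ternary forms, and sharpness by exhibiting an obstructing moment functional on a toric model of $X$. For the first part, recall the classification: a surface of minimal degree is projectively equivalent to $\PP^2$, the Veronese surface $\nu_2(\PP^2)\subseteq\PP^5$, a rational normal scroll, or a cone over a rational normal curve, and each of these is toric and arithmetically Cohen--Macaulay. The case $j=1$ of the statement is precisely the equality $\Pos_{X,2}=\Sos_{X,2}$ of \cite{BSV}, and the idea is to bootstrap from it. Each of the four families carries a pencil whose general member is a rational curve of minimal degree; after blowing up at most one base point---a general point of $\PP^2$, or the vertex of a cone, chosen so that $f$ does not vanish there---this pencil presents $X$ as a $\PP^1$--bundle $\widetilde X\to\PP^1$. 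Pulling $f$ back and restricting to the generic fibre yields a binary form $F$ of degree $2j$ over the function field $K$ of the base, and $F$ is positive semidefinite with respect to every ordering of $K$ because the totally real points of $X$ are Zariski dense.

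Since $F$ is positive semidefinite, it is a sum of squares of rational functions (Artin); clearing denominators gives $h^{2}F=\sum_i A_i^2$ with $0\ne h\in K[t,u]$, and globalizing over the base promotes this to an identity $gf=\sum_i q_i^2$ in $R$ in which $g$ is a nonzero square. The entire content of the existence claim is the accompanying degree estimate. One cannot use an arbitrary sum--of--squares representation of $F$; instead one follows Hilbert's construction with adjoint curves through the zeros of $F$, so that the only denominators forced are those coming from the branch locus of the cover recording the partition of the roots of $F$ into complex--conjugate pairs, a locus controlled by the discriminant of $F$. For $X=\PP^2$ this reproves Hilbert's bound that a positive semidefinite ternary form of degree $2j$ has a sum--of--squares multiplier of degree $j^{2}-j$ \cite{Hilbert2}; the real work---and the point I expect to be hardest---is to verify that the same estimate survives \emph{uniformly, measured in $R$--degree}, for scrolls, cones, and the Veronese surface, the last of which requires using that $f$ genuinely lives on $\nu_2(\PP^2)$ rather than treating it as a ternary form of doubled degree. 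Once $\deg g\le j^2-j$, multiplying $g$ by the square of a nonzero element of $R$ of the appropriate degree makes $\deg g=j^2-j$ exactly while $fg$ stays in $\Sos_{X,j^2+j}$, which gives $g\in\Sos_{X,j^2-j}$ as required.

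For the converse, fix $j\ge 2$. Since $X$ is a projectively normal toric surface its hyperplane class corresponds to a lattice polygon $P$: the degree--$2j$ part of $R$ is spanned by the lattice points of $2jP$, and $\Pos_{X,2j}$ consists of the elements nonnegative on the positive real torus. As in the proof of Theorem~\ref{thm:main}, I would deform a rational Harnack curve in the linear system $|2jH|$ into a nonnegative $f\in\Pos_{X,2j}$ whose real locus is a prescribed configuration of nested ovals, and then read off from the combinatorics of $P$ and the interlacing of the ovals, for each $k\le j-3$, a linear functional $\ell$ on $R_{2j+2k}$ that is negative semidefinite as a truncated moment functional on $R_{j+k}$ while $g\mapsto\ell(fg)$ is strictly positive on $\Sos_{X,2k}\setminus\{0\}$; such an $\ell$ witnesses $fg\notin\Sos_{X,2j+2k}$ for every nonzero $g\in\Sos_{X,2k}$, which is the assertion for all $k<j-2$. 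The principal obstacle in this half is that a single $\ell$ must defeat \emph{all} multipliers of degree below $2j-4$ simultaneously; this is arranged by placing the ovals so that the corresponding real points impose independent conditions on the relevant linear systems on the toric surface, exactly as the Harnack points do on curves in the proof of Theorem~\ref{thm:main}.
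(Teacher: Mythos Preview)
Your proposal diverges from the paper in both halves, and each half has a genuine gap.

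\textbf{Existence.} You propose to fibre $X$ over $\PP^1$, pass to the generic fibre, and run Hilbert's adjoint-curve construction over the function field. You then write that ``the real work---and the point I expect to be hardest---is to verify that the same estimate survives uniformly, measured in $R$-degree'' across scrolls, cones, and the Veronese surface. That uniform degree estimate \emph{is} the theorem; leaving it unverified is not a proof but a plan. The paper's route is entirely different and sidesteps this difficulty. Theorem~\ref{thm:totallyreal} shows, via the convex Bertini theorem (Theorem~\ref{thm:Bertini}) and a signature count on points (Proposition~\ref{pro:zero}), that on any arithmetically Cohen--Macaulay totally-real variety a single Hilbert-function inequality guarantees a nonzero $g\in R_{2k}$ with $fg\in\Sos_{X,2j+2k}$. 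Example~\ref{exa:minimalSurface} checks this inequality with $k=j-1$ on any surface of minimal degree, obtaining $g\in\Pos_{X,2j-2}$; induction on $j$, with base case $\Pos_{X,2}=\Sos_{X,2}$, then yields $g\in\Sos_{X,j^2-j}$. No case analysis over the classification, no function-field bookkeeping.

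\textbf{Converse.} Your plan to ``deform a rational Harnack curve in the linear system $|2jH|$ into a nonnegative $f\in\Pos_{X,2j}$ whose real locus is a prescribed configuration of nested ovals'' does not parse: the defining section of a curve with nondegenerate real ovals changes sign across each oval and is therefore not nonnegative, while squaring it lands you in $\Sos_{X,4j}$ trivially. More fundamentally, the obstruction mechanism the paper uses (Lemma~\ref{lem:derivation}: if $f(p)=0$ and $\der_p f\neq 0$ then any multiplier $g$ must satisfy $g(p)=0$ and $\der_p g=0$) requires $p$ to be a \emph{singular} point of the ambient variety, which cannot occur on a smooth surface. The paper therefore does not work directly on $X$. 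It takes a hypersurface section $X'\subset X$ which is a curve, applies the curve lower bound (Theorem~\ref{thm:lower}, via Harnack curves with solitary points on toric surfaces, as in Examples~\ref{exa:Hirzebruch} and~\ref{exa:Veronese}) to obtain $f'\in\Pos_{X',2j}$ with no low-degree multiplier, and then lifts both $f'$ and the separating functional from $X'$ to $X$ by the perturbation argument of Proposition~\ref{pro:lifting}. Your sketch misplaces the Harnack curve---it should \emph{be} the section $X'$, not a device for building $f$ on $X$---and omits the lifting step entirely.
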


\noindent
Unlike curves, Theorem~\ref{thm:surface} shows that the minimum degree of a
sum-of-squares multipler $g$ depends intrinsically on the degree of the
nonnegative element $f$.  The sharpness of the upper or lower bounds on
these surfaces is an intriguing open problem.

Motivated by its relation to Hilbert's Seventeenth Problem, we obtain slightly
better degree bounds when the totally-real surface is $\PP^2$; see
Example~\ref{exa:formsOnPP2} and Example~\ref{exa:strictPP^2} for the details.
Specifically, we re-prove and prove the following two results for ternary
octics:
\begin{itemize}[$\bullet$]
\item for all nonnegative $f \in \Pos_{\PP^2,8}$, there exists a nonzero
  $g \in \Sos_{\PP^2, 4}$ such that $f g \in \Sos_{\PP^2, 12}$; and
\item there exists a nonnegative $f \in \Pos_{\PP^2,8}$ such that, for all
  nonzero $g \in \Sos_{\PP^2, 2}$, we have $f g \not\in \Sos_{\PP^2, 10}$.
\end{itemize}
Together these give the first tight bounds on the degrees of sum-of-squares
multipliers for homogeneous polynomials since Hilbert's 1893
paper~\cite{Hilbert2} in which he proves sharp bounds for ternary sextics. No
other sharp bounds for homogeneous polynomials are known.  For example, the
recent theorem in \cite{Pas} shows that, for quaternary quartics, one can
multiply by sum-of-squares of degree $4$ to obtain a sum of squares, but it is
not known whether quadratic multipliers suffice.

By reinterpreting Theorem~\ref{thm:main} or Theorem~\ref{thm:surface}, we do
obtain degree bounds for certificates of nonnegativity.  A sum-of-squares
multiplier $g$ certifies that the element $f$ is nonnegative at all points
where $g$ does not vanish.  When the complement of this vanishing set is dense
in the Euclidean topology, it follows that the element $f$ is nonnegative.
Changing perspectives, these theorems also generate a finite hierarchy of
approximations to the cone $\Pos_{X,2j}$, namely the sets
$\{ f \in R_{2j} : \text{there exists $g \in \Sos_{X,2k}$ such that
  $fg \in \Sos_{X,2j+2k}$} \}$; compare with Subsection~3.6.1 in \cite{BPT}.
It follows that deciding if an element $f$ belongs to the cone $\Pos_{X,2j}$
is determined by a semidefinite program of known size.

\subsection*{Relationship with prior results}

Our degree bounds, with their uniformity and sharpness, cannot be directly
compared to any established bound on multipliers, except for those on
zero-dimensional schemes in \cite{BGP}.  Most earlier work focuses on general
semi-algebraic sets, where no sharpness results are known, or on affine
curves, where no uniform bounds are possible for singular curves.

The best bound on the degree of a sum-of-squares multiplier on an arbitrary
semi-algebraic set involves a tower of five exponentials; see Theorem~1.5.7 in
\cite{LPR}.  However, Corollary~\ref{cor:degreeOnly} shows that, for a
nondegenerate totally-real projective curve $X \subset \PP^n$ of degree $d$,
every nonnegative form admits a nonzero sum-of-squares multiplier of degree
$2k$ for all $k \geq d - n +1$.  Absent sharp bounds in some larger context, it
impossible to ascertain if this difference in the complexity of the bounds is
just a feature of low-dimensional varieties or part of some more general
phenomenon.

Restricting to curves likewise fails to produce meaningful
comparisons. Corollary~4.15 in \cite{ScheidererC} illustrates that one can
often certify nonnegativity without using a multiplier on an affine curve.
Concentrating on a special type of multiplier, Theorem~4.11 in
\cite{ScheidererP} demonstrates that, on a nonsingular projective curve, any
sufficiently large power of a positive element gives a multiplier; also see
\cite{Rez}.  For nonsingular affine curves, Corollary~4.4 in
\cite{ScheidererS} establishes that there exist uniform degree bounds, even
though the techniques do not yield explicit results. In contrast with
Theorem~\ref{thm:main}, the bounds in these situations either depend on the
nonnegative element $f$ or tend towards positive infinite as the underlying
curve acquires certain singularities.

To identify a close analogue of our work, we must lower the dimension:
Theorems~1.1--1.2 in \cite{BGP} provide uniform degree bounds over a finite
set of points that are tight for quadratic functions on the hypercube.  The
lone additional sharp degree bound on multipliers is, to the best of our
knowledge, Hilbert's original work~\cite{Hilbert2} on ternary sextics.

\subsection*{Main ideas}

The results in this paper arose while exploring the relationship between
convex geometry and algebraic geometry for sums of squares on real varieties.
The two parts of our main theorems are proven independently. The upper bound
on the minimum degree of a sum-of-squares multiplier is derived from a new
Bertini Theorem in convex algebraic geometry and the lower bound is obtained
by deforming rational Harnack curves on toric surfaces.

To prove the first part, we reinterpret the non-existence of a sum-of-squares
multiplier $g \in \Sos_{X,2k}$ as asserting that the convex cones
$\Sos_{X,2j+2k}$ and $f \cdot \Sos_{X,2k}$ intersect only at zero.  If a real
subscheme $X \subseteq \PP^n$ possesses a linear functional separating these
cones, then Theorem~\ref{thm:Bertini} demonstrates that a sufficiently general
hypersurface section of $X$ also does.  In this setting, the phrase
`sufficiently general' means belonging to a nonempty open subset in the
Euclidean topology of the relevant parameter space.  Unexpectedly, this convex
version of Bertini's Theorem relies on our characterization of spectrahedral
cones that have many facets in a neighbourhood of every point; see
Proposition~\ref{pro:pointed}.  Recognizing this dependency is the crucial
insight.  By repeated applications of our Bertini Theorem, we reduce to the
case of points.  Theorem~\ref{thm:upperBound} establishes the degree bound for
the existence of a sum-of-squares multiplier on curves and
Theorem~\ref{thm:totallyreal} gives a higher-dimensional variant on
arithmetically Cohen--Macaulay varieties.

To prove the second part, we show that having a nonnegative element vanish at a
relatively large number of isolated real singularities precludes it from
having a low-degree sum-of-squares multiplier.  As Proposition~\ref{pro:lower}
indicates, the hypotheses needed to actually realize this basic premise are
formidable.  Nonetheless, this transforms the problem into finding enough
curves that satisfy the conditions and maximize the number of isolated real
singularities.  Proposition~\ref{pro:Harnack} confirms that rational singular
Harnack curves on toric surfaces fulfill these requirements.  By perturbing
both the curve $X$ and the nonnegative element $f \in R_{2j}$,
Theorem~\ref{thm:lower} exhibits smooth curves and nonnegative elements
without low-degree sum-of-squares multipliers.  Proposition~\ref{pro:lifting}
then lifts these degree bounds from curves to some surfaces.  Miraculously,
for totally-real projective curves, the degree bounds in
Theorem~\ref{thm:upperBound} and Theorem~\ref{thm:lower} coincide.

\subsection*{Explicit Examples}

Beyond the uniformity, our results also specialize to simple degree bounds in
many interesting situations.  As one might expect, the degree bounds are
straightforward for complete intersections and planar curves; see
Example~\ref{exa:CI} and Example~\ref{exa:planar}.  However, by demonstrating
that our degree bound is sharp for some, but not all, planar curves,
Example~\ref{exa:nonoptimal} and Example~\ref{exa:deltoid} are much more
innovative.  For certain non-planar curves lying on embedded toric surfaces,
Examples~\ref{exa:sharp}--\ref{exa:Veronese} present sharp degree bounds.
These examples also serve as our best justification for the second part of
Theorem~\ref{thm:main}.  It remains an open problem to classify all of the
curves for which the bounds in Theorem~\ref{thm:main} are sharp. Switching to
higher-dimensional arithmetically Cohen--Macaulay varieties,
Example~\ref{exa:quadraticForms} re-establishes that a nonnegative quadratic
form on a totally-real variety of minimal degree is a sum of
squares. Examples~\ref{exa:minimalSurface}--\ref{exa:almost}, which bound
multipliers on surfaces of minimal degree, the projective plane, and surfaces
of almost minimal degree respectively, appear to exhaust all of the
consequential applications to surfaces.  Together
Example~\ref{exa:minimalSurface} and Example~\ref{exa:strictMinimalDegree}
establish Theorem~\ref{thm:surface}. Highlighting the peculiarity of curves,
this pair of examples also illustrates the gap between our upper and lower
bounds on the minimal degree of a multiplier in this case.  Nevertheless,
Example~\ref{exa:strictPP^2} does give our new sharp bound for ternary octics.
Despite being labelled examples, these are essential aspects of the paper.

\section{Many-Faceted Spectrahedral Cones}
\label{sec:many-faceted}

\noindent
This section focuses on convex geometry and properties of spectrahedral cones.
We distinguish certain spectrahedral cones that have an abundance of facets in
the vicinity of every point.  To demonstrate the ubiquity of these cones in
convex algebraic geometry, we show that if a sum-of-squares cone is closed and
contains no lines, then its dual has this structure.

Let $V$ be a finite-dimensional real vector space, let
$S_2 \coloneq \Sym^2(V^*)$ be the vector space of quadratic forms on $V$, and
let $S_2^+ \subseteq S_2$ be the cone of positive-semidefinite quadratic
forms.  The corank of a quadratic form $f \in S_2$ is the dimension of the
kernel $\Ker(f)$ of the associated symmetric matrix.  We endow $\Sym^2(V^*)$
with the metric topology arising from the spectral norm.  Since all norms on a
finite-dimensional vector space induce the same topology, we refer to this
metric topology as the Euclidean topology.  For a quadratic form $g \in S_2$
and a positive real number $\varepsilon$, we write
$B_{\varepsilon}(g) \subset S_2$ for the open ball of radius $\varepsilon$
centered at $g$.  As usual, we equip each subset $W \subseteq S_2$ with the
induced Euclidean topology and the boundary $\partial W$ equals the closure of
$W$ in its affine span without the interior of $W$.

A linear subspace $L \subseteq S_2$ determines a spectrahedral cone
$C \coloneq L \cap S_2^+$.  The faces of the convex set $C$ have a useful algebraic
description.  Specifically, Theorem~1 in \cite{RamanaGoldman} establishes that
the minimal face of $C$ containing a given quadratic form $g \in S_2$ equals
the intersection of $C$ with the linear subspace consisting of $f \in S_2$
such that $\Ker(g) \subseteq \Ker(f)$.  Hence, if the linear subspace $L$
intersects the interior of the cone $S_2^+$, then a quadratic form having
corank $1$ determines a facet, that is an inclusion-maximal proper face.

Our first lemma identifies a special type of spectrahedral cone.  Given a
nonzero $v \in V$, let $T_v \subset S_2$ denote the linear subspace consisting
of the quadratic forms $f \in S_2$ such that $v \in \Ker(f)$.

\begin{lemma}
  \label{lem:wellDefined}
  If the quadratic form $g \in \partial C$ has corank $1$ and
  $\varepsilon > 0$ is sufficiently small, then the map
  $\varphi \colon B_{\varepsilon}(g) \cap \partial C \to \PP(V)$, sending a
  quadratic form $f$ to the linear subspace $\Ker(f)$, is well-defined.
  Moreover, when the defining linear subspace $L$ intersects the linear
  subspace $T_{\varphi(g)}$ transversely, the image of $\varphi$ contains a
  neighbourhood of $\varphi(g)$.
\end{lemma}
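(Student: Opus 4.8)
The plan is to prove the two assertions separately, the first by a semicontinuity argument and the second via the implicit function theorem, with the transversality hypothesis supplying exactly the surjectivity that is needed. Set $n := \dim V$, and regard $\partial C$ as the boundary of $C$ within its linear span $L$; under the present hypotheses $\partial C$ is then the set of positive-semidefinite forms in $L$ of corank at least $1$. For well-definedness, note that the locus $\{ f \in S_2 : \operatorname{corank}(f) \leq 1 \}$ is open in the Euclidean topology, being cut out by the nonvanishing of some $(n-1) \times (n-1)$ minor of the associated symmetric matrix. Since $g$ has corank $1$, there is $\varepsilon_0 > 0$ with $\operatorname{corank}(f) \leq 1$ for every $f \in B_{\varepsilon_0}(g)$; take $\varepsilon \leq \varepsilon_0$. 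If some $f \in B_\varepsilon(g) \cap \partial C$ had corank $0$, then $f$ would be an invertible positive-semidefinite form, hence positive definite, hence an element of the relatively open set $L \cap \operatorname{int}(S_2^+) \subseteq C$; this places $f$ in the relative interior of $C$, contradicting $f \in \partial C$. So every such $f$ has corank exactly $1$, the kernel $\Ker(f)$ is a line, and $\varphi(f) = \Ker(f) \in \PP(V)$ is defined.

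For the second assertion, write $\Ker(g) = \RR v_0$, so $\varphi(g) = [v_0]$, fix a hyperplane $H \subset V$ with $v_0 \notin H$, and use the resulting chart $h \mapsto [v_0 + h]$ around $[v_0]$ in $\PP(V)$. The key linear-algebra point is that the evaluation map $\operatorname{ev} \colon S_2 \to V^*$, $f \mapsto f(v_0, -)$, is surjective with kernel $T_{v_0}$; hence the restriction $\operatorname{ev}|_L$ is surjective if and only if $L + T_{v_0} = S_2$, that is, if and only if $L$ meets $T_{v_0} = T_{\varphi(g)}$ transversely. Now consider the polynomial (hence real-analytic) map $G \colon L \times H \to V^*$ given by $G(f, h) = f(v_0 + h, -)$. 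It satisfies $G(g, 0) = 0$, and its partial derivative in the $f$-direction at $(g, 0)$ is $\operatorname{ev}|_L$, which is surjective by hypothesis. Choosing an $n$-dimensional subspace $L_0 \subseteq L$ on which this derivative restricts to an isomorphism onto $V^*$ and applying the implicit function theorem to $G|_{(g + L_0) \times H}$, we obtain a neighbourhood $H' \ni 0$ in $H$ and a continuous (in fact real-analytic) map $H' \to L$, $h \mapsto f_h$, with $f_0 = g$ and $f_h(v_0 + h, -) = 0$ — that is, $v_0 + h \in \Ker(f_h)$ — for all $h \in H'$.

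It remains to check that each $f_h$ lies in $B_\varepsilon(g) \cap \partial C$ with $\varphi(f_h) = [v_0 + h]$, and this upgrade is the crux of the argument. Because $g$ is positive semidefinite with kernel exactly $\RR v_0$, it induces a positive-definite form on $V / \RR v_0$. For $h \in H'$, the form $f_h$, which kills $v_0 + h$, induces a form $\bar f_h$ on $V / \RR(v_0 + h)$; these fit into a continuously varying family over $H'$ whose value at $h = 0$ is the positive-definite form just named (trivialize the rank $n-1$ bundle with fibre $V / \RR(v_0 + h)$ near $h = 0$). Positive-definiteness being an open condition, after shrinking $H'$ every $\bar f_h$ is positive definite, whence $f_h$ is positive semidefinite with kernel exactly $\RR(v_0 + h)$; in particular $f_h \in L \cap S_2^+ = C$ has corank $1$, so $f_h \in \partial C$, and shrinking $H'$ once more gives $f_h \in B_\varepsilon(g)$ by continuity. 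Therefore $\varphi(f_h) = [v_0 + h]$, so the image of $\varphi$ contains the neighbourhood $\{ [v_0 + h] : h \in H' \}$ of $\varphi(g)$. The main obstacle is precisely this last step: the implicit function theorem only delivers $v_0 + h$ inside $\Ker(f_h)$, and one must independently prevent the corank of $f_h$ from jumping and keep $f_h$ inside the semidefinite cone, which is exactly what the openness of positive-definiteness on the varying quotients $V / \RR(v_0 + h)$ accomplishes; the earlier identification of transversality with surjectivity of $\operatorname{ev}|_L$ is what makes the implicit function theorem available at all.
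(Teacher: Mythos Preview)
Your argument is correct, and it takes a genuinely different route from the paper's.  For the second assertion the paper argues topologically: it shows that the set $U_\varepsilon := \{[v] : T_v \cap L \cap B_\varepsilon(g) \neq \varnothing\}$ is open via a Grassmannian construction, and then proves by contradiction (a limiting sequence with a persistent negative eigenvalue) that for small $\varepsilon$ every form in $L \cap T_v \cap B_\varepsilon(g)$ is actually positive-semidefinite.  You instead use the implicit function theorem to produce directly a family $h \mapsto f_h$ with $v_0+h \in \Ker(f_h)$, and then control the signature by passing to the continuously varying quotients $V/\RR(v_0+h)$ and invoking openness of positive-definiteness there.  Your approach is more constructive and makes the role of transversality very transparent (it is exactly surjectivity of $\operatorname{ev}|_L$, hence exactly what the implicit function theorem needs); the paper's approach avoids local trivializations of a quotient bundle at the cost of a less explicit sequence argument.

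One small point deserves a sentence of justification.  You assert at the outset that ``under the present hypotheses $\partial C$ is then the set of positive-semidefinite forms in $L$ of corank at least $1$,'' and later use the direction ``corank $1 \Rightarrow f_h \in \partial C$.''  Your first paragraph only proves the other inclusion (corank $0$ on $\partial C$ is impossible).  The missing direction amounts to showing $\operatorname{relint}(C) = L \cap \operatorname{int}(S_2^+)$, which requires $L \cap \operatorname{int}(S_2^+) \neq \varnothing$; this in turn follows from the existence of a corank-$1$ point $g \in \partial C$ (otherwise $C$ would lie in a proper face of $S_2^+$ and every boundary point would have corank $\geq 2$).  Alternatively, you can bypass this entirely by noting that for $h \neq 0$ the direction $-g \in L$ satisfies $(-g)(v_0+h,v_0+h) = -g(h,h) < 0$, so $f_h + t(-g)$ leaves $S_2^+$ for small $t>0$, forcing $f_h \in \partial C$.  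Either way the fix is one line and the overall strategy is sound.
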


\begin{proof}
  The existence of $g \in \partial C$ having corank $1$ implies that the
  defining linear subspace $L$ meets the interior of $S_2^+$.  If not, then
  $C = L \cap S_2^+$ would be entirely contained in a face of $S_2^+$, and all
  of its boundary points would have corank at least $2$.

  We claim that $\partial C = L \cap \partial S_2^+$.  Since every
  neighbourhood of a point in $\partial C$ contains at least one point in
  $S_2^+$ and at least one point not in $S_2^+$, we have
  $\partial C \subseteq L \cap \partial S_2^+$.  On the other hand, suppose
  that $f \in L \cap \partial S_2^+$ belongs to the relative interior of $C$.
  Since $f \in \partial S_2^+$, there exists a nonzero linear functional
  $\ell \in S_2^*$ that is nonnegative on $S_2^+$ and satisfies $\ell(f) = 0$.
  As $f$ lies in the relative interior of $C$, it follows that $\ell$ vanishes
  identically on $C$ and the cone $C$ is contained in
  $\ell^{-1}(0) \cap S_2^+ \subseteq \partial S_2^+$.  However, this is absurd
  because $L$ intersects the interior of $S_2^+$, so we obtain
  $\partial C = L\cap \partial S_2^+$.

  Since the eigenvalues of a matrix are continuous functions of the entries of
  the matrix, we see that, for a sufficiently small $\varepsilon > 0$, each
  point in $B_{\varepsilon}(g) \cap \partial S_2^+$ has corank $1$.  Hence,
  for each quadratic form $f \in B_{\varepsilon}(g) \cap \partial C$, the
  linear subspace $\Ker(f)$ has dimension one.  Therefore, the map
  $\varphi \colon B_{\varepsilon}(g) \cap \partial C \to \PP(V)$ is
  well-defined.

  To prove the second part of the lemma, let $m \coloneq \dim(V)$.  By definition,
  we have $T_{\lambda v} = T_v$ for any nonzero $\lambda \in \RR$.  Moreover,
  requiring that a nonzero vector $v \in V$ belongs to the kernel of a
  symmetric $(m \times m)$-matrix imposes $m$ independent linear conditions on
  the entries of the matrix, so we have $\codim T_v = m$.  The hypothesis that
  the linear subspaces $L$ and $T_{\varphi(g)}$ meet transversely means that
  $S_2 = L + T_{\varphi(g)}$, which implies that
  $\dim S_2 = \dim (L) + \dim (T_{\varphi(g)}) - \dim (L \cap T_{\varphi(g)})$
  and $\dim(L \cap T_{\varphi(g)}) = \dim(L) - m$.  It follows that, for all
  $v$ in an open neighbourhood $W \subseteq \PP(V)$ of $\varphi(g)$ in the
  Euclidean topology, we have $\dim(L \cap T_{v}) = \dim(L) - m$.

  Consider the set
  $U_{\varepsilon} \coloneq \bigl\{ [v] \in W : T_v \cap L \cap B_\epsilon(g)
  \neq \varnothing \bigr\}$.  Let $G \coloneq \Grass\bigl(\dim(L)-m, L \bigr)$
  be the Grassmannian of linear subspaces in $L$ with codimension $m$
  considered as a real manifold, and let $\pi_1$, $\pi_2$ be the canonical
  projection maps from the universal family in $L \times G$ onto the factors.
  This universal family is simply the subvariety of the product whose fibre
  over a given point in $G$ is the corresponding codimension-$m$ linear
  subspace itself.  The previous paragraph shows that the map $\psi : W \to G$
  sending $[v]$ to the linear subspace $L \cap T_v$ is well-defined and
  continuous.  Since $\pi_1$ is a continuous map and $\pi_2$ is an open map,
  we see that
  $U_{\varepsilon} = \psi^{-1} \bigl( \pi_2 \bigl( \pi_1^{-1}(L \cap
  B_{\varepsilon}(g) \bigr) \bigr)$ is an open subset in $W$ considered as a
  real manifold.

  Finally, if the image of $\varphi$ does not contain $U_\varepsilon$ for all
  sufficiently small $\varepsilon > 0$, then there is a sequence of increasing
  positive integers $r_i$, a sequence of nonzero vectors $v_i \in V$, and a
  sequence of quadratic forms $f_i \in S_2 \setminus S_2^+$ such that the
  $[v_i] \in \PP(V)$ converge to $\varphi(g)$ as $i \to \infty$ and
  $f_i \in L \cap T_{v_i} \cap B_{1/r_i}(g)$ for each $i \in \NN$.  Thus, the
  quadratic forms $f_i$ converge to $g$ as $i \to \infty$.  However, for
  sufficiently large $i \in \NN$, the symmetric matrix corresponding to $f_i$
  has a negative eigenvalue and corank $1$ because
  $f_i \in S_2 \setminus S_2^+$ and $f_i \in B_{1/r_i}(g)$. Hence, the limit
  of the $f_i$ cannot be both positive-semidefinite and have corank $1$.  We
  conclude that, for sufficiently small $\varepsilon$, the elements in
  $L \cap T_v \cap B_{\varepsilon}(g)$ for all $[v] \in U_\varepsilon$ are
  positive-semidefinite.  Therefore, the image of $\varphi$ contains
  $U_\varepsilon$ for sufficiently small $\varepsilon$.
\end{proof}

Building on Lemma~\ref{lem:wellDefined}, we introduce the following class of
spectrahedral cones.  This definition guarantees that, both globally and
locally, the spectrahedral cone has numerous facets.

\begin{definition}
  \label{def:many-faceted}
  A spectrahedral cone $C = L \cap S_2^+$ is \define{many-faceted} if the
  points with corank $1$ form a dense subset of $\partial C$ and, for all
  $g \in \partial C$ with corank $1$ and all sufficiently small
  $\varepsilon > 0$, the image of
  $\varphi \colon B_{\varepsilon}(g) \cap \partial C \to \PP(V)$ contains a
  neighbourhood of $\varphi(g)$.
\end{definition}

\noindent
Being many-faceted is an extrinsic property; it depends on the presentation of
the spectrahedral cone.  For instance, if $C = L \cap S_2^+$ is many-faceted,
then we must have $\dim(L) \geq \dim(V) + 1$.

Two modest examples help illuminate this definition.  

\begin{example}[A spectrahedral cone that is not many-faceted]
  \label{exa:flatFace}
  Let $V \coloneq \RR^3$ and $S_2 \coloneq \RR[x_0,x_1,x_2]_2^{}$.  For the
  spectrahedral cone $C \coloneq L \cap S_2^+$ given by the linear subspace
  $L \coloneq \Span(x_0^2, x_0^{} x_2^{}, x_1^2, x_2^2) \subset S_2$, we have
  $C = \{ \alpha x_0^2 + 2 \beta x_0^{} x_2^{} + \gamma x_1^2 + \delta x_2^2 :
  \text{$\alpha \geq 0$, $\gamma \geq 0$, and
    $\alpha \delta - \beta^2 \geq 0$} \}$ and the associated symmetric
  matrices have the form 
  \[
    \begin{bmatrix}
    \alpha & 0 & \beta \\
    0 & \gamma & 0 \\
    \beta & 0 & \delta \\
    \end{bmatrix} \, .
  \]
  The relative interior of the face given by $\gamma = 0$ is open in the
  boundary $\partial C$ and consists of points with corank $1$ because the
  kernel of each quadratic form in the relative interior of this face is equal
  to
  $\Span \bigl( [ \begin{smallmatrix} 0 & 1 & 0 \end{smallmatrix}
  ]^{\transpose} \bigr)$.  However, the image of the map $\varphi$ is a single
  point in $\PP(V)$, so it does not contain an open subset. Thus, this
  spectrahedral cone is not many-faceted.  \hfill $\diamond$
\end{example}

\begin{example}[A spectrahedral cone that is many-faceted]
  As in Example~\ref{exa:flatFace}, let $V \coloneq \RR^3$ and
  $S_2 \coloneq \RR[x_0,x_1,x_2]_2^{}$.  For $C \coloneq L \cap S_2^+$ defined
  by
  $L \coloneq \Span(x_0^2 + x_1^2 + x_2^2, x_0^{} x_1^{}, x_0^{} x_2^{},
  x_1^{} x_2^{}) \subset S_2$, it follows that
  \[
    C = \left\{ \alpha x_0^2 + 2 \beta x_0^{} x_1^{} + 2 \gamma x_0^{} x_2^{}
      + \alpha x_1^2 + 2 \delta x_1^{} x_2^{} + \alpha x_2^2 :
    \begin{array}{c}
      \text{$\alpha \geq 0$, $\alpha^2-\beta^2 \geq 0$,} \\ 
     \text{$\alpha^3 - \alpha \beta^2 - \alpha \gamma^2  -
      \alpha \delta^2 + 2 \beta \gamma \delta \geq 0$}
    \end{array}
  \right\}
  \]
  and the associated symmetric matrices have the form
  \[
    \begin{bmatrix}
      \alpha & \beta & \gamma \\
      \beta & \alpha & \delta \\
      \gamma & \delta & \alpha \\
    \end{bmatrix} \, . 
  \]  
  The algebraic boundary of the section of this cone determined by setting
  $\alpha = 1$ equals the Cayley cubic surface defined by the affine equation
  $1 - \beta^2 - \gamma^2 - \delta^2 +2 \beta \gamma \delta$; see
  Subsection~5.2.2 in \cite{BPT}.  From the well-known image of the boundary
  surface (see Figure~\ref{fig:samosa}), which is affectionately referred to
  as `The Samosa', we observe that the cone is many-faceted.
  \begin{figure}[!ht]
    \includegraphics[width=2.0cm]{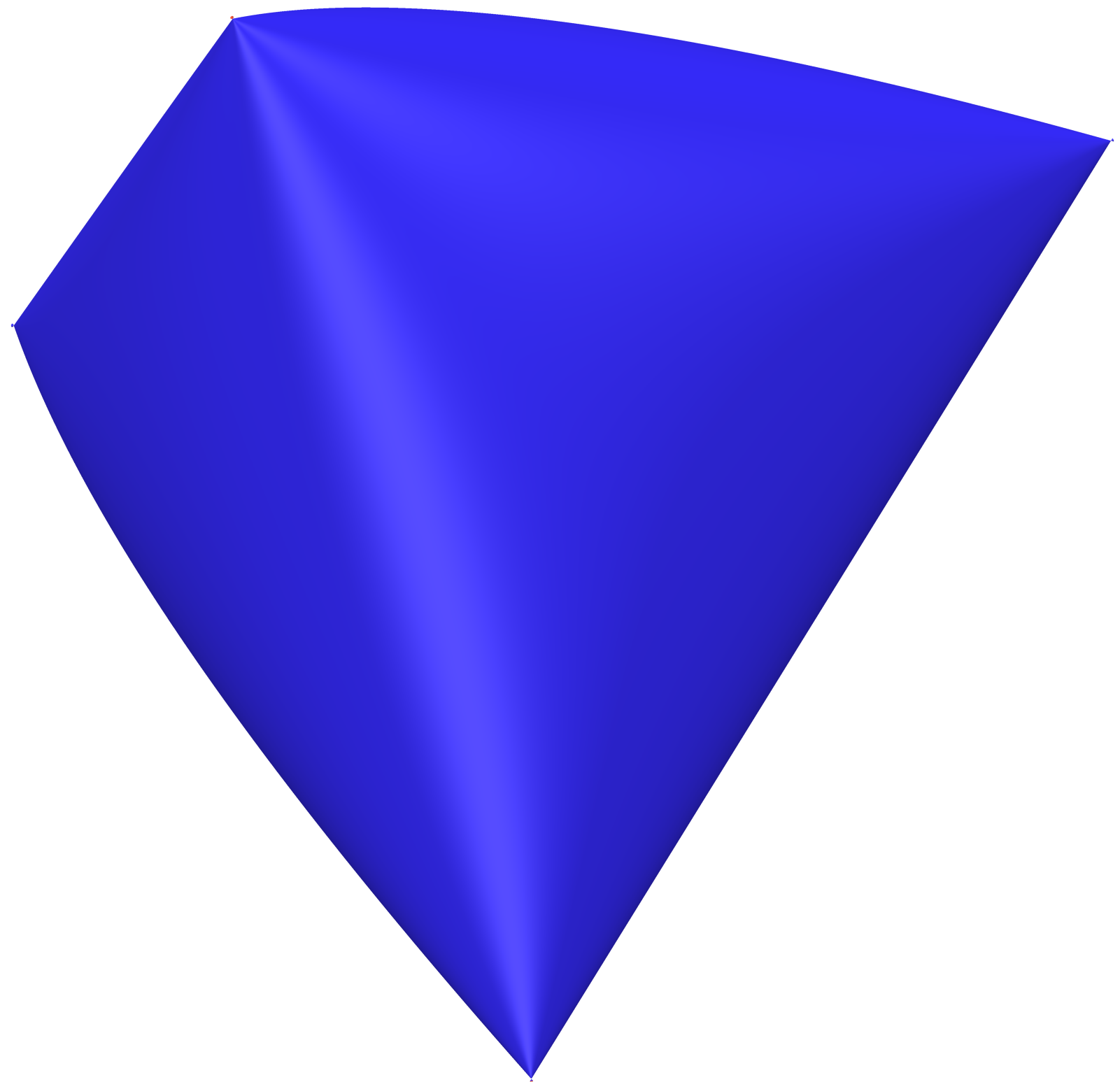}
    \hspace*{3cm}
    \includegraphics[width=2.0cm]{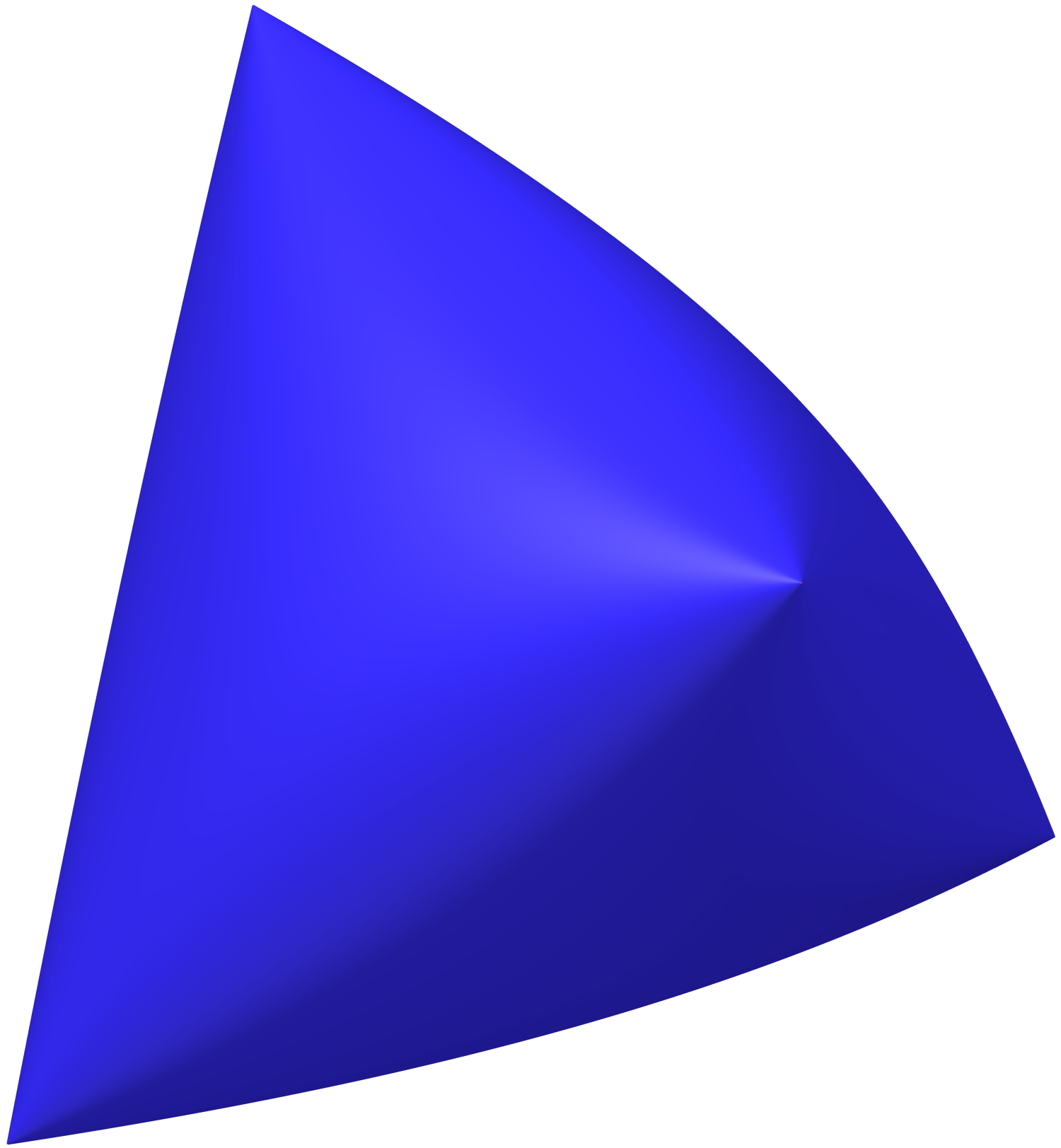}
    \caption{Two perspectives of `The Samosa'}
    \label{fig:samosa}
  \end{figure}
  For the quadratic form
  $g \coloneq x_0^2 + (x_1^{} - x_2^{})^2 \in \partial C$, we have
  $\varphi(g) = [ 0 : 1 : 1 ] \in \PP(V)$ and
  \[
    \dim(L \cap T_{\varphi(g)}) = \dim \Span \bigl( x_0^2 + (x_1^{} -
    x_2^{})^2, x_0^{} x_1^{} - x_0^{} x_2^{} \bigr) = 2 > 1 = \dim(L) - \dim(V)
    \, .
  \]
  This shows that there exist many-faceted spectrahedral cones which do not
  arise via Lemma~\ref{lem:wellDefined} . \hfill $\diamond$
\end{example}



To realize such many-faceted cones within convex algebraic geometry, consider
a real projective subscheme $X \subseteq \PP^n = \Proj(S)$ where
$S \coloneq \RR[x_0,x_1,\dotsc,x_n]$.  If $I_X$ is the saturated homogeneous ideal
defining $X$, then the $\ZZ$-graded coordinate ring of $X$ is $R \coloneq S/I_X$.
For each $j \in \ZZ$, the graded component $R_j$ of degree $j$ is a
finite-dimensional real vector space, and we set
\[
\Sos_{X,2j} \coloneq \bigl\{ f \in R_{2j} : \text{there exist
  $g_0, g_1, \dotsc, g_s \in R_{j}$ such that
  $f = g_0^2 + g_1^2 + \dotsb + g_s^2$} \bigr\} \, .
\]
Since a nonnegative real number has a square root in $\RR$, we see that
$\Sos_{X,2j}$ is a convex cone in $R_{2j}$.  The map
$\sigma_j \colon \Sym^2(R_j) \to R_{2j}$, induced by multiplication, is
surjective.  It follows that the cone $\Sos_{X,2j}$ is also full-dimensional
because the second Veronese embedding of $\PP^n$ is nondegenerate.  Moreover,
the dual map $\sigma_j^* \colon R_{2j}^* \to \Sym^2(R_j^*)$ is injective and,
for all $\ell \in R_{2j}^*$, the symmetric form
$\sigma_j^*(\ell) \colon R_j \otimes_\RR R_j \to \RR$ is given explicitly by
$g_1 \otimes g_2 \mapsto \ell(g_1g_2)$.

The subsequent proposition consolidates a few fundamental properties of this
cone and proves that many-faceted spectrahedral cones are common in convex
algebraic geometry.  A cone in a real vector space is \define{pointed} if it
is both closed in the Euclidean topology and contains no lines.

\begin{proposition}
  \label{pro:pointed}
  Fix $j \in \NN$.  If $X \subseteq \PP^n$ is a real projective subscheme with
  $\ZZ$-graded coordinate ring $R$ such that the map
  $\eta_g \colon R_j \to R_{2j}$ defined by $\eta_g(f) = fg$ is injective for
  all nonzero $g \in R_j$, then the following are equivalent.
  \begin{enumerate}[\upshape (a)]
  \item The cone $\Sos_{X,2j}$ is pointed.
  \item No nontrivial sum of squares of forms of degree $j$ equals zero.
  \item The points of corank $1$ form a dense subset of
    $\partial \Sos_{X,2j}^*$ in the Euclidean topology.
  \item The dual $\Sos_{X,2j}^*$ is a many-faceted spectrahedral cone.
  \end{enumerate}
\end{proposition}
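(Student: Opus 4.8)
The plan is to establish the cycle of implications (a) $\Rightarrow$ (b) $\Rightarrow$ (c) $\Rightarrow$ (d) $\Rightarrow$ (a), using the dual description of faces of spectrahedral cones recalled before Lemma~\ref{lem:wellDefined} together with Lemma~\ref{lem:wellDefined} itself for the passage to the many-faceted property. Throughout, write $S_2 := \Sym^2(R_j^*)$, identify $\Sos_{X,2j}^*$ with $\sigma_j^*(R_{2j}^*) \cap S_2^+$ via the injective map $\sigma_j^*$, so that $\Sos_{X,2j}^*$ is literally a spectrahedral cone $L \cap S_2^+$ with $L := \sigma_j^*(R_{2j}^*)$; note that $\dim L = \dim R_{2j}$ since $\sigma_j$ is surjective.

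For (a) $\Rightarrow$ (b): if some nontrivial $g_1^2 + \dots + g_s^2 = 0$ in $R_{2j}$ with not all $g_i$ zero, then the line $\RR \cdot (g_1^2 + \dots + g_s^2) = \{0\}$ is degenerate, but the real content is that a nonzero sum of squares equal to zero forces the cone $\Sos_{X,2j}$ to fail pointedness: pick any nonzero square $g_i^2$ occurring; then $g_i^2$ and $-g_i^2 = \sum_{k \ne i} g_k^2$ both lie in $\Sos_{X,2j}$, so the cone contains the line through $g_i^2$ — contradicting (a). For (b) $\Rightarrow$ (a): closedness of $\Sos_{X,2j}$ follows because the hypothesis (b) says precisely that $\Sos_{X,2j}$ is the image under $\sigma_j$ of the closed cone $S_2^+$ intersected with the requirement that no cancellation occurs; more carefully, $\Sos_{X,2j} = \sigma_j(S_2^+)$ is the continuous image of $S_2^+$, and one shows it is closed by the standard argument (a sum of squares of bounded coefficient vectors has a convergent subsequence once (b) rules out escape to infinity through the kernel of $\sigma_j$) — here the injectivity hypothesis on $\eta_g$ is what controls $\Ker \sigma_j$ restricted to rank-one forms. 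Line-freeness: if $f$ and $-f$ are both sums of squares, adding the two representations gives a nontrivial sum of squares equal to zero unless $f = 0$, contradicting (b).

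For (b) $\Leftrightarrow$ (c): a point $Q \in \partial \Sos_{X,2j}^*$ corresponds to a positive-semidefinite form on $R_j$, and its corank is $\dim \Ker(\sigma_j^*(Q))$. The set of corank-$\geq 2$ points is the intersection of $\partial \Sos_{X,2j}^*$ with a determinantal (real-algebraic) subset; it is nowhere dense in $\partial \Sos_{X,2j}^*$ precisely when $\Sos_{X,2j}^*$ is not contained in the locus of forms with a common kernel vector, i.e. precisely when $\Sos_{X,2j}$ is full-dimensional — which was already observed — AND $\Sos_{X,2j}^*$ has nonempty interior, i.e. $\Sos_{X,2j}$ is pointed. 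So I will argue: (a) (equivalently (b)) holds iff $\Sos_{X,2j}^*$ is full-dimensional in $L$, iff its boundary is a real hypersurface in $L$ whose generic point has corank $1$ (using that the generic positive-semidefinite form on the boundary of a full-dimensional spectrahedral cone has corank $1$, a standard fact about $S_2^+$), which is (c). The injectivity of $\eta_g$ enters to guarantee that the corank-$1$ stratum is genuinely there and not forced up to higher corank by syzygies.

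For (c) $\Rightarrow$ (d): I must verify the two clauses of Definition~\ref{def:many-faceted} for $C := \Sos_{X,2j}^*$. The density clause is exactly (c). For the local-surjectivity clause, fix $g \in \partial C$ of corank $1$ with kernel line $[v]$; by Lemma~\ref{lem:wellDefined} it suffices to check that $L$ meets $T_v$ transversely in $S_2$. This is the one step requiring the injectivity hypothesis on $\eta_g$ in an essential way: transversality $S_2 = L + T_v$ is equivalent (taking orthogonal complements) to $L^\perp \cap T_v^\perp = 0$ in $S_2^*$, and $T_v^\perp$ is spanned by the rank-one forms $\ell \mapsto \ell(v)^2$, i.e. evaluations $vv^{\transpose}$; unwinding, $L^\perp = \Ker \sigma_j$, so the condition becomes: the rank-one tensor $v \otimes v$ does not lie in $\Ker \sigma_j$, equivalently $v^2 \ne 0$ in $R_{2j}$ for the element $v \in R_j$ representing the kernel direction. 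But $v^2 = \eta_v(v)$, and $\eta_v$ is injective with $v \ne 0$, so $v^2 \ne 0$. This gives transversality, hence (d). For (d) $\Rightarrow$ (c): the density clause of many-faceted is literally (c).

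\medskip
I expect the \textbf{main obstacle} to be the closedness half of (b) $\Rightarrow$ (a): showing $\Sos_{X,2j} = \sigma_j(S_2^+)$ is closed. The usual proof fixes a Carathéodory-type bound on the number of squares, parametrizes by tuples in $R_j$, and extracts a convergent subsequence; the delicate point is ruling out the scenario where individual summands $g_i$ blow up while the sum stays bounded, which is exactly prevented by hypothesis (b) (no nontrivial sum of squares vanishes) — one must show that the "directions of escape" would produce such a vanishing sum. I would handle this by a standard argument: if $\sum g_i^{(m)2} \to h$ with $\max_i \|g_i^{(m)}\| \to \infty$, rescale by $\lambda_m := \max_i \|g_i^{(m)}\|$ to get $\sum (g_i^{(m)}/\lambda_m)^2 \to 0$ with the rescaled tuple on the unit sphere; passing to a limit yields a nontrivial sum of squares equal to $0$, contradicting (b). The other steps are routine manipulations with the $\sigma_j$/$\sigma_j^*$ duality, the Ramana–Goldman face description, and Lemma~\ref{lem:wellDefined}.
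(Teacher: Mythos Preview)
Your argument for (a)/(b) $\Rightarrow$ (c) has a genuine gap. You assert as a ``standard fact about $S_2^+$'' that the generic boundary point of a full-dimensional spectrahedral cone $L \cap S_2^+$ has corank $1$. This is false: take $V = \RR^3$ and $L = \Span(I,\, e_1 e_1^{\transpose})$, so $L \cap S_2^+ = \{\operatorname{diag}(a+b,a,a) : a \geq 0,\ a+b \geq 0\}$; one of the two boundary rays consists entirely of corank-$2$ matrices. The paper does not appeal to any such general fact. Instead it uses a theorem of Schneider (full-dimensional closed convex bodies have a dense set of boundary points whose normal cone is a single ray), and then argues that at such an $\ell$ the kernel of $\sigma_j^*(\ell)$ must be one-dimensional: if $f,g \in R_j$ are nonzero kernel elements, then $f^2$ and $g^2$ are nonzero (by injectivity of $\eta_f$, $\eta_g$) and both lie in the normal cone, so $f^2 = \lambda g^2$ for some $\lambda > 0$, whence $(f-\sqrt{\lambda}g)(f+\sqrt{\lambda}g) = 0$ and again injectivity forces $f,g$ linearly dependent. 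This is where the multiplication hypothesis really enters for (c); your phrase ``not forced up to higher corank by syzygies'' is pointing in the right direction but is not an argument.

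There is also a smaller error in your transversality step for (c) $\Rightarrow$ (d). The annihilator $T_v^\perp \subset \Sym^2(R_j)$ is not spanned by the single element $v \otimes v$; it is the $m$-dimensional space $\{v \cdot w : w \in R_j\}$ where $v \cdot w = \tfrac{1}{2}(v \otimes w + w \otimes v)$. Consequently $L^\perp \cap T_v^\perp = 0$ is equivalent to $\sigma_j(v \cdot w) = vw \neq 0$ for all nonzero $w$, i.e.\ to the full injectivity of $\eta_v$, not merely to $v^2 \neq 0$. The hypothesis gives you exactly this, so the conclusion survives, but the condition you wrote down ($v^2 \neq 0$) is strictly weaker than what is needed. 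The paper reaches the same transversality by computing $\codim_L(L \cap T_v) = \dim \langle v \rangle_{2j} = \dim R_j$ directly.
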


\begin{proof}
  (a) $\Rightarrow$ (b): If some nontrivial sum of squares equals zero, then
  there exist $g_0, g_1, \dotsc, g_s \in R_j$ satisfying
  $g_0^2 + g_1^2 + \dotsb + g_s^2 = 0$.  We have $s > 0$ because the map
  $\eta_g$ is injective for all nonzero $g \in R_j$.  Since
  $g_0^2 = - (g_1^2+ g_2^2 + \dotsb + g_s^2)$, it follows that
  $\lambda g_0^2 \in \Sos_{X,2j}$ for all $\lambda \in \RR$ which contradicts
  the assumption that $\Sos_{X,2j}$ contains no lines.

  (b) $\Rightarrow$ (a): Fix an inner product on the real vector space $R_j$
  and let $g \mapsto \norm{g}$ denote the associated norm.  The spherical
  section $K \coloneq \{ g^2 \in R_{2j} : \text{$g \in R_j$ satisfies
    $\norm{g} = 1$} \}$ is compact because it is the continuous image of a
  compact set.  Moreover, the convex hull of $K$ does not contain $0$ because
  no nontrivial sum of squares equals zero.  Since $\Sos_{X,2j}$ is the
  conical hull of $K$, the cone $\Sos_{X,2j}$ is closed.  If $\Sos_{X,2j}$
  contains a line, then there exists a nonzero $f \in R_{2j}$ such that both
  $f$ and $-f$ lie in $\Sos_{X,2j}$.  However, it follows that the nontrivial
  sum $f + (-f)$ equals zero, which contradicts (b).

  (a) $\Rightarrow$ (c): Since $\Sos_{X,2j}$ is a pointed full-dimensional
  cone, its dual $\Sos_{X,2j}^*$ is also a pointed full-dimensional cone.  As
  a consequence, Theorem~2.2.4 in \cite{Schneider} implies that the linear
  functionals $\ell\in \Sigma_{X,2j}^*$ whose normal cone is a single ray form
  a dense subset of $\partial \Sigma_{X,2j}^*$.  We claim that every such
  linear functional $\ell$ has corank one.  If $f, g \in R_j$ are two nonzero
  elements lying in the kernel of $\sigma_j^*(\ell)$, then $f^2$ and $g^2$ are
  nonzero elements of the normal cone of $\Sigma_{X,2j}^*$ at $\ell$.  Because
  this normal cone is a ray, there exists a positive $\lambda \in \RR$ such
  that $f^2 = \lambda g^2$.  Hence, we have
  $(f + \smash{\sqrt{\lambda}} g)(f - \smash{\sqrt{\lambda}} g) = 0$ in
  $R_{2j}$. By injectivity of multiplication maps, we conclude that $f$ and
  $g$ are linearly dependent, so $\ell$ has corank $1$ and (c) holds.

  (c) $\Rightarrow$ (a): If $\Sos_{X,2j}$ is not closed, then the `(b)
  $\Rightarrow$ (a)' step shows that there is a nontrivial sum of squares from
  $R_j$ equal to zero in $R_{2j}$.  In this case, the `(a) $\Rightarrow$ (b)'
  step demonstrates that $\Sos_{X,2j}$ contains a line.  Now, if $\Sos_{X,2j}$
  contains a line, then its dual $\Sos_{X,2j}^*$ is not full-dimensional.
  Since the dual map $\sigma_j^* \colon R_{2j}^* \to \Sym^2(R_j^*)$ is
  injective, the linear subspace $\sigma_j^*(R_{2j}^*)$ does not intersect the
  interior of the cone $S_2^+$ consisting of positive-semidefinite forms in
  $\Sym^2(R_j^*)$.  Hence, the image $\sigma_j^*(\Sos_{X,2j}^*)$ consists of
  symmetric forms of corank at least $1$.  It follows that the boundary
  consists of symmetric forms of corank at least $2$, which contradicts (c).

  (a) $\Leftrightarrow$ (d): Let $V \coloneq R_j$ and let
  $S_2 \coloneq \Sym^2(R_j^*)$.  For a linear functional
  $\ell \in \Sos_{X,2j}^*$, we have $\ell(g^2) \geq 0$ for all $g \in R_j$, so
  the symmetric form $\sigma_j^*(\ell)$ is positive-semidefinite.  Conversely,
  if $\sigma_j^*(\ell)$ is positive-semidefinite symmetric form, then we have
  $\ell(g^2) \geq 0$ for all $g \in R_j$.  It follows that
  $\ell( g_0^2 + g_1^2 + \dotsb + g_s^2) = \ell(g_0^2) + \ell(g_1^2) + \dotsb
  + \ell(g_s)^2 \geq 0$ for $g_0, g_1, \dotsc, g_s \in R_j$ and
  $\ell \in \Sos_{X,2j}^*$.  Hence, the map $\sigma_j^*$ identifies the dual
  $\Sos_{X,2j}^*$ with the spectrahedral cone determined by the linear
  subspace $L \coloneq \sigma_j^*(R_{2j}^*)$ in $S_2 = \Sym^2(R_{j}^*)$;
  compare with Lemma~2.1 in \cite{BSV}. Given a nonzero $f \in V$, let
  $T_f \subset S_2$ be the linear subspace consisting of the symmetric forms
  $h \in S_2$ such that $f \in \Ker(h)$.  As in the proof of
  Lemma~\ref{lem:wellDefined}, we have $\codim T_f = \dim V$.  The map
  $\sigma_j^*$ identifies the linear subspace $L \cap T_f$ with the set of
  linear functionals $\ell \in R_{2j}^*$ such that $\ell(fg) = 0$ for all
  $g \in V$.  If $\langle f \rangle$ denotes the ideal in $R$ generated by
  $f$, then it follows that the codimension of $L \cap T_f$ in $L$ equals the
  dimension of $\langle f \rangle_{2j}$.  By hypothesis, the map
  $\eta_f \colon R_j \to R_{2j}$ is injective, so we obtain
  $\dim \langle f \rangle_{2j} = \dim R_{j} = \dim V$.  Hence, we have
  $\dim L + \dim T_f - \dim L \cap T_f = \dim S_2$ and the linear subspaces
  $L$ and $T_f$ meet transversely for all nonzero $f \in V$.  If
  $g \in \partial \Sigma_{X,2j}^*$ has corank $1$ and $\varepsilon > 0$ is
  sufficiently small, then Lemma~\ref{lem:wellDefined} establishes that the
  image of
  $\varphi \colon B_{\varepsilon}(g) \cap \partial \Sos_{X,2j}^* \to \PP(V)$
  contains a neighbourhood of $\varphi(g)$.  Since `(a) $\Leftrightarrow$ (c)'
  establishes that $\Sos_{X,2j}$ is pointed if and only if the points of
  corank $1$ form a dense subset of $\partial \Sos_{X,2j}^*$ in the Euclidean
  topology, we conclude that $\Sigma_{X,2j}$ is pointed if and only if its
  dual $\Sos_{X,2j}^*$ is a many-faceted spectrahedral cone.
\end{proof}

\begin{remark}
  The first condition in Proposition~\ref{pro:pointed} may be rephrased. 
  A cone $C$ is salient if it does not contain an opposite pair of nonzero
  vectors, that is $(-C) \cap C \subseteq \{0\}$.  In other words, a cone is
  salient if and only if it contains no lines.  Hence, a cone is pointed if it
  is both closed and salient.
\end{remark}

\begin{remark}
  \label{rem:nonclosed}
  If $\Sos_{X,2j}$ is not closed, then the `(c) $\Rightarrow$ (a)' step proves
  that $\Sos_{X,2j}$ contains a line.
\end{remark}

We end this section with special cases of Proposition~\ref{pro:pointed}.  A
subscheme $X \subseteq \PP^n$ is a real projective variety if it is a
geometrically integral projective scheme over $\RR$.  Moreover, a real variety
$X$ is totally real if the set $X(\RR)$ of real points is Zariski dense.  The
most important application of Proposition~\ref{pro:pointed} is the following
corollary.

\begin{corollary}
  \label{cor:variety}
  Let $X \subseteq \PP^n$ be a real projective variety. The cone $\Sos_{X,2j}$
  is pointed if and only if its dual $\Sos_{X,2j}^*$ is a many-faceted
  spectrahedral cone.  Furthermore, the cones $\Sos_{X,2j}^*$ are many-faceted
  for all $j \in \NN$ if and only if $X$ is totally real.
\end{corollary}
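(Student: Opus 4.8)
The plan is to deduce Corollary~\ref{cor:variety} from Proposition~\ref{pro:pointed} in two stages: first verify that the injectivity hypothesis $\eta_g\colon R_j \to R_{2j}$, $\eta_g(f)=fg$, holds automatically when $X$ is a variety, and second characterize pointedness of $\Sos_{X,2j}$ in terms of total reality. For the first stage, observe that if $X$ is geometrically integral then $R = S/I_X$ is a domain, so multiplication by any nonzero homogeneous $g$ is injective on all of $R$, in particular on $R_j$. Hence the hypotheses of Proposition~\ref{pro:pointed} are met for every $j\in\NN$, and the equivalence (a)~$\Leftrightarrow$~(d) immediately gives the first sentence of the corollary: $\Sos_{X,2j}$ is pointed if and only if $\Sos_{X,2j}^*$ is a many-faceted spectrahedral cone.

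For the second stage, I would use the equivalence (a)~$\Leftrightarrow$~(b): since $R$ is a domain, $\Sos_{X,2j}$ is pointed if and only if no nontrivial sum of squares $g_0^2+\dotsb+g_s^2$ with $g_i\in R_j$ vanishes in $R_{2j}$. So it suffices to show that, for all $j\in\NN$, no nontrivial sum of squares in $R_j$ equals zero if and only if $X$ is totally real. One direction is the easy one: if $X$ is totally real, suppose $\sum g_i^2 = 0$ in $R$; then at every real point $p\in X(\RR)$ we have $\sum g_i(p)^2 = 0$ (interpreting the $g_i$ as sections, or working in local coordinates on an affine chart meeting $X(\RR)$), forcing each $g_i(p)=0$; since $X(\RR)$ is Zariski dense and each $g_i$ is a regular function, $g_i = 0$ identically, so the sum was trivial. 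For the converse, suppose $X$ is a real variety that is \emph{not} totally real: then the Zariski closure $Z$ of $X(\RR)$ is a proper closed subscheme of $X$. I would produce a nonzero homogeneous element vanishing on $X(\RR)$, and then use the classical fact that a polynomial (here a homogeneous form on $\RR^{n+1}$, or a regular function after passing to a suitable affine chart) that vanishes on all of $\RR^m$ is a sum of squares of the real and imaginary parts of its factors — more precisely, if $h$ vanishes identically on $X(\RR)$ but not on $X$, then some real form of the shape $p^2 + q^2$ (coming from a complex conjugate pair in the factorization of a suitable auxiliary element) vanishes on $X$ without $p,q$ both vanishing. Taking the graded piece of the right degree $2j$ and clearing to put it in $R_{2j}$ yields a nontrivial sum of two squares in $R_j$ that is zero in $R_{2j}$, so $\Sos_{X,2j}$ is not pointed for that $j$, hence not for a cofinal set of $j$. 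Combined with the "for all $j$" in the statement, this gives the contrapositive.

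The main obstacle is the converse direction of the second stage: constructing, on a real variety whose real locus is not Zariski dense, an \emph{explicit} nontrivial sum of two squares of elements of $R_j$ that vanishes in $R_{2j}$, and controlling the degree so that this works for all sufficiently large (equivalently, by homogenizing, all) $j$. The standard trick is: since $X(\RR)$ is not dense, there is a nonzero $h\in R_k$ with $h$ vanishing on $X(\RR)$; then the ideal of $X(\RR)$ in $R$ contains $h$, and one shows the real radical of $(0)$ in $R$ is nonzero, i.e. some $a_1^2 + \dotsb + a_t^2 \in I_X$ with not all $a_i \in I_X$ — this is exactly the statement that $R$ is not \emph{real} as a ring, which is equivalent to $X(\RR)$ not being Zariski dense for a geometrically integral $R$. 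Once such a relation $\sum a_i^2 = 0$ in $R$ is in hand, multiply through by a square of a general linear form raised to an appropriate power to land the $a_i$ in degree exactly $j$, giving the desired nontrivial vanishing sum of squares in every degree $2j$ large enough, and then note that pointedness can fail for only finitely many $j$ exactly when it fails for one, since the relation persists under multiplication by squares. I would present the ring-theoretic reformulation ("$R$ real $\iff$ $X$ totally real") as the conceptual core and relegate the degree bookkeeping to a short paragraph.
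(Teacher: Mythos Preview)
Your proposal is correct and follows essentially the same route as the paper: use integrality of $R$ to verify the injectivity hypothesis of Proposition~\ref{pro:pointed}, then invoke (a)$\Leftrightarrow$(d) for the first sentence and (a)$\Leftrightarrow$(b) for the second. The paper's proof is in fact terser than yours: it simply \emph{asserts} the equivalence ``$X$ is totally real if and only if, for all $j$, no nontrivial sum of squares from $R_j$ equals zero in $R_{2j}$'' as a known fact and moves on, whereas you sketch a proof via the real radical and the characterization of real rings. Your outline of that equivalence is sound --- the clean formulation is that for a geometrically integral $\RR$-variety, $X(\RR)$ is Zariski dense iff the function field is formally real iff the homogeneous coordinate ring is a real ring --- and the degree bookkeeping (homogenize the relation, then multiply by powers of a linear form) works; but note that you only need the relation to exist for \emph{some} $j$ to contradict the ``for all $j$'' hypothesis, so the cofinality remark is unnecessary. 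Your first heuristic attempt at the converse (producing a $p^2+q^2$ from a complex conjugate factor pair of an $h$ vanishing on $X(\RR)$) is not correct as stated and should be dropped in favor of the real-radical argument you give afterward.
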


\begin{proof}
  Because $X$ is geometrically integral, its coordinate ring $R$ is a domain.
  Hence, each nonzero element in $R_j$ is a nonzerodivisor and the map
  $\eta_g \colon R_j \to R_{2j}$ is injective for all nonzero $g \in R_j$.  By
  combining this observation with Proposition~\ref{pro:pointed}, we first
  conclude that $\Sos_{X,2j}$ is pointed if and only if its dual
  $\Sos_{X,2j}^*$ is a many-faceted spectrahedral cone.  Secondly, $X$ is
  totally real if and only if, for all $j \in \NN$, no nontrivial sum of
  squares from $R_j$ equals zero in $R_{2j}$; compare with Lemma~2.1 in
  \cite{BSV}.  Therefore, the first part together with
  Proposition~\ref{pro:pointed} establishes that the cones $\Sos_{X,2j}^*$ are
  many-faceted for every $j \in \NN$ if and only if $X$ is totally real.
\end{proof}

\section{A Bertini Theorem for Separators}
\label{sec:Bertini}

\noindent
In this section, we explore the properties of separating hyperplanes within
convex algebraic geometry.  Two cones $C_1$ and $C_2$ in a real vector space
are \define{well-separated} if there exists a linear functional $\ell$ such
that $\ell(v) > 0$ for all nonzero $v \in C_1$ and $\ell(v) < 0$ for all
nonzero $v \in C_2$.  A linear functional $\ell$ with these properties is
called a \define{strict separator}.  If $C_1$ and $C_2$ are pointed (closed
and contain no lines), then being well-separated is equivalent to
$C_1 \cap C_2 = \{ 0 \}$.

The main result in this section is an analogue of Bertini's Theorem to convex
algebraic geometry.  As in Section~\ref{sec:many-faceted}, $X \subseteq \PP^n$
is a real projective subscheme with $\ZZ$-graded coordinate ring $R = S/I_X$
and $S = \RR[x_0, x_1, \dotsc, x_n]$.  Given an element $f \in R_{2j}$, we set
$f \cdot \Sos_{X,2k} \coloneq \{ fg \in R_{2j+2k} : g \in \Sos_{X,2k} \}$.
For a nonzero homogeneous polynomial $h \in S$, the associated hypersurface
section of $X$ is the subscheme
$X' \coloneq X \cap \variety(h) \subset \PP^n$.  The $\ZZ$-graded coordinate
ring of $X'$ is the quotient $R' \coloneq S/I_{X'}$ where $I_{X'}$ is the
saturated homogeneous ideal
$(I_X + \langle h \rangle : \langle x_0, x_1, \dotsc, x_n\rangle^\infty)$.  We
write $f' \in R'_{2j}$ for the canonical image of $f \in R_{2j}$.

\begin{theorem}
  \label{thm:Bertini}
  Fix positive integers $j$ and $k$. Let $X \subseteq \PP^n$ be a real
  projective subscheme with coordinate ring $R$ such that the map
  $\eta_g \colon R_{j+k} \to R_{2j+2k}$ is injective for all nonzero
  $g \in R_{j+k}$, and consider a nonzerodivisor $f \in R_{2j}$.  If the cones
  $\Sos_{X,2j+2k}$ and $f \cdot \Sos_{X,2k}$ are well-separated, then the set
  of hypersurface sections $X'$ of $X$, such that $\Sos_{X',2j+2k}$ and
  $f' \cdot \Sos_{X',2k}$ are well-separated, contains a nonempty open subset
  of $\PP(R_{j+k})$ in the Euclidean topology.
\end{theorem}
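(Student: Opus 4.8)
The plan is to reformulate the separation hypothesis in the language of Section~\ref{sec:many-faceted} and then perform a dimension count on an incidence variety parametrizing hypersurface sections together with witnesses to the failure of separation. Write $N := j+k$, let $V := R_N$, $S_2 := \Sym^2(V^*)$, and identify $\Sos_{X,2N}^*$ with the spectrahedral cone $C := L \cap S_2^+$ where $L := \sigma_N^*(R_{2N}^*)$, exactly as in the proof of Proposition~\ref{pro:pointed}. By hypothesis the multiplication map $\eta_g$ is injective for all nonzero $g \in R_N$, so Proposition~\ref{pro:pointed} applies: $\Sos_{X,2N}$ is pointed (it is well-separated from the nonzero cone $f \cdot \Sos_{X,2k}$, hence contains no line), and therefore $C$ is many-faceted. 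A strict separator for $\Sos_{X,2N}$ and $f \cdot \Sos_{X,2k}$ is precisely an interior point $\ell$ of $\Sos_{X,2N}^*$ that is negative on the nonzero elements of $f \cdot \Sos_{X,2k}$; equivalently, a positive-definite symmetric form $\sigma_N^*(\ell) \in L$ such that $\ell(f g) < 0$ for all nonzero $g \in \Sos_{X,2k}$. I would fix such an $\ell$, viewed as a positive-definite point $A := \sigma_N^*(\ell)$ of the interior of $C$.

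Next I pass to the hypersurface section. For $[h] \in \PP(R_N) = \PP(V)$ with $X' = X \cap \variety(h)$, the coordinate ring $R'$ in degrees $\leq 2N$ is obtained from $R$ by killing the ideal generated by $h$ (after saturation, which does not affect these degrees for general $h$); so $R'_{2N} \cong R_{2N}/\langle h\rangle_{2N}$ and, dually, $(R'_{2N})^* \hookrightarrow (R_{2N})^*$ is the annihilator of $\langle h \rangle_{2N}$. Under $\sigma_N^*$ this subspace is exactly $L \cap T_h$, the forms in $L$ having $h$ in their kernel — this is the computation already carried out in the '(a)$\Leftrightarrow$(d)' step of Proposition~\ref{pro:pointed}, which also shows $L$ and $T_h$ meet transversely, so $\dim(L \cap T_h) = \dim L - \dim V$. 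The linear functional $\ell$ descends to a separator for $\Sos_{X',2N}$ and $f' \cdot \Sos_{X',2k}$ precisely when the restriction $A|_{h^\perp\text{-part}}$ is positive definite on $V/\langle h\rangle$ \emph{and} the corresponding functional is still strictly negative on the (image of the) nonzero cone $f' \cdot \Sos_{X',2k}$. The first condition holds for \emph{all} nonzero $h$, since $A$ is positive definite on all of $V$; so the whole content is the strict negativity.

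The strict-negativity condition is where the many-faceted hypothesis does its work, and this is the main obstacle. Failure of separation for $X'$ means there is a nonzero $g' \in \Sos_{X',2k}$ with $\ell(f'g') \geq 0$; lifting $g'$ to $g \in \Sos_{X,2k}$, this says $\ell(fg) \geq 0$ modulo the contribution of $h$, i.e. $fg \equiv (\text{something in }\langle h\rangle_{2N})$ in a sense making $\ell$ nonnegative — more precisely, $\ell$ fails to be negative on the image of $fg$ in $R'_{2N}$. I would set up the incidence set
\[
Z := \bigl\{ ([h],[\overline{fg}]) \in \PP(V) \times \PP(R_{2N}) : g \in \Sos_{X,2k}\setminus\{0\},\ \ell\text{-image of }fg\text{ in }R'_{2N}\text{ is }\geq 0 \bigr\},
\]
and argue that its projection to $\PP(V)$ is a closed subset (using closedness of $\Sos_{X,2k}$, which holds because $\Sos_{X,2k}^*$ is full-dimensional by the analogous many-faceted statement, together with properness of the relevant projectivizations). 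It is a proper closed subset: the point is that the separating functional $\ell$ for $X$ witnesses that $[h] = [$generic element$]$ is \emph{not} in the projection, and Lemma~\ref{lem:wellDefined} applied to the many-faceted cone $C$ guarantees that, near every corank-$1$ boundary point, the local structure of facets varies with $h$ in a way that prevents the bad locus from containing an open set — this is exactly the 'recognizing the dependency on Proposition~\ref{pro:pointed}' remark in the introduction. Concretely, I would use that $f$ is a nonzerodivisor so that $g \mapsto fg$ is injective on $R_{2k} \to R_{2N}$, reduce to showing the map sending $[h]$ to the corresponding facet of $C$ is open (Lemma~\ref{lem:wellDefined}), and conclude that the complement of the projection of $Z$ — the set of $[h]$ for which $\Sos_{X',2N}$ and $f'\cdot\Sos_{X',2k}$ are well-separated — contains a nonempty Euclidean-open subset of $\PP(R_{N})$. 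Handling the saturation subtlety (ensuring $R'_{2N}$ really is $R_{2N}/\langle h\rangle_{2N}$ for $h$ in a dense open set) is a routine but necessary bookkeeping step I would fold into the final paragraph.
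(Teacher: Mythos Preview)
Your approach has a genuine gap at the step where you ``restrict'' the fixed separator $\ell$ to the hypersurface section.  You choose $\ell$ in the \emph{interior} of $\Sos_{X,2N}^*$, so $A=\sigma_N^*(\ell)$ is positive definite on $V=R_N$ and in particular has trivial kernel.  But a linear functional $\ell\in R_{2N}^*$ descends to $(R'_{2N})^*$ only if it annihilates $\langle h\rangle_{2N}=h\cdot R_N$, i.e.\ only if $h\in\Ker\sigma_N^*(\ell)$.  For your positive-definite $\ell$ this forces $h=0$.  Your sentence ``the first condition holds for all nonzero $h$, since $A$ is positive definite on all of $V$'' is exactly where the argument breaks: the restriction of the quadratic form $A$ to a hyperplane of $V$ is not the same thing as a linear functional on $R'_{2N}$, and it is the latter that you need in order to speak of a separator for $\Sos_{X',2N}$ and $f'\cdot\Sos_{X',2k}$.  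Consequently the incidence set $Z$ is not well defined as written (the ``$\ell$-image of $fg$ in $R'_{2N}$'' depends on the lift of $g$ unless $\ell$ kills $h\cdot R_N$), and the dimension count never gets started.

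The paper's argument inverts your picture: rather than fixing $\ell$ and varying $h$, it takes $\ell$ on the \emph{boundary} of $\Sos_{X,2N}^*$ with corank exactly $1$, while still lying in the interior of $(-f\cdot\Sos_{X,2k})^*$.  Such an $\ell$ automatically annihilates $h\cdot R_N$ where $\Span(h)=\Ker\sigma_N^*(\ell)$, so it \emph{does} descend to $(R'_{2N})^*$ for that specific $X'=X\cap\variety(h)$; there it is positive definite (the only kernel direction was $h$, now quotiented out) and remains strictly negative on $f'\cdot\Sos_{X',2k}$.  The many-faceted property---this is where Proposition~\ref{pro:pointed} is actually used---guarantees that as $\ell$ ranges over such corank-$1$ boundary separators, the resulting $[h]$ sweeps out a Euclidean-open subset of $\PP(R_N)$.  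So the open set of good $h$'s is produced directly, without any incidence-variety bookkeeping.
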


\begin{proof}
  To begin, we prove that the cones $\Sos_{X,2j+2k}$ and
  $f \cdot \Sos_{X,2k}$ are pointed.  By hypothesis, $\Sos_{X,2j+2k}$ and
  $f \cdot \Sos_{X,2k}$ are well-separated, so neither cone contains a line.
  Hence, Remark~\ref{rem:nonclosed} shows that $\Sos_{X,2j+2k}$ is also
  closed.  As $f$ is a nonzerodivisor, the map
  $\eta_f \colon R_{2k} \to R_{2j+2k}$ is injective, so the cone $\Sos_{X,2k}$
  is isomorphic to the cone $f \cdot \Sos_{X,2k}$.  Hence, a second
  application of Remark~\ref{rem:nonclosed} establishes that both
  $\Sos_{X,2k}$ and $f \cdot \Sos_{X,2k}$ are closed.

  Now, let $C \coloneq \Sos_{X,2j+2k}^* \cap (-f \cdot \Sos_{X,2k})^*$ be the
  cone of separators.  The cone $C$ is closed and full-dimensional because
  $\Sos_{X,2j+2k}$ and $f \cdot \Sos_{X,2k}$ are well-separated.  In
  particular, the boundary of $C$ is not contained in the boundary of
  $\Sos_{X,2j+2k}^*$ or the boundary of $(-f \cdot \Sos_{X,2k})^*$.  Since the
  cone $(-f \cdot \Sos_{X,2k})^*$ is full-dimensional and
  \begin{align*}
    D &\coloneq \partial C \setminus \partial (-f \cdot \Sos_{X,2k})^* \\
    &= \Bigl( \bigl( \partial (-f \cdot \Sos_{X,2k})^* \cap \Sos_{X,2j+2k}^*
      \bigr) \cup \bigl( (-f \cdot \Sos_{X,2k})^* \cap \partial
      \Sos_{X,2j+2k}^* \bigr) \Bigr) \setminus \partial (-f \cdot
      \Sos_{X,2k})^* \\
    &= \bigl( (-f \cdot \Sos_{X,2k})^* \setminus \partial(-f \cdot
      \Sos_{X,2k})^* \bigr) \cap \partial \Sos_{X,2j+2k}^* \, ,
  \end{align*}
  it follows that $D$ is a nonempty open subset of $\partial \Sos_{X,2j+2k}^*$
  in the Euclidean topology.  Since $\Sos_{X,2j+2k}$ is pointed,
  Proposition~\ref{pro:pointed} implies that $\Sos_{X,2j+2k}^*$ is a
  many-faceted spectrahedral cone.  Hence, the points with corank $1$ form a
  dense subset of $\partial \Sos_{X,2j+2k}^*$, so we may choose $g \in D$ with
  corank $1$. Moreover, for a sufficiently small $\varepsilon > 0$, the image
  of the map $\varphi \colon B_{\varepsilon}(g) \cap D \to \PP(R_{j+k})$
  contains a neighbourhood $U$ of $\varphi(g)$.  Hence, if
  $\ell \in R_{2j+2k}^*$ satisfies $[\varphi(\ell)] \in U$, then there exists 
  $h \in R_{j+k}$ such that $\Ker \sigma_{j+k}^*(\ell) = \Span(h)$.  Let
  $X' \coloneq X \cap \variety(h)$ denote the corresponding hypersurface section
  with coordinate ring $R'$.  Since $\ell'$ has corank $1$, the linear
  functional $\ell \in R_{2j+2k}^*$ induces a strict separator
  $\ell' \in (R')_{2j+2k}^*$ on the cones $\Sos_{X',2j+2k}$ and
  $f' \cdot \Sos_{X',2k}$.  Therefore, the set of $X'$, such that
  $\Sos_{X',2j+2k}$ and $f' \cdot \Sos_{X',2k}$ are well-separated, contains
  the nonempty open subset $U$ of $\PP(R_{j+k})$.
\end{proof}

To exploit Theorem~\ref{thm:Bertini}, we also need to understand the
properties of strict separators on zero-dimensional schemes.  As we will see,
the existence of strict separators imposes nontrivial constrains on a set of
points. For a real projective scheme $X \subseteq \PP^n$ with homogeneous
coordinate ring $R$, the Hilbert function $\HF_X \colon \ZZ \to \ZZ$ is
defined by $\HF_X(j) \coloneq \dim_\RR R_j$.  Following Section~3.1 in
\cite{HarrisMontreal} or Section~2 in \cite{HarrisGenus}, a set of points
$X \subseteq \PP^n$, that is a zero-dimensional reduced subscheme, has the
\define{uniform position property} if the Hilbert function of a subset of $X$
depends only on the cardinality of the subset.

The concluding proposition of this section shows that the existence of certain
positive linear functionals on a set of points imposes constraints on its
Hilbert function.

\begin{proposition}
  \label{pro:zero}
  Fix positive integers $j$ and $k$, and let $X \subseteq \PP^n$ be a set of
  at least two points with the uniform position property.
  \begin{enumerate}[\upshape (i)]
  \item Suppose that $X$ has no real points.  If there exists a linear
    functional $\ell \in R_{2k}^*$ that is positive on the nonzero elements in
    $\Sos_{X,2k}$, then we have
    $\HF_X(k) \leq \bigl\lceil \frac{1}{2} \HF_X(2k) \bigr\rceil$.
  \item Suppose that $f \in R_{2j}$ is positive on $X(\RR)$ and does not
    vanish at any point in $X(\CC)$.  If there exists a linear functional
    $\ell \in R_{2j+2k}^*$ that is a strict separator for $\Sos_{X,2j+2k}$ and
    $f \cdot \Sos_{X,2k}$, then we have
    $\HF_X(k) + \HF_X(j+k) \leq \HF_X(2j+2k)$.
  \end{enumerate}
\end{proposition}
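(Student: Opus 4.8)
The plan is to translate the existence of a strict (or merely positive) linear functional into a dimension count via the Gram-matrix picture from Section~\ref{sec:many-faceted}, and then exploit the uniform position property to pin down the Hilbert function of $X$. Recall that for a set of points $X$ the coordinate ring $R$ is reduced but not a domain; nonetheless, since the points are distinct and $X$ is zero-dimensional, evaluation at the points realizes $R_m$ as a subspace of $\prod_{p \in X(\CC)} k(p)$ for $m \gg 0$, and in fact for $m \geq \rr(X)$ the map $R_m \to \prod_p k(p)$ is an isomorphism, so $\HF_X(m)$ stabilizes at $\deg(X) = \#X(\CC)$. The uniform position property will be used in the form: for any subset $Y \subseteq X$ and any $m$, $\HF_Y(m) = \min\{\HF_X(m), \#Y\}$, and in particular $R_m$ separates points of $X$ whenever $\HF_X(m) \geq 2$.

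For part (i), suppose $\ell \in R_{2k}^*$ is positive on nonzero elements of $\Sos_{X,2k}$. Then $\sigma_k^*(\ell) \in \Sym^2(R_k^*)$ is positive \emph{definite}: if $g \in R_k$ were a nonzero kernel vector, then $\ell(g^2) = 0$ with $g^2 \in \Sos_{X,2k}$ nonzero (here $g^2 \neq 0$ because $R$ is reduced and $g \neq 0$), contradicting strict positivity. So $\sigma_k^*(\ell)$ has rank $\HF_X(k)$. On the other hand, I want to bound the rank of any such form from above. The key is that $\ell$, being positive on all squares, when pushed forward by evaluation, is represented by a positive measure-like functional supported on $X(\CC)$; concretely, decompose $\ell = \sum_{p} c_p \,\mathrm{ev}_p^2$ plus correction terms coming from the pairing of conjugate non-real points — but since $X$ has \emph{no} real points, the points come in conjugate pairs $\{p, \bar p\}$, and a real functional positive on squares of real forms, evaluated on $R_k \otimes R_k$, factors through the conjugation-symmetric part. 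For each conjugate pair the positive-semidefinite contribution to the Gram form has rank at most $2$ as a real form on the $2$-dimensional space $\mathrm{Span}_\RR(\mathrm{ev}_p, \mathrm{ev}_{\bar p})|_{R_k}$, but in fact the constraint that it be nonnegative on real \emph{squares} forces the rank to be $1$ on each pair (a Hermitian rank-one form $v\bar v^{\transpose}$ restricted to reals has real rank $\leq 2$, but nonnegativity on $\{g^2\}$ eliminates one dimension). Summing over the $\tfrac12 \HF_X(2k)$ conjugate pairs gives $\HF_X(k) = \rank \sigma_k^*(\ell) \leq \lceil \tfrac12 \HF_X(2k)\rceil$. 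I should double-check the ceiling versus floor here and the degenerate case where a pair collapses, which is where the $\lceil\,\cdot\,\rceil$ comes from.

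For part (ii), apply the analogous argument twice. A strict separator $\ell \in R_{2j+2k}^*$ is positive on nonzero elements of $\Sos_{X,2j+2k}$, hence $\sigma_{j+k}^*(\ell)$ is positive definite and has rank $\HF_X(j+k)$. Simultaneously $\ell$ is negative on nonzero elements of $f\cdot\Sos_{X,2k}$, i.e. $g \mapsto -\ell(fg^2)$ is positive definite on $R_k$ — here we use that $f$ does not vanish at any point of $X(\CC)$, so $\eta_f \colon R_{2k} \to R_{2j+2k}$ is injective and $fg^2 \neq 0$ for $g \neq 0$ — so this twisted Gram form has rank $\HF_X(k)$. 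The idea is that $\ell$ itself, viewed through evaluation at the (now possibly real) points of $X(\CC)$, is a signed combination $\ell = \sum_p c_p\, \mathrm{ev}_p^{\otimes 2}$ (plus conjugate-pairing terms): the constraint that it be \emph{positive} on squares of degree-$(j+k)$ forms forces, via the uniform position property and point-separation in degree $j+k$, that $c_p > 0$ for every real point $p$; the constraint that $-\ell$ be positive on $f \cdot (\text{squares of degree-}k)$ forces $c_p f(p) < 0$, i.e. $c_p < 0$, at every real $p$ where $f(p) > 0$, and $X(\RR)$ is exactly such a set. Hence $X(\RR) = \varnothing$... no — rather, the conclusion must instead be the \emph{dimension inequality} $\HF_X(k) + \HF_X(j+k) \leq \HF_X(2j+2k)$, obtained as follows: the ranks of the two Gram forms are $\HF_X(j+k)$ and $\HF_X(k)$, their underlying quadratic forms live on "complementary" parts of the decomposition $R_{2j+2k}^* \hookrightarrow \prod_{p} k(p)$ of dimension $\HF_X(2j+2k)$ — one supported where a certain sign pattern holds and the other where it is reversed — and these supports are disjoint, so the sum of the ranks is at most the total dimension $\HF_X(2j+2k)$.

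The main obstacle is making precise the claim that the "supports" of the two positive-semidefinite Gram forms are disjoint subsets of the point set, which is what forces additivity of ranks rather than just a max. This requires: (a) identifying $R_m^*$ for the relevant $m$ with functions on $X(\CC)$ (valid once $m \geq \rr(X)$, hence the role of $\rr(X)$ elsewhere, but the proposition as stated has no such hypothesis, so I must instead argue via the uniform position property that the relevant multiplication maps $R_k \times R_k \to R_{2k}$, $R_{j+k}\times R_{j+k}\to R_{2j+2k}$, $R_{2k}\times R_{2j}\to R_{2j+2k}$ are sufficiently nondegenerate on the subsets of $X$ that matter); and (b) showing that a real functional $\ell$ with $\ell(q^2) > 0$ for all nonzero $q \in R_{j+k}$ and $\ell(fg^2) < 0$ for all nonzero $g \in R_k$ cannot "see" the same point twice with both signs — this is where the uniform position property does the real work, guaranteeing enough forms of each degree to localize at individual (conjugate pairs of) points. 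I expect part (ii) to reduce to a careful bookkeeping of a partition $X(\CC) = X^+ \sqcup X^-$ into points where $\ell$ contributes positively versus negatively, with $\rank \sigma_{j+k}^*(\ell) \leq \#X^+$-ish and $\rank(\text{twisted form}) \leq \#X^-$-ish, and $\#X^+ + \#X^- = \deg X = \HF_X(m)$ for $m$ large; the honest version must avoid invoking "$m$ large" and instead run entirely inside degrees $k$, $j+k$, $2j+2k$ using uniform position.
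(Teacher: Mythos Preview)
Your general framework --- positive-definiteness of the Gram forms, expansion of $\ell$ in point evaluations, and a signature count --- is exactly the paper's approach, but two of your key computations are off in ways that would block the argument from closing.

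First, the contribution of a complex conjugate pair is not a positive-semidefinite rank-$2$ form that ``drops to rank $1$''.  Writing a real $\ell$ as a conjugation-invariant combination of evaluations at $e$ real points (with real coefficients $\kappa_i$) and $m$ conjugate pairs $\tilde a_j \pm \tilde b_j\sqrt{-1}$ (with coefficients $\lambda_j \pm \mu_j\sqrt{-1}$), the $j$-th pair contributes to $\sigma_k^*(\ell)$ the form $\lambda_j\bigl((\tilde a_j^*)^2-(\tilde b_j^*)^2\bigr)-2\mu_j\,\tilde a_j^*\tilde b_j^*$, whose matrix $\bigl[\begin{smallmatrix}\lambda_j&-\mu_j\\-\mu_j&-\lambda_j\end{smallmatrix}\bigr]$ has eigenvalues $\pm\sqrt{\lambda_j^2+\mu_j^2}$.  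So each nonzero pair contributes signature $(1,1)$: exactly one positive eigenvalue and one negative.  Hence the total positive index of $\sigma_k^*(\ell)$ is at most $e_{+}+m'$ (positive $\kappa_i$'s plus nonzero pairs), and when $\sigma_k^*(\ell)$ is positive-definite this gives $\HF_X(k)\le e_{+}+m'$.  For part~(i), with no real points, uniform position lets you span $R_{2k}^*$ by evaluations at $m=\lceil\tfrac12\HF_X(2k)\rceil$ conjugate pairs, and the bound follows.  Your ``nonnegativity eliminates one dimension'' heuristic is pointing at the right inequality but for the wrong reason.

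Second, and more seriously, your ``disjoint supports'' model for part~(ii) cannot work as stated, precisely because of the conjugate pairs: a pair with nonzero coefficient contributes one positive \emph{and} one negative eigenvalue, so it shows up in \emph{both} rank bounds, not in a single block $X^{+}$ or $X^{-}$.  The correct bookkeeping is this.  The twisted functional $\ell'(g)=\ell(fg)$ is again a combination of the \emph{same} point evaluations, with coefficients multiplied by $f(p)$; since $f>0$ on $X(\RR)$ and $f\ne 0$ on $X(\CC)$, the numbers $e_{+},e_{-},m'$ are unchanged.  Negative-definiteness of $\sigma_k^*(\ell')$ then gives $\HF_X(k)\le e_{-}+m'$, and adding to $\HF_X(j+k)\le e_{+}+m'$ yields
\[
\HF_X(k)+\HF_X(j+k)\;\le\;(e_{-}+m')+(e_{+}+m')\;\le\; e+2m\;=\;\HF_X(2j+2k),
\]
the last equality holding when a conjugate-invariant subset of size $e+2m=\HF_X(2j+2k)$ spans $R_{2j+2k}^*$ --- which is exactly what uniform position provides, not ``localization at individual points'' as you suggest.  (There is a small extra case when no such conjugate-invariant basis exists; it is handled by taking $e=0$ and $m=\lceil\tfrac12\HF_X(2j+2k)\rceil$ and using that the Hilbert function of a point set is strictly increasing until it stabilizes.)  Your concern about needing $m\ge\rr(X)$ to identify $R_m^*$ with functions on $X$ is a red herring: you only need a spanning set of point evaluations, which uniform position gives at any degree.
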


\begin{proof}
  We begin with an analysis of the symmetric forms arising from point
  evaluations.  Let $Z \subseteq X$ be a subset consisting of $e$ distinct
  real points and $m$ complex conjugate pairs.  Choose affine representatives
  $\tilde{p}_1, \tilde{p}_2, \dotsc, \tilde{p}_e \in \AA^{n+1}(\RR)$ for
  points in $Z(\RR)$, and choose affine representatives
  $\tilde{a}_1 \pm \tilde{b}_1 \sqrt{-1}, \tilde{a}_2 \pm \tilde{b}_2
  \sqrt{-1}, \dotsc, \tilde{a}_m \pm \tilde{b}_m \sqrt{-1} \in \AA^{n+1}(\RR)$
  where $\tilde{a}_j, \tilde{b}_j \in \AA^{n+1}(\RR)$ for the complex
  conjugate pairs in $Z(\CC)$.  For any $p \in X(\RR)$ and any $k \in \NN$,
  evaluation at an affine representative $\tilde{p} \in \AA^{n+1}(\RR)$
  determines the linear functional $\tilde{p}^* \in R_{2k}^*$.  Any linear
  functional $\ell \in R_{2k}^*$ lying in the span of these point evaluations
  can be written as
  \begin{align*}
    \ell 
    &= \sum_{i=1}^e \kappa_i \tilde{p}_i^* + \sum_{j=1}^m \bigl( (\lambda_j
      + \mu_j \sqrt{-1})(\tilde{a}_j + \tilde{b}_j \sqrt{-1})^* + (\lambda_j
      - \mu_j \sqrt{-1})(\tilde{a}_j - \tilde{b}_j \sqrt{-1})^* \bigr) 
  \end{align*}
  where $\kappa_i, \lambda_j, \mu_j \in \RR$ for $1 \leq i \leq e$ and
  $1 \leq j \leq m$.  It follows that
  \[
  \sigma_{k}^*(\ell) = \sum_{i=1}^e \kappa_i (\tilde{p}_i^*)^2 + \sum_{j=1}^m
  \lambda_j \bigl( (\tilde{a}_j^*)^2 - (\tilde{b}_j^*)^2 \bigr) - 2 \mu_j
  (\tilde{a}_j^*) (\tilde{b}_j^*) \in \Sym^2(R_k^*) \, .
  \]
  The eigenvalues for the symmetric matrix 
  \[
    \begin{bmatrix}
      \phantom{-}\lambda_j & - \mu_j \\
      -\mu_j & -\lambda_j \\
    \end{bmatrix}
  \]
  are $\pm \sqrt{\smash[b]{\lambda_j^2 + \mu_j^2}}$, so the number of positive
  eigenvalues for $\sigma_{k}^*(\ell)$ is at most the number $e_+$ of
  positive $\kappa_i$ plus the number $m'$ of nonzero $\lambda_j^2 + \mu_j^2$.
  Similarly, the number of negative eigenvalues for $\sigma_{k}^*(\ell)$ is
  at most the number $e_{-}^{}$ of negative $\kappa_i$ plus the number $m'$ of
  nonzero $\lambda_j^2 + \mu_j^2$.  Hence, if $\sigma_{k}^*(\ell)$ is
  positive-definite, then we have $\HF_X(k) = \dim R_k^* \leq e_{+} + m'$.

  Using this analysis, we prove (i).  Assume that $\ell \in R_{2k}^*$ is
  positive on the nonzero elements in $\Sos_{X,2k}$.  A form in $R_{2k}^*$ is
  zero if and only if it is annihilated by $\tilde{p}^* \in R_{2k}^*$ for all
  points $p \in X(\CC)$.  Hence, every linear functional in $R_{2k}^*$ can be
  written as a $\CC$\nobreakdash-linear combinations of such point
  evaluations.  The evaluations at the points in any subset $X$, with
  cardinality at least $\HF_X(2k)$, span $R_{2k}^*$ because $X$ has the
  uniform position property.  As $X$ is a set of points, the value of Hilbert
  function $\HF_X(k)$ is at most the number of points.  Since
  $X(\RR) = \varnothing$, we may choose $m$ conjugate pairs of points in
  $X(\CC)$ with $m \coloneq \bigl\lceil \tfrac{1}{2} \HF_X(2k) \bigr\rceil$;
  in particular, we have $e = 0$.  Since $\sigma_{k}^*(\ell)$ is
  positive-definite, the first paragraph shows that
  $\HF_X(k) \leq m' \leq m = \bigl\lceil \tfrac{1}{2} \HF_X(2k) \bigr\rceil$
  as required.

  We next examine the symmetric forms induced by the element $f \in R_{2k}$.
  Given $\ell \in R_{2k+2k}^*$, the linear functional $\ell' \in R_{2k}^*$ is
  defined by $\ell'(g) \coloneq \ell(fg)$ for all $g \in R_{2k}$.  When
  $\ell \in R_{2j+2k}^*$ lies in the span of the point evaluations for $Z$,
  the expression for $\ell'$ as a linear combination of the point evaluations
  has the same number of positive, negative, and nonzero coefficients as
  $\ell$ because $f \in R_{2j}$ is positive on $X(\RR)$ and does not vanish at
  any points in $X(\CC)$.  Hence, if $\sigma_{k}^*(\ell')$ is
  negative-definite, then the first paragraph implies that
  $\HF_X(k) = \dim R_k^* \leq e_{-}^{} + m'$.

  Lastly, we establish (ii).  Assume that $\ell \in R_{2j+2k}^*$ is a strict
  separator for $\Sos_{X,2j+2k}$ and $f \cdot \Sos_{X,2k}$.  As in the second
  paragraph, we may choose a subset of $X$ such that the point evaluations
  span $R_{2j+2k}^*$.  Suppose that there exists a conjugate-invariant basis
  of $R_{2j+2k}^*$ consisting of point evaluations at $e$ distinct real points
  and $m$ complex conjugate pairs.  Since $\sigma_{k}^*(\ell')$ is
  negative-definite and $\sigma_{j+k}^*(\ell)$ is positive-definite, the
  first and third paragraphs combine to show that
  \[
  \HF_X(k) + \HF_X(j+k) \leq (e_{-}^{} +m') + (e_{+} + m') \leq e +
  2m = \HF_X(2j+2k) \, .
  \]
  On the other hand, if no subset of $X$ yields a conjugate-invariant basis of
  $R_{2j+2k}^*$, then there are
  $m \coloneq \bigl\lceil \tfrac{1}{2} \HF_X(2j + 2k) \bigr\rceil$ conjugate
  pairs of points in $X(\CC)$ that span $R_{2j+2k}^*$ and we have $e = 0$.
  Hence, we obtain $\HF_X(j+k) \leq m$,
  $2 \HF_X(j+k) \leq 2m = \HF_X(2j+2k) +1$, and
  $2 \HF_X(j+k) -1 \leq \HF_X(2j+2k)$.  With the goal of finding a
  contradiction, assume that $\HF_X(k) + \HF_X(j+k) > \HF_X(2j+2k)$.  It
  follows that $\HF_X(k) + 1 > \HF_X(j+k)$.  The Hilbert function of a set of
  points is strictly increasing until it stabilizes at the number points, so
  we deduce that $\HF_X(k) = \HF_X(j+k) = \HF_X(2j+2k)$.  Hence, the
  inequality $2 \HF_X(j+k) \leq \HF_X(2j+2k) +1$ implies that
  $\HF_X(j+k) = 1$.  However, this contradicts the hypothesis that $X$ has at
  least two points.  Therefore, we conclude that
  $\HF_X(k) + \HF_X(j+k) \leq \HF_X(2j+2k)$.
\end{proof}

\section{Upper Bounds for Sum-of-Squares Multipliers}
\label{sec:boundingMultipliers}

\noindent
This section establishes an upper bound on the minimal degree of a
sum-of-squares multiplier.  These geometric degree bounds for the existence of
multipliers prove the first halves of our main theorems.  After a preparatory
lemma, Theorem~\ref{thm:upperBound} describes the general result for curves
and is followed by several corollaries and valuable examples.  The same
approach is then applied to higher-dimensional varieties to obtain the general
Theorem~\ref{thm:totallyreal}.  The ensuing examples illustrate the
applicability of this theorem.

Throughout this section, we work with a real projective subscheme
$X \subseteq \PP^n$ with $\ZZ$-graded coordinate ring $R = S/I_X$.  The sign
of an element $f \in R_{2j}$ at a real point $p \in X(\RR)$ is defined to be
$\sign_{p}(f) \coloneq \sign\bigl( \tilde{f}(\tilde{p}) \bigr) \in \{-1,0,1\}$,
where the polynomial $\tilde{f} \in S_{2j}$ maps to $f$ and the nonzero real
point $\tilde{p} \in \AA^{n+1}(\RR)$ maps to $p$ under the canonical quotient
maps.  Since $p \in X(\RR)$, the real number $\tilde{f}(\tilde{p})$ is
independent of the choice of $\tilde{f}$.  Similarly, the choice of affine
representative $\tilde{p}$ is determined up to a nonzero real number and the
degree of $f$ is even, so the value of $\tilde{f}(\tilde{p})$ is determined up
to the square of a nonzero real number.  Hence, the sign of $f \in R_{2j}$ at
$p \in X(\RR)$ is well-defined.  We simply write $f(p) \geq 0$ for
$\sign_p(f) \geq 0$.  The subset $\Pos_{X,2j} \coloneq \{ f \in R_{2j} :
\text{$f(p) \geq 0$ for all $p \in X(\RR)$} \}$ forms a pointed
full-dimensional convex cone in $R_{2j}$; see Lemma~2.1 in \cite{BSV}.

As our initial focus, a \define{curve} $X \subseteq \PP^n$ is a
one-dimensional projective variety.  Following Chapter~2 of \cite{Migliore},
the \define{deficiency module} (also known as the Hartshorne-Rao module) of
$X$ is the $\ZZ$-graded $S$\nobreakdash-module
$M_X \coloneq \bigoplus_{i \in \ZZ} H^1\bigl(\PP^n, \mathcal{I}_{X}(i) \bigr)$.  A
homogeneous polynomial $h \in S = \RR[x_0,x_1,\dotsc,x_n]$ determines the
$\ZZ$\nobreakdash-graded submodule
$\Ann_{M_X}(h) \coloneq ( 0 :_{M(X)} h ) = \{ f \in M_X : fh = 0 \}$ of the
deficiency module $M_X$.  The next lemma (cf.{} Proposition~2.1.2 in
\cite{Migliore}) shows that this submodule measures the failure of the ideal
$I_X + \langle h \rangle$ to be saturated.

\begin{lemma}
  \label{lem:radicality}
  Fix a positive integer $j$ and a nonnegative integer $k$. Let
  $X \subseteq \PP^n$ be a curve.  If $h \in S_k$ does not belong to the ideal
  $I_X$ and $X' \coloneq X \cap \variety(h)$ is the associated hypersurface section
  of $X$, then we have $\Ann_{M_X}(h)_{j-k} = 0$ if and only if
  $(I_{X'})_{j} = (I_X + \langle h \rangle)_{j}$.
\end{lemma}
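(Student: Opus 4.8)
The plan is to produce a natural isomorphism between the ``defect'' $(I_{X'})_j / (I_X + \langle h \rangle)_j$ and the graded piece $\Ann_{M_X}(h)_{j-k}$ of the annihilator submodule; since both are finite-dimensional real vector spaces, one vanishes precisely when the other does, which is exactly the asserted equivalence. (This is in essence Migliore's Proposition~2.1.2, but I would give the argument directly.) Throughout, write $R := S/I_X$ and $J := I_X + \langle h \rangle$. Because $I_X$ is the \emph{saturated} ideal of $X$, we have $(I_X)_m = H^0\bigl(\PP^n, \mathcal{I}_X(m)\bigr)$ for every $m$; and, as the scheme-theoretic intersection $X' = X \cap \variety(h)$ has for its ideal sheaf the sum $\mathcal{I}_{X'} = \mathcal{I}_X + h\,\mathcal{O}_{\PP^n}(-k)$ of ideal sheaves in $\mathcal{O}_{\PP^n}$, also $(I_{X'})_j = H^0\bigl(\PP^n, \mathcal{I}_{X'}(j)\bigr)$ and $J_j = (I_X)_j + h\,S_{j-k}$.

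The first step is to write down the Mayer--Vietoris short exact sequence for the two subsheaves $\mathcal{I}_X$ and $h\,\mathcal{O}_{\PP^n}(-k)$ of $\mathcal{O}_{\PP^n}$, twisted by $j$:
\[
0 \longrightarrow \mathcal{I}_X(j-k) \xrightarrow{\ (h,\, \iota)\ } \mathcal{I}_X(j) \oplus \mathcal{O}_{\PP^n}(j-k) \xrightarrow{\ (a,\, s)\, \mapsto\, a - h s\ } \mathcal{I}_{X'}(j) \longrightarrow 0 \, ,
\]
where $\iota$ is the inclusion $\mathcal{I}_X(j-k) \hookrightarrow \mathcal{O}_{\PP^n}(j-k)$. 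The exactness of the Mayer--Vietoris sequence is formal once the maps are specified; the only content is the identification of the left-hand term, namely $\mathcal{I}_X \cap h\,\mathcal{O}_{\PP^n}(-k) = h\,\mathcal{I}_X(-k) \cong \mathcal{I}_X(-k)$, and this is the one place where the hypotheses enter. Indeed, since $X$ is a variety, $\mathcal{O}_X$ is a sheaf of integral domains, and since $h \notin I_X$ the image of $h$ in $\mathcal{O}_X$ is a nonzero, hence nonzerodivisor, section; thus stalkwise $(\mathcal{I}_X : h) = \mathcal{I}_X$, which is exactly the required identity. I expect this elementary observation --- that $h$ restricts to a nonzerodivisor on the integral scheme $X$ --- to be the only real input; the rest is bookkeeping with a long exact sequence.

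Finally, I would pass to the long exact cohomology sequence of the displayed short exact sequence. Using $H^0\bigl(\PP^n, \mathcal{I}_X(j)\bigr) = (I_X)_j$ and $H^0\bigl(\PP^n, \mathcal{O}_{\PP^n}(j-k)\bigr) = S_{j-k}$, the image of the second map on global sections is $(I_X)_j + h\,S_{j-k} = J_j$; and because $H^1\bigl(\PP^n, \mathcal{O}_{\PP^n}(j-k)\bigr) = 0$ for $n \geq 2$ (the case $n = 1$ forces $X = \PP^1$, where the statement is trivial), the connecting homomorphism induces an isomorphism
\[
(I_{X'})_j \big/ J_j \ \xrightarrow{\ \sim\ }\ \ker\Bigl( H^1\bigl(\PP^n, \mathcal{I}_X(j-k)\bigr) \xrightarrow{\ \cdot\, h\ } H^1\bigl(\PP^n, \mathcal{I}_X(j)\bigr) \Bigr) = \Ann_{M_X}(h)_{j-k} \, ,
\]
the last equality being just the definition of the deficiency module and of $\Ann_{M_X}(h)$. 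Hence $(I_{X'})_j = (I_X + \langle h \rangle)_j$ if and only if $\Ann_{M_X}(h)_{j-k} = 0$, as claimed. Equivalently, one could run the same computation with local cohomology: $(I_{X'})_j/J_j = \bigl[H^0_{\mathfrak{m}}(R/hR)\bigr]_j$, the sequence $0 \to R(-k) \xrightarrow{h} R \to R/hR \to 0$ (exact since $R$ is a domain and $h \neq 0$ in $R$) together with $H^0_{\mathfrak{m}}(R) = 0$ gives $H^0_{\mathfrak{m}}(R/hR) \cong \Ann_{H^1_{\mathfrak{m}}(R)}(h)(-k)$, and $H^1_{\mathfrak{m}}(R) \cong M_X$.
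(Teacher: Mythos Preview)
Your proof is correct and follows essentially the same approach as the paper: both use the Mayer--Vietoris short exact sequence for the subsheaves $\mathcal{I}_X$ and $h\,\mathcal{O}_{\PP^n}(-k)$ of $\mathcal{O}_{\PP^n}$, then extract the isomorphism $(I_{X'})_j/(I_X+\langle h\rangle)_j \cong \Ann_{M_X}(h)_{j-k}$ from the long exact cohomology sequence. Your write-up is in fact slightly more careful than the paper's---you make explicit why $\mathcal{I}_X \cap h\,\mathcal{O}_{\PP^n}(-k) = h\,\mathcal{I}_X(-k)$ (namely that $h$ is a nonzerodivisor on the integral scheme $X$) and you flag the use of $H^1\bigl(\PP^n,\mathcal{O}_{\PP^n}(j-k)\bigr)=0$; the local-cohomology reformulation at the end is a nice alternative packaging of the same computation.
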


\begin{proof}
  By definition, the submodule $\Ann_{M_X}(h)$ fits into the exact sequence
  \[
  0 \longrightarrow \bigl(\Ann_{M_X}(h) \bigr)(-k) \longrightarrow M_X(-k)
  \xrightarrow{\;\; \cdot h \;\;} M_X \, .
  \]
  Sheafifying the canonical short exact sequence
  $0 \longrightarrow I_X \cap \langle h \rangle \longrightarrow I_X \oplus
  \langle h \rangle \longrightarrow I_X + \langle h \rangle \longrightarrow 0$
  and taking cohomology of appropriate twists produces the long exact sequence
  \[
  0 \longrightarrow I_X(-k) \xrightarrow{\;\; \left[ 
      \begin{smallmatrix} 
        h \\ h 
      \end{smallmatrix} \right] \;\;} I_X \oplus \langle h \rangle
  \longrightarrow I_{X'} \longrightarrow M_X(-k) \xrightarrow{\;\; \cdot h
    \;\;} M_X \, .
  \]
  Breaking this long exact sequence into short exact sequences, we obtain
  \[
  0 \longrightarrow I_X + \langle h \rangle \longrightarrow I_{X'}
  \longrightarrow \bigl( \Ann_{M_X}(h) \bigr)(-k) \longrightarrow 0 \, .
  \]
  Thus, we have
  $\bigl( \Ann_{M_X}(h) \bigr)(-k) \cong I_{X'}/(I_X + \langle h \rangle)$ and
  the required equivalence follows.
\end{proof}

As a consequence of Lemma~\ref{lem:radicality}, we see that some natural
geometric conditions imply that the ideal $I_X + \langle h \rangle$ is
saturated.

\begin{remark}
  \label{rem:ACM}
  A curve $X$ is projectively normal if and only if $M_X = 0$.  With this
  hypothesis, Lemma~\ref{lem:radicality} implies that we have
  $(I_{X'})_{j} = (I_X + \langle h \rangle)_{j}$ for all $j \in \ZZ$.  In
  particular, if $X$ is arithmetically Cohen--Macaulay, then the ideal
  $I_X + \langle h \rangle$ is saturated.
\end{remark}

The next result is the general form of our degree bound for the existence of
sum-of-squares multipliers on curves.

\begin{theorem}
  \label{thm:upperBound}
  Fix a positive integer $j$ and a nonnegative integer $k$.  Let
  $X \subseteq \PP^n$ be a totally-real curve such that
  $H^1\bigl( \PP^n, \mathcal{I}_{X}(j+k) \bigr) = 0$ and
  $\HF_X(2j+2k) < 2 \HF_X(j+k) + \HF_X(k) - 1$.  For all $f \in \Pos_{X,2j}$,
  there exists a nonzero $g \in \Sos_{X,2k}$ such that
  $f g \in \Sos_{X,2j+2k}$.
\end{theorem}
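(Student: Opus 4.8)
The plan is to argue by contradiction, converting the non-existence of a sum-of-squares multiplier into a separation statement, and then using the Bertini Theorem (Theorem~\ref{thm:Bertini}) to descend to a zero-dimensional scheme where Proposition~\ref{pro:zero} supplies the numerical obstruction that the hypothesis $\HF_X(2j+2k) < 2\HF_X(j+k) + \HF_X(k) - 1$ forbids. First I would observe that since $X$ is a totally-real curve, its coordinate ring $R$ is a domain, so every nonzero element of $R_{j+k}$ (and of $R_{2j}$) is a nonzerodivisor; in particular the multiplication map $\eta_g\colon R_{j+k}\to R_{2j+2k}$ is injective for all nonzero $g$, and the hypotheses of Theorem~\ref{thm:Bertini} are in force. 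Next, suppose for contradiction that no nonzero $g\in\Sos_{X,2k}$ satisfies $fg\in\Sos_{X,2j+2k}$. We may assume $f\neq 0$ (the degree-positive case), so $f$ is a nonzerodivisor. Then the cones $\Sos_{X,2j+2k}$ and $f\cdot\Sos_{X,2k}$ meet only at $0$. As in the proof of Theorem~\ref{thm:Bertini}, both cones are pointed (using Remark~\ref{rem:nonclosed} and the fact that $f$ is a nonzerodivisor), so intersecting only at $0$ is equivalent to being well-separated; hence there is a strict separator.

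Now I would apply Theorem~\ref{thm:Bertini}: there is a nonempty Euclidean-open subset of $\PP(R_{j+k})$ parametrizing hypersurface sections $X' = X\cap\variety(h)$ for which $\Sos_{X',2j+2k}$ and $f'\cdot\Sos_{X',2k}$ remain well-separated. I want to choose $h$ in this open set so that $X'$ is additionally a nice set of points: reduced, with the uniform position property, at least two points, with $f'$ positive on $X'(\RR)$ and non-vanishing on $X'(\CC)$, and—crucially—so that the Hilbert function of $X'$ equals that of $X$ in the relevant degrees. The degree-one hypersurface sections (hyperplane sections) of a nondegenerate curve enjoy the uniform position property by the Harris–Montreal theory, and generality guarantees $f'$ does not vanish on $X'(\CC)$ since $\variety(f)$ meets $X$ in only finitely many points. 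The condition $H^1(\PP^n,\mathcal I_X(j+k)) = 0$ together with Lemma~\ref{lem:radicality} (applied with the relevant twist, noting $\Ann_{M_X}(h)$ controls the failure of saturation) ensures that $I_X + \langle h\rangle$ agrees with $I_{X'}$ in degree $j+k$, and more generally lets me compare $\HF_{X'}$ with $\HF_X$: the hyperplane section of $X$ has Hilbert function equal to the first difference of $\HF_X$, and the $H^1$-vanishing pushes the stabilization so that $\HF_{X'}(i)$ equals $\deg X$ for $i\geq j+k$, while giving $\HF_{X'}(k),\HF_{X'}(j+k),\HF_{X'}(2j+2k)$ explicitly. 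Also $\deg X' = \deg X \geq 2$ since $X$ is nondegenerate in $\PP^n$ with $n\geq 2$... in fact $\deg X \geq n \geq 2$, so $X'$ has at least two points.

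With such an $X'$ in hand, Proposition~\ref{pro:zero}(ii) applies to the strict separator inherited on $X'$ and yields $\HF_{X'}(k) + \HF_{X'}(j+k)\leq\HF_{X'}(2j+2k)$. The final step is purely numerical: I translate this inequality back through the Hilbert-function comparison. Since $2j+2k\geq j+k$ (as $j\geq 1$), the $H^1$-vanishing forces $\HF_{X'}(2j+2k) = \HF_{X'}(j+k) = \deg X$, so $\HF_{X'}(k)\leq 0$, which is absurd unless one re-examines the small-$k$ case; more carefully, the inequality becomes $\HF_X(k) + \HF_X(j+k) - \text{(correction terms)}\leq \HF_X(2j+2k) - \text{(correction terms)}$, and chasing through, the hypothesis $\HF_X(2j+2k) < 2\HF_X(j+k) + \HF_X(k) - 1$ is exactly what is contradicted (the $-1$ absorbing the ceiling/parity slack in Proposition~\ref{pro:zero}). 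That contradiction proves the theorem.

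\emph{Main obstacle.} The delicate point is the simultaneous control of the hypersurface section $X'$: arranging in a single nonempty Euclidean-open set that $X'$ is reduced with the uniform position property, that $f'$ is positive on $X'(\RR)$ and nonvanishing on $X'(\CC)$, that $I_X+\langle h\rangle$ is saturated in the relevant degrees (so the Hilbert-function bookkeeping is valid), \emph{and} that $X'$ inherits the strict separator from Theorem~\ref{thm:Bertini}. Each condition is open and nonempty on its own, but one must verify they are compatible—in particular that the $H^1$-vanishing hypothesis is genuinely needed to make $\HF_{X'}$ computable in terms of $\HF_X$, and that the resulting inequality from Proposition~\ref{pro:zero}(ii) is sharp enough to clash with the stated numerical hypothesis rather than being vacuously true.
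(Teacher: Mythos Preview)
Your approach is the paper's: argue by contradiction, descend via Theorem~\ref{thm:Bertini} to a general degree-$(j+k)$ hypersurface section $X'$, apply Proposition~\ref{pro:zero}(ii), and extract a numerical contradiction. The structure is right, but the numerical step contains real errors that must be fixed before the argument closes.

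The claim $\HF_{X'}(2j+2k) = \HF_{X'}(j+k) = \deg X$ is wrong on both counts: $X'$ consists of $(j+k)\deg X$ points, not $\deg X$, and there is no reason for $\HF_{X'}$ to have stabilized already at degree $j+k$. Your fallback ``more carefully, \dots (correction terms)'' is too vague to be a proof; inequalities of the form $\HF_{X'}(i) \leq \HF_X(i)$ go the wrong way to substitute into the left side of $\HF_{X'}(k) + \HF_{X'}(j+k) \leq \HF_{X'}(2j+2k)$. What you need are \emph{equalities}. Since $X$ is an integral curve, the long exact sequence of $0 \to \mathcal{I}_X \to \mathcal{O}_{\PP^n} \to \mathcal{O}_X \to 0$ gives $(M_X)_i = H^1(\PP^n,\mathcal{I}_X(i)) = 0$ for all $i \leq 0$. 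Combined with the hypothesis $(M_X)_{j+k} = 0$, Lemma~\ref{lem:radicality} (with $\deg h = j+k$) forces $(I_{X'})_m = (I_X + \langle h\rangle)_m$ for each $m \in \{k,\, j+k,\, 2j+2k\}$, since the relevant annihilator pieces sit in degrees $-j$, $0$, and $j+k$ of $M_X$. This yields
\[
\HF_{X'}(k) = \HF_X(k), \qquad \HF_{X'}(j+k) = \HF_X(j+k) - 1, \qquad \HF_{X'}(2j+2k) = \HF_X(2j+2k) - \HF_X(j+k),
\]
and substituting into Proposition~\ref{pro:zero}(ii) gives $\HF_X(k) + 2\HF_X(j+k) - 1 \leq \HF_X(2j+2k)$, contradicting the hypothesis. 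The $-1$ comes from $\HF_X(0) = 1$ (the constants killed when you quotient by $h$), not from any ceiling in Proposition~\ref{pro:zero}; part~(i) with its $\lceil\cdot\rceil$ is never used here.

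One smaller point: uniform position for $X'$ holds because a degree-$(j+k)$ hypersurface section of $X$ is a hyperplane section of its $(j+k)$-uple Veronese re-embedding, to which the Harris uniform-position theorem applies directly; your reference to ``degree-one hypersurface sections'' should be read through that re-embedding.
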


\begin{proof}
  We start by reinterpreting the non-existence of a suitable multiplier
  $g \in \Sos_{X,2k}$ as the existence of a strict separator between
  appropriate cones.  Corollary~\ref{cor:variety} implies that the cones
  $\Sos_{X,2j+2k}$ and $\Sos_{X,2k}$ are pointed.  If $f = 0$, then the
  conclusion is trivial, so we may assume that $f$ is nonzero.  It follows
  that $f$ is a nonzerodivisor because $X$ is integral.  Since the map
  $\eta_f \colon R_{2k} \to R_{2j+2k}$ is injective, the pointed cone
  $\Sos_{X,2k}$ is isomorphic to the cone $f \cdot \Sos_{X,2k}$.  Hence, the
  non-existence of a nonzero $g \in \Sos_{X,2k}$ such that
  $fg \in \Sos_{X,2j+2k}$ is equivalent to saying that the cones
  $\Sos_{X,2j+2k}$ and $f \cdot \Sos_{X,2k}$ are well-separated.

  To complete the proof, we reduce to the case of points by using new and old
  Bertini Theorems.  Our convex variant, Theorem~\ref{thm:Bertini}, implies
  that the set of homogeneous polynomials $h \in S_{j+k}$, such that
  $h \not\in I_X$, $X' \coloneq X \cap \variety(h) \subset \PP^n$, and the
  cones $\Sos_{X',2j+2k}$ and $f' \cdot \Sos_{X',2k}$ are well-separated,
  contains a nonempty Euclidean open subset $U_1 \subseteq \PP(R_{j+k})$.  The
  classic version of Bertini's Theorem (see Th\'eor\`eme~6.3 in \cite{J})
  shows that there is a nonempty Zariski open subset
  $U_2 \subseteq \PP(R_{j+k})$ such that, for all $[h] \in U_2$, the
  hypersurface section $X'$ is a reduced set of points and $f$ does not vanish
  at any point in $X'$.  Moreover, our hypothesis that $(M_X)_{j+k} = 0$
  combined with Lemma~\ref{lem:radicality} establishes that there exists
  another nonempty Zariski open subset $U_3 \subseteq \PP(R_{j+k})$ such that,
  for all $[h] \in U_3$, we have
  $(I_{X'})_{2j+2k} = (I_X + \langle h \rangle)_{2j+2k}$, which implies that
  $\HF_{X'}(2j+2k) = \HF_X(2j+2k) - \HF_X(j+k)$.  The triple intersection
  $U_1 \cap U_2 \cap U_3$ is nonempty, so Proposition~\ref{pro:zero}~(ii)
  yields the inequality $\HF_{X'}(k) + \HF_{X'}(j+k) \leq \HF_{X'}(2j+2k)$.
  By construction, we have $\HF_{X'}(j+k) \leq \HF_X(j+k) - 1$ and
  $\HF_{X'}(i) \leq \HF_X(i)$ for all $i < j + k$.  Therefore, we conclude
  that $\HF_X(k) + 2 \HF_X(j+k) -1 \leq \HF_X(2j+2k)$ when the cones
  $\Sos_{X,2j+2k}$ and $f \cdot \Sos_{X,2k}$ are well-separated.
\end{proof}

The hypothesis in Theorem~\ref{thm:upperBound} may be recast using alternative
numerical invariants.  With this in mind, set
$\ee_i(X) \coloneq \max\{ i \in \ZZ : H^i\bigl( \PP^n, \mathcal{I}_X(i) \bigr) \neq
0 \}$,
so that the Castelnuovo--Mumford regularity of $\mathcal{I}_X$ equals
$\max\{ \ee_i(X)+i : i \in \ZZ\}$; compare with Theorem~4.3 in \cite{EisSyz}.

\begin{corollary}
  \label{cor:tripleMax}
  Fix a positive integer $j$ and a nonnegative integer $k$. Let
  $X \subseteq \PP^n$ be a totally-real curve of degree $d$ and arithmetic
  genus $\genus$, and assume that
  \[
  k \geq \max\bigl\{ \ee_1(X)+1, \tfrac{1}{2} \ee_2(X) + \tfrac{1}{2} - j,
  \tfrac{2 \genus}{d} \bigr\} \, .
  \]
  For all $f \in \Pos_{X,2j}$, there exists a nonzero $g \in \Sos_{X,2k}$ such
  that $f g \in \Sos_{X,2j+2k}$.
\end{corollary}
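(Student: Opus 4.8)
The plan is to obtain Corollary~\ref{cor:tripleMax} as an immediate consequence of Theorem~\ref{thm:upperBound}: I will check that the single inequality $k \geq \max\bigl\{ \ee_1(X)+1, \tfrac12 \ee_2(X) + \tfrac12 - j, \tfrac{2\genus}{d} \bigr\}$ forces both hypotheses of that theorem, namely $H^1\bigl(\PP^n, \mathcal{I}_X(j+k)\bigr) = 0$ and $\HF_X(2j+2k) < 2\HF_X(j+k) + \HF_X(k) - 1$. Since $j \geq 1$ and $k \geq \ee_1(X)+1$, we have $j + k > \ee_1(X)$, so the first hypothesis holds by the very definition of $\ee_1(X)$.

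For the second hypothesis I would first record the usual cohomological expression for the Hilbert function of an integral curve. Twisting the structure sequence $0 \to \mathcal{I}_X \to \mathcal{O}_{\PP^n} \to \mathcal{O}_X \to 0$, using that $H^1\bigl(\PP^n, \mathcal{O}_{\PP^n}(t)\bigr) = 0$ because $n \geq 2$, and invoking Riemann--Roch $\chi\bigl(\mathcal{O}_X(t)\bigr) = dt + 1 - \genus$ on the curve $X$, one obtains, for every $t \geq 0$,
\[
\HF_X(t) = dt + 1 - \genus + h^1\bigl(X, \mathcal{O}_X(t)\bigr) - h^1\bigl(\PP^n, \mathcal{I}_X(t)\bigr) .
\]
Each of $k$, $j+k$, and $2j+2k$ is at least $k$, hence exceeds $\ee_1(X)$, so the last term vanishes for these three values of $t$; this already gives the lower bounds $\HF_X(k) \geq dk + 1 - \genus$ and $\HF_X(j+k) \geq d(j+k) + 1 - \genus$. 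To bound $\HF_X(2j+2k)$ from above I also need $h^1\bigl(X, \mathcal{O}_X(2j+2k)\bigr) = 0$. This follows either from the inclusion $H^1\bigl(X, \mathcal{O}_X(t)\bigr) \hookrightarrow H^2\bigl(\PP^n, \mathcal{I}_X(t)\bigr)$ together with the inequality $2j + 2k > \ee_2(X)$, which merely restates $k \geq \tfrac12\ee_2(X) + \tfrac12 - j$; or from Serre duality, since $h^1\bigl(X, \mathcal{O}_X(t)\bigr) = h^0\bigl(X, \omega_X(-t)\bigr)$ and this vanishes once $\deg\bigl(\omega_X(-t)\bigr) = 2\genus - 2 - dt < 0$, which holds as $d(2j+2k) \geq dk \geq 2\genus$ by $k \geq \tfrac{2\genus}{d}$. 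Either way, $\HF_X(2j+2k) = d(2j+2k) + 1 - \genus = 2d(j+k) + 1 - \genus$.

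Assembling these estimates gives $2\HF_X(j+k) + \HF_X(k) - 1 \geq 2d(j+k) + dk + 2 - 3\genus$, so the desired inequality $\HF_X(2j+2k) < 2\HF_X(j+k) + \HF_X(k) - 1$ reduces to $2d(j+k) + 1 - \genus < 2d(j+k) + dk + 2 - 3\genus$, that is, to $dk > 2\genus - 1$; since $d$ and $k$ are integers with $k \geq \tfrac{2\genus}{d}$, we have $dk \geq 2\genus$, which suffices. Both hypotheses of Theorem~\ref{thm:upperBound} are then verified, and it produces the nonzero $g \in \Sos_{X,2k}$ with $fg \in \Sos_{X,2j+2k}$. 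I expect the only point needing genuine care to be the vanishing of $h^1\bigl(X, \mathcal{O}_X(2j+2k)\bigr)$ for curves with singularities, where one must invoke the dualizing sheaf $\omega_X$ (or argue via $\ee_2(X)$); everything else is routine Riemann--Roch bookkeeping together with a direct comparison of the three Hilbert numbers.
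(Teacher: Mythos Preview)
Your proof is correct and follows essentially the same route as the paper: both verify the two hypotheses of Theorem~\ref{thm:upperBound} by using the cohomological identity $\HF_X(i) - \HP_X(i) = h^2\bigl(\PP^n,\mathcal{I}_X(i)\bigr) - h^1\bigl(\PP^n,\mathcal{I}_X(i)\bigr)$ (your $h^1\bigl(X,\mathcal{O}_X(i)\bigr)$ is exactly the paper's $h^2\bigl(\PP^n,\mathcal{I}_X(i)\bigr)$ via the structure sequence), obtaining $\HF_X \geq \HP_X$ past $\ee_1(X)$ and equality past $\max\{\ee_1(X),\ee_2(X)\}$, and then reducing the Hilbert-function inequality to $dk \geq 2\genus$. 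The only cosmetic difference is that you carry out the Riemann--Roch bookkeeping explicitly rather than citing the formula for $\HF_X - \HP_X$; your caution about the Serre-duality alternative on singular curves is well placed, but the $\ee_2(X)$ argument you give first already settles the point cleanly.
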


\begin{proof}
  When $j + k \geq \ee_1(X) + 1$, we have
  $H^1\bigl( \PP^, \mathcal{I}_X(j+k) \bigr) = (M_X)_{j+k} = 0$; compare with
  Remark~\ref{rem:ACM}.  The Hilbert polynomial of $X$ equals
  $\HP_X(i) = di + (1-\genus)$ and satisfies
  \[
  \HF_X(i) - \HP_X(i) = \dim H^2\bigl(\PP^2, \mathcal{I}_X(i) \bigr) - \dim
  H^1\bigl(\PP^2, \mathcal{I}_X(i) \bigr) \, ,
  \]
  so we see that $\HF_X(i) = \HP_X(i)$ for all
  $i > \max\{ \ee_1(X), \ee_2(X) \}$ and $\HF_X(i) \geq \HP_X(i)$ for all
  $i > \ee_1(X)$.  Hence, if $k \geq \ee_1(X) + 1$ and
  $2j +2k \geq \ee_2(X) + 1$, then the inequality $k \geq \frac{2 \genus}{d}$
  or $k > \frac{2 \genus -1}{d}$ is equivalently to
  $\HP_X(2j+2k) < 2 \HP_X(j+k) + \HP_X(k) - 1$ and
  $\HF_X(2j+2k) < 2 \HF_X(j+k) + \HF_X(k) - 1$.  Therefore,
  Theorem~\ref{thm:upperBound} establishes the corollary.
\end{proof}

For a second version, set $\rr(X) \coloneq \min\{ j \in \ZZ :
\text{$\HF_X(i) = \HP_X(i)$ for all $i \geq j$} \}$ where $\HP_X(i)$ denotes
the Hilbert polynomial of $X$.  This numerical invariant is sometimes called
the Hilbert regularity of $X$ or the index of regularity for $X$.  A curve
$X \subset \PP^n$ is nondegenerate if it is not contained in a hyperplane.

\begin{corollary}
  \label{cor:hilbertRegularity}
  Fix a positive integer $j$ and a nonnegative integer $k$.  Let
  $X \subset \PP^n$ be a nondegenerate totally-real curve of degree $d$ and
  arithmetic genus $\genus$, and assume that
  $k \geq \max\bigl\{ \rr(X), \frac{2 \genus}{d} \bigr\}$.  For all
  $f \in \Pos_{X,2j}$, there is a nonzero $g \in \Sos_{X,2k}$ such that
  $fg \in \Sos_{X,2j+2k}$.
\end{corollary}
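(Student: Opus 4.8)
The plan is to deduce Corollary~\ref{cor:hilbertRegularity} from Corollary~\ref{cor:tripleMax} by checking that the single hypothesis $k \geq \max\bigl\{ \rr(X), \tfrac{2\genus}{d} \bigr\}$ already implies the three-term hypothesis $k \geq \max\bigl\{ \ee_1(X)+1, \tfrac{1}{2}\ee_2(X)+\tfrac{1}{2}-j, \tfrac{2\genus}{d} \bigr\}$. The term $\tfrac{2\genus}{d}$ is common to both, and since $j$ is a positive integer, the inequality $k \geq \ee_2(X)+1$ implies $k \geq \tfrac{1}{2}\ee_2(X)+\tfrac{1}{2}-j$ (the right-hand side is at most $\ee_2(X)+1$ when $\ee_2(X) \geq -1$ and is negative when $\ee_2(X) \leq -2$). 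Hence it suffices to prove that $k \geq \ee_1(X)+1$ and $k \geq \ee_2(X)+1$.

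To obtain these, I would combine the identity $\HF_X(m) - \HP_X(m) = \dim H^2\bigl(\PP^n, \mathcal{I}_X(m)\bigr) - \dim H^1\bigl(\PP^n, \mathcal{I}_X(m)\bigr)$ recorded in the proof of Corollary~\ref{cor:tripleMax} with two standard facts about curves. First, multiplying by a general linear form shows that $\dim H^2\bigl(\PP^n, \mathcal{I}_X(m)\bigr)$ is non-increasing in $m$, so it is nonzero precisely when $m \leq \ee_2(X)$. Second, because $X$ is an integral curve, Serre duality identifies $H^2\bigl(\PP^n, \mathcal{I}_X(m)\bigr) \cong H^1\bigl(X, \mathcal{O}_X(m)\bigr)$ (for $n \geq 3$; the case of a plane curve is elementary) with the dual of $H^0\bigl(X, \omega_X(-m)\bigr)$, and a rank-one torsion-free sheaf of negative degree on an integral projective curve has no nonzero global section; hence $H^2\bigl(\PP^n, \mathcal{I}_X(m)\bigr) \neq 0$ forces $0 \leq \deg\bigl(\omega_X(-m)\bigr) = 2\genus-2-dm$, that is, $m \leq \tfrac{2\genus-2}{d}$. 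Now set $m_0 := \max\{\ee_1(X), \ee_2(X)\}$, with the convention $\ee_1(X) = -\infty$ when $X$ is arithmetically Cohen--Macaulay. If $\ee_1(X) \neq \ee_2(X)$, then exactly one of $H^1\bigl(\PP^n, \mathcal{I}_X(m_0)\bigr)$ and $H^2\bigl(\PP^n, \mathcal{I}_X(m_0)\bigr)$ is nonzero, so $\HF_X(m_0) \neq \HP_X(m_0)$ and therefore $\rr(X) \geq m_0+1 \geq \ee_i(X)+1$ for $i \in \{1,2\}$; since $k \geq \rr(X)$, we are done in this case. If instead $\ee_1(X) = \ee_2(X) = m_0$, then the nonvanishing of $H^2\bigl(\PP^n, \mathcal{I}_X(m_0)\bigr)$ yields $m_0 \leq \tfrac{2\genus-2}{d} < \tfrac{2\genus}{d}$, and since $k$ is an integer with $k \geq \tfrac{2\genus}{d} > m_0$ we again obtain $k \geq m_0+1 = \ee_i(X)+1$.

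In every case $k \geq \ee_1(X)+1$ and $k \geq \ee_2(X)+1$; together with $k \geq \tfrac{2\genus}{d}$ and the reduction in the first paragraph, all hypotheses of Corollary~\ref{cor:tripleMax} hold, so the desired $g \in \Sos_{X,2k}$ exists. I expect the main obstacle to be the case $\ee_1(X) = \ee_2(X)$: there the naive comparison $\rr(X) \geq \ee_1(X)+1$ can genuinely fail, because the two cohomological contributions to $\HF_X(m_0) - \HP_X(m_0)$ may cancel, and this is exactly why the $\tfrac{2\genus}{d}$ clause is present in the statement---the dualizing-sheaf bound $m_0 \leq \tfrac{2\genus-2}{d}$ together with the integrality of $k$ is what salvages the argument. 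A secondary point needing a little care is the bookkeeping around the convention $\ee_1(X) = -\infty$ for Cohen--Macaulay curves and around small or negative values of $\ee_2(X)$.
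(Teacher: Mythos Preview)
Your argument is correct, but it takes a different route from the paper. The paper does not pass through Corollary~\ref{cor:tripleMax}; instead it verifies the hypotheses of Theorem~\ref{thm:upperBound} directly. For the vanishing $H^1\bigl(\PP^n,\mathcal{I}_X(j+k)\bigr)=0$, the paper re-embeds $X$ by the complete linear series $|\mathcal{O}_X(j+k)|$ into $\PP^{d(j+k)-\genus}$, invokes the General Position Theorem for a generic hyperplane section, and uses the Castelnuovo lemma from \cite{ACGH} to deduce $2$-regularity of that section; a short exact sequence then gives the desired vanishing. The Hilbert-function inequality is checked straight from $k\geq\rr(X)$ and $k\geq\tfrac{2\genus}{d}$.

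Your approach is more cohomological: you bound $\ee_2(X)$ by $\tfrac{2\genus-2}{d}$ via Serre duality and the vanishing of global sections of a negative-degree rank-one torsion-free sheaf, and then handle the potential cancellation $h^2-h^1=0$ at $m_0$ by the case split $\ee_1(X)\neq\ee_2(X)$ versus $\ee_1(X)=\ee_2(X)$. This avoids the General Position Theorem and regularity machinery entirely, and in fact never uses the nondegeneracy hypothesis (Corollary~\ref{cor:tripleMax} does not require it). The paper's route, by contrast, is more geometric and makes essential use of nondegeneracy in the re-embedding step. Both arguments are clean; yours proves a nominally stronger intermediate statement (bounds on both $\ee_i(X)$ rather than the single vanishing $H^1\bigl(\mathcal{I}_X(j+k)\bigr)=0$), at the cost of the case analysis you flagged.
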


\begin{proof}
  The inequalities $k \geq \rr(X)$ and $k \geq \frac{2 \genus}{d}$ yield
  $\HF_X(j+k) = \HP_X(j+k) = d(j+k) + (1-\genus)$ and
  $d(j+k) - \genus = dj + dk - \genus \geq dj + \genus \geq 2$ respectively.
  By hypothesis, the line bundle $\mathcal{O}_X(1)$ is very ample and
  $j+k \geq 1$, so the complete linear series $|\mathcal{O}_{X}(j+k)|$ defines
  a closed immersion $\varphi \colon X \to \PP^{d(j+k) - \genus}$.  If
  $Y \coloneq \varphi(X)$, then a generic hyperplane section $Y'$ of the curve $Y$
  consists of $d(j+k)$ points, any $d(j+k) - \genus$ of which are linearly
  independent; see the General Position Theorem on page~109 in \cite{ACGH}.
  Employing the inequality $k \geq \frac{2 \genus}{d}$ as second time, we
  observe that
  $d(j+k) < d(j+k) + (dk - 2 \genus) + dj - 1 = 2 \bigl( d(j+k) - \genus - 1
  \bigr) + 1$.
  Hence, the Lemma on page~115 in \cite{ACGH} establishes that the points in
  $Y'$ impose independent conditions on homogeneous polynomials of degree $2$,
  and Corollary~4.7 in \cite{EisSyz} shows that $\mathcal{I}_{Y'}$ is
  $2$-regular.  In particular, we obtain
  $H^1\bigl( \PP^{d(j+k) - \genus}, \mathcal{I}_{Y'}(1) \bigr) = 0$.  Since
  $X$ is nondegenerate, the curve $Y$ is also nondegenerate and we also have
  $H^1( \PP^{d(j+k) - \genus}, \mathcal{I}_{Y}) = 0$.  The long exact sequence
  in cohomology arising from the short exact sequence
  $0 \longrightarrow \mathcal{I}_{Y}(-1) \longrightarrow \mathcal{I}_Y
  \longrightarrow \mathcal{I}_{Y'} \longrightarrow 0$
  implies that
  $0 = H^1\bigl( \PP^{d(j+k) - \genus}, \mathcal{I}_{Y}(1) \bigr) = H^1\bigl(
  \PP^{n}, \mathcal{I}_{X}(j+k) \bigr)$.
  Since $k \geq \rr(X)$, we have $\HF_X(i) = \HP_X(i)$ for all $i \geq k$ and
  the inequality $k \geq \frac{2 \genus}{d}$ is equivalent to
  $\HF_X(2j+2k) < 2 \HF_X(j+k) + \HF_X(k) - 1$, as in the proof of
  Corollary~\ref{cor:tripleMax}.  Therefore, Theorem~\ref{thm:upperBound}
  establishes the corollary.
\end{proof}

We illustrate these corollaries for two classic families of curves.

\begin{example}[Complete intersection curves]
  \label{exa:CI}
  Consider a totally-real complete intersection curve $X \subseteq \PP^n$ cut
  out by forms of degree $d_1, d_2, \dotsc, d_{n-1}$ for $1 \leq i \leq n-1$
  where at least one $d_i$ is greater than $1$.  This curve is arithmetically
  Cohen--Macaulay, so $\ee_1(X) = - \infty$; compare with
  Remark~\ref{rem:ACM}.  By breaking the minimal free resolution of
  $\mathcal{I}_X$ (which is a Koszul complex) into short exact sequences and
  knowing the cohomology of line bundles on projective space, we deduce that
  $\rr(X) = \ee_2(X) = d_1 + d_2 + \dotsb + d_{n-1}- n - 1$.  As in
  Example~1.5.1 in \cite{Migliore}, the degree of $X$ is
  $d_1 d_2 \dotsb d_{n-1}$ and the arithmetic genus is
  $\tfrac{1}{2} ( d_1 d_2 \dotsb d_{n-1})( d_1 + d_2 + \dotsb + d_{n-1} -n-1 )
  +1$.
  Assuming that $k \geq d_1 + d_2 + \dotsb + d_{n-1} -n$,
  Corollary~\ref{cor:tripleMax} or Corollary~\ref{cor:hilbertRegularity}
  establish that, for all $f \in \Pos_{X,2j}$, there exists a nonzero
  $g \in \Sos_{X,2k}$ such that $fg \in \Sos_{X,2j+2k}$.  \hfill $\diamond$
\end{example}

\begin{example}[Planar curves]
  \label{exa:planar}
  If $X \subset \PP^2$ is a planar curve of degree $d$ at least $2$ and
  $k \geq d-2$, then Example~\ref{exa:CI} implies that, for all
  $f \in \Pos_{X,2j}$, there is a nonzero $g \in \Sos_{X,2k}$ such that
  $fg \in \Sos_{X,2j+2k}$.  \hfill $\diamond$
\end{example}

Although Example~\ref{exa:deltoid} shows that this degree bound from
Corollary~\ref{cor:hilbertRegularity} is sharp on some planar curves, the next
example demonstrates that this is not always the case.  Moreover, it
illustrates how our techniques yield sharper bounds when the convex algebraic
geometry of the underlying variety is well understood.

\begin{example}[Non-optimality for planar curves]
  \label{exa:nonoptimal}
  Let $X \subset \PP^2$ be a rational quartic curve with a real
  parametrization and a real triple point.  For instance, the curve $X$ could
  be the image of the map
  $[x_0 : x_1] \mapsto [x_0^2 x_1^{} (x_0^{} - x_1^{}) : x_0^{} x_1^2 (x_0^{}
  - x_1^{}) : x_0^4 + x_1^4]$ where $[1:0], [0:1], [1:1] \in \PP^1$ are all
  sent to $[0:0:1] \in \PP^2$; this curve has degree $4$, arithmetic genus
  $3$, and $\rr(X) = 2$.  We claim that, for all $f \in \Pos_{X,2}$, there
  exists a nonzero $g \in \Sos_{X,2}$ such that $fg \in \Sos_{X,4}$.

  We first reduce the claim to showing that a generic linear functional
  $\ell \in R_4^*$ can be written as conjugate-invariant linear combination of
  at most $8$ point evaluations on $X(\CC)$.  If the claim is false, then
  there exists a linear functional $\ell \in R_4^*$ that strictly separates
  $\Sos_{X,4}$ and $f \cdot \Sos_{X,2}$.  We may assume that $\ell \in R_4^*$
  is a generic linear functional because $\Sos_{X,4}$ and $f \cdot \Sos_{X,2}$
  are pointed cones. Since $\HF_X(2) = 6$ and $\HF_X(1) = 3$, the affine hulls
  of $\Sos_{X,4}$ and $f \cdot \Sos_{X,2}$ have dimension $6$ and $3$
  respectively.  As analysis of symmetric forms arising from point evaluations
  appearing in the proof of Proposition~\ref{pro:zero} indicates, the number
  of real point evaluations with positive coefficients plus the number of
  pairs of complex point evaluations is at least $6$ and the number of real
  point evaluations with negative coefficients plus the number of pairs of
  complex point evaluations is at least $3$.  However, if $\ell \in R_4^*$ is
  a conjugate-invariant linear combination of at most $8$ point evaluations,
  then we obtain a contradiction.

  It remains to show that a generic linear functional $\ell \in R_4^*$ is a
  conjugate-invariant linear combination of at most $8$ point evaluations on
  $X(\CC)$.  The curve $X$ is a projection of the rational normal quartic
  curve $\breve{X} \subset \PP^4$.  It follows that there is a linear
  surjection $\rho \colon \breve{R}_4^* \to R_4^*$ sending point evaluations
  on $\breve{X}$ to point evaluations on $X$.  Hence, it suffices to prove
  that, for a generic $\ell \in R_4^*$, there exists a linear functional
  $\breve{\ell} \in \rho^{-1}(\ell)$ that is a conjugate-invariant linear
  combination of at most $8$ points evaluations on $\breve{X}(\CC)$.

  By construction, the $\RR$-vector space $\breve{R}_4$ is isomorphic to
  $\RR[x_0,x_1]_{16}^{}$.  Thus, a generic linear functional
  $\breve{\ell} \in \breve{R}_4$ can be written as a conjugate-invariant
  linear combination of at most $9$ point evaluations; see Lemma~1.33 in
  \cite{IK}.  Moreover, $\breve{\ell} \in \breve{R}_4$ can be written as a
  conjugate-invariant linear combination of $9$ point evaluations if and only
  if the corresponding $(8 \times 8)$-catelecticant matrix is invertible; see
  Theorem~1.44 or the second paragraph on page~28 in \cite{IK}. A general
  element in $\breve{R}_4$, for which the corresponding
  $(8 \times 8)$-catelecticant matrix is not invertible, is a
  conjugate-invariant linear combination of $8$ point evaluations.  Hence, it
  is enough to show that there exists $\breve{\ell} \in \rho^{-1}(\ell)$ for
  which the corresponding catelecticant is not invertible.  Since three points
  of $\breve{X}$ are mapped to the same point in $X$, there exists a linear
  functional $\breve{\ell}' \in \rho^{-1}(0)$ such that the corresponding
  $(8 \times 8)$-catelecticant matrix has rank $3$; compare with Theorem~1.43
  in \cite{IK}.  Choose an arbitrary linear functional
  $\breve{\ell}'' \in \rho^{-1}(\ell)$ and consider the pencil
  $\breve{\ell}'' + \lambda \breve{\ell}'$ where $\lambda \in \RR$.  The
  determinant of the $(8 \times 8)$-catelecticant matrix corresponding to
  $\breve{\ell}'' + \lambda \breve{\ell}'$ is a polynomial of degree $3$ in
  $\lambda$.  Since every real polynomial of degree $3$ has at least one real
  root, we conclude that there is a value for $\lambda \in \RR$ such that the
  linear functional
  $\breve{\ell} \coloneq \breve{\ell}'' + \lambda \breve{\ell}'$ is a
  conjugate-invariant linear combination of at most $8$ points evaluations on
  $\breve{X}(\CC)$.  \hfill $\diamond$
\end{example}

For a nondegenerate curve, we also give a uniform bound depending only on the
degree.

\begin{corollary}
  \label{cor:degreeOnly}
  Fix a positive integer $j$ and a nonnegative integer $k$.  Let
  $X \subset \PP^n$ be a nondegenerate totally-real curve of degree $d$, and
  assume that $k \geq d-n+1$.  For all $f \in \Pos_{X,2j}$, there exists a
  nonzero $g \in \Sos_{X,2k}$ such that $f g \in \Sos_{X,2j+2k}$.
\end{corollary}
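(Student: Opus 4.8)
The plan is to deduce the statement from Corollary~\ref{cor:hilbertRegularity} by checking that the hypothesis $k \geq d-n+1$ already forces $k \geq \max\bigl\{ \rr(X), \tfrac{2\genus}{d} \bigr\}$, where $\genus$ denotes the arithmetic genus of $X$. Since $X$ is nondegenerate, the classical bound $d \geq \codim(X)+1 = n$ holds, so $d-n+1$ is a positive integer and the hypothesis is not vacuous.

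First I would bound the index of regularity. By the Gruson--Lazarsfeld--Peskine theorem, the Castelnuovo--Mumford regularity of the ideal sheaf $\mathcal{I}_X$ of the integral nondegenerate curve $X$ is at most $d-n+2$; regularity is insensitive to the flat base change $\RR \to \CC$, and for $n=2$ this is the obvious bound for a hypersurface of degree $d$. It follows that $\ee_1(X) \leq d-n$ and $\ee_2(X) \leq d-n-1$. As recorded in the proof of Corollary~\ref{cor:tripleMax}, we have $\HF_X(i) = \HP_X(i)$ for every $i > \max\{\ee_1(X), \ee_2(X)\}$, so $\rr(X) \leq d-n+1 \leq k$.

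Next I would bound the genus. Castelnuovo's inequality gives $\genus \leq \binom{m}{2}(n-1) + m\epsilon$, where $d-1 = m(n-1) + \epsilon$ with $0 \leq \epsilon \leq n-2$ and $m \geq 1$ because $d \geq n$. Writing $d-n+1 = (m-1)(n-1) + \epsilon + 1$ and comparing the two expressions term by term shows that this Castelnuovo bound is at most $\tfrac{1}{2} d(d-n+1)$; hence $\tfrac{2\genus}{d} \leq d-n+1 \leq k$. Combining this with the previous paragraph gives $k \geq \max\bigl\{ \rr(X), \tfrac{2\genus}{d} \bigr\}$, and Corollary~\ref{cor:hilbertRegularity} then produces the desired nonzero $g \in \Sos_{X,2k}$ with $fg \in \Sos_{X,2j+2k}$.

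The only genuinely delicate ingredient is the regularity estimate $\rr(X) \leq d-n+1$: one must invoke the Gruson--Lazarsfeld--Peskine bound (or re-run the General Position Theorem argument from the proof of Corollary~\ref{cor:hilbertRegularity}) and carefully track the off-by-one shift between Castelnuovo--Mumford regularity and the index of regularity. The genus estimate is an elementary, if slightly tedious, consequence of Castelnuovo's classical bound, and the remainder is bookkeeping around Corollary~\ref{cor:hilbertRegularity}.
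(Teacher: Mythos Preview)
Your argument is correct and follows essentially the same route as the paper's proof: both use the Gruson--Lazarsfeld--Peskine bound on the regularity of $\mathcal{I}_X$ together with a Castelnuovo-type bound on the arithmetic genus, and then feed these into one of the two preceding corollaries. The only cosmetic differences are that the paper applies Corollary~\ref{cor:tripleMax} directly (working with $\ee_1(X)$ and $\ee_2(X)$ rather than passing to $\rr(X)$) and cites Nagel's genus bound in place of the classical Castelnuovo inequality; your route through Corollary~\ref{cor:hilbertRegularity} and the classical bound is equally valid, and your verification that $\binom{m}{2}(n-1)+m\epsilon \leq \tfrac{1}{2}d(d-n+1)$ is straightforward once one substitutes $d = m(n-1)+\epsilon+1$.
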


\begin{proof}
  Theorem~1.1 in \cite{GLP} proves that the Castelnuovo--Mumford regularity of
  $\mathcal{I}_X$ is at most $d-n+2$, so we have $\ee_1(X) \leq d-n$ and
  $\ee_2(X) \leq d-n-1$.  Theorem~3.2 in \cite{Nagel} establishes that
  \[
  \genus \leq \begin{cases}
    \binom{d-2}{2} -(n-3) & \text{if $d \geq 3$} \\
    2-n & \text{if $d = 2$,}
  \end{cases}
  \]
  from which we conclude that $\tfrac{2 \genus}{d} \leq d-n+1$.  Thus, the
  claim follows from Corollary~\ref{cor:tripleMax}.
\end{proof}

When the Hilbert functions of iterated hypersurface sections can be
controlled, the techniques used to prove Theorem~\ref{thm:upperBound} also
apply to higher-dimensional varieties. If a homogeneous polynomial is strictly
positive on a totally-real variety, then the associated hypersurface section
has no real points.  Focusing on non-totally-real projective varieties is
unexpectedly the key insight needed to establish our higher-dimensional
results.

\begin{lemma}
  \label{lem:nontotallyreal}
  Fix a positive integer $j$, let $X \subseteq \PP^n$ be an $m$-dimensional
  variety that is not totally real, and assume that $X$ is arithmetically
  Cohen--Macaulay.  If $\HF_X(2j) < \binom{m+2}{1} \HF_X(j) - \binom{m+2}{2}$,
  then the cone $\Sos_{X,2j}$ contains a line.
\end{lemma}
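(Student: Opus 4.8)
The plan is to contrapose: assuming that $\Sos_{X,2j}$ is a pointed cone (equivalently, by Proposition~\ref{pro:pointed}, that no nontrivial sum of squares from $R_j$ vanishes), I want to produce the numerical lower bound $\HF_X(2j) \geq \binom{m+2}{1}\HF_X(j) - \binom{m+2}{2}$. The key geometric input is that $X$ is \emph{not} totally-real, so the real points $X(\RR)$ are not Zariski dense; their Zariski closure is a proper closed subscheme of $X$, and I would like to pick a general member of a suitable linear system that meets $X$ only away from $X(\RR)$, so that the hypersurface section has no real points. This is exactly the situation of Proposition~\ref{pro:zero}(i): a strict positivity functional on $\Sos$ of a pointset with no real points forces $\HF(k) \leq \lceil \tfrac12 \HF(2k)\rceil$.

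The execution would run as follows. First, since $\Sos_{X,2j}$ is pointed and full-dimensional, its dual $\Sos_{X,2j}^*$ is pointed and full-dimensional, hence has nonempty interior; pick $\ell$ in that interior, so $\ell$ is strictly positive on all nonzero elements of $\Sos_{X,2j}$, i.e. $\sigma_j^*(\ell)$ is positive-definite. Next, I would cut $X$ down to a set of points by iterated general hyperplane sections: choosing the $m$ hyperplanes generically (and, crucially, using that $X$ is not totally-real so we can also arrange the sections to avoid $X(\RR)$ — a generic hyperplane through a point off the closure of $X(\RR)$, or more simply a generic section, misses $X(\RR)$ once $\dim \overline{X(\RR)} < m$), I obtain a reduced zero-dimensional $X' \subseteq X$ with $X'(\RR) = \varnothing$. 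Because $X$ is arithmetically Cohen--Macaulay, Remark~\ref{rem:ACM} guarantees that these hyperplane sections behave well: $I_{X} + \langle h_1,\dots,h_m\rangle$ stays saturated, so $\HF_{X'}(i) = \sum_{t=0}^{m}(-1)^t\binom{m}{t}\HF_X(i-t)$, and in particular the restricted functional $\ell'$ on $R'_{2j}$ is still strictly positive on $\Sos_{X',2j}$. A general hyperplane section of an integral ACM variety also has the uniform position property (by the classical uniform position principle, cf. \cite{HarrisGenus}), so Proposition~\ref{pro:zero}(i) applies and gives $\HF_{X'}(j) \leq \lceil \tfrac12 \HF_{X'}(2j) \rceil$.

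Finally I would unwind the binomial bookkeeping. From $\HF_{X'}(j) \leq \lceil \tfrac12\HF_{X'}(2j)\rceil \leq \tfrac12(\HF_{X'}(2j)+1)$ we get $2\HF_{X'}(j) - 1 \leq \HF_{X'}(2j)$. Substituting the alternating-sum formula for $\HF_{X'}$ in degrees $j$ and $2j$ and telescoping — using that consecutive differences of $\HF_X$ along an ACM variety are controlled — should yield precisely $\HF_X(2j) \geq (m+1)\,\HF_X(j) - \binom{m+1}{2}$... wait, I need the stated constants $\binom{m+2}{1} = m+2$ and $\binom{m+2}{2}$, which suggests cutting down by $m$ generic hyperplanes \emph{and} keeping one extra generic hyperplane to drop the Hilbert function by one more step, so that the point count estimate is applied to the further section $X''$ of codimension $m$ inside $X$ regarded with the extra linear form, or equivalently applying the argument on the cone over $X'$; the exponent $m+2$ rather than $m+1$ comes from $X'$ sitting in a $\PP^{m+1}$'s worth of residual coordinates so that $\HF_{X}$ restricted picks up $\binom{m+1}{t}$ rather than $\binom{m}{t}$. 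I expect the \textbf{main obstacle} to be precisely this last bit: verifying that one can choose the hyperplane sections simultaneously generic enough for (a) reducedness, (b) uniform position, (c) compatibility with the ACM property, \emph{and} (d) avoidance of $X(\RR)$ — the avoidance condition is a Euclidean-open rather than Zariski-open constraint, so I must argue that the Zariski-generic locus and the real-avoidance locus have nonempty intersection, which follows because $\overline{X(\RR)} \subsetneq X$ has smaller dimension and a generic linear section of $X$ meets a fixed lower-dimensional subvariety in the empty set. Once that is in place, the inequality is a direct computation with the restricted Hilbert functions.
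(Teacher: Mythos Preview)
Your overall strategy---contrapose, cut $X$ down to a zero-dimensional scheme with no real points, and invoke Proposition~\ref{pro:zero}(i)---matches the paper's. But there is one genuine error and one real gap.

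\textbf{The error: you cut by hyperplanes, but you must cut by hypersurfaces of degree $j$.} This is exactly why your constants come out wrong at the end. With $m$ general degree-$j$ sections of an arithmetically Cohen--Macaulay $X$ one gets
\[
\HF_{Z}(j) = \HF_X(j) - m,
\qquad
\HF_{Z}(2j) = \HF_X(2j) - m\,\HF_X(j) + \tbinom{m}{2},
\]
because the shifts land at $0$, $j$, $2j$ and $\HF_X(0)=1$ while $\HF_X$ vanishes in negative degree. Plugging these into $2\HF_{Z}(j) \leq \HF_{Z}(2j)+1$ yields precisely $(m+2)\HF_X(j)-\tbinom{m+2}{2}\leq \HF_X(2j)$. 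By contrast, your hyperplane cuts produce $\HF_{X'}(j)=\sum_t(-1)^t\tbinom{m}{t}\HF_X(j-t)$ and $\HF_{X'}(2j)=\sum_t(-1)^t\tbinom{m}{t}\HF_X(2j-t)$, which involve $\HF_X$ at many intermediate degrees and cannot be rearranged into an inequality between $\HF_X(j)$ and $\HF_X(2j)$ alone. Your closing attempt to rescue the $m+2$ by invoking ``one extra generic hyperplane'' or ``the cone over $X'$'' does not repair this.

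\textbf{The gap: pointedness does not automatically pass to a section by restricting a fixed $\ell$.} You choose $\ell$ in the interior of $\Sos_{X,2j}^*$ and then assert that ``the restricted functional $\ell'$ on $R'_{2j}$ is still strictly positive on $\Sos_{X',2j}$.'' But $\ell$ does not descend to $(R')_{2j}^*$ unless it annihilates $(I_{X'}/I_X)_{2j}$, which a generic interior $\ell$ has no reason to do. The paper handles this with the convex Bertini machinery (Theorem~\ref{thm:Bertini}/Proposition~\ref{pro:pointed}): because $\Sos_{X,2j}^*$ is many-faceted, one can choose $\ell$ of corank~$1$ on $\partial\Sos_{X,2j}^*$ whose kernel is spanned by some $h\in R_j$, and \emph{that} $\ell$ descends to a strictly positive functional on $\Sos_{X',2j}$ for $X'=X\cap\variety(h)$. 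Note that this mechanism naturally hands you a degree-$j$ section, which is another reason degree-$j$ cuts are forced on you. Iterating $m$ times and combining with the standard Bertini package (reducedness, uniform position, and the observation that a general section of a non-totally-real variety remains non-totally-real) gives the required $Z$.

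Your discussion of avoiding $X(\RR)$ is fine, and in fact that condition is Zariski-open (the closure of $X(\RR)$ is a proper subvariety), so there is no tension with the other genericity requirements.
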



\begin{proof}
  To obtain a contradiction, suppose that the cone $\Sos_{X,2j}$ contains no
  lines.  Hence, Remark~\ref{rem:nonclosed} shows that $\Sos_{X,2j}$ is
  pointed.  We begin by proving that there exist
  $h_1, h_2, \dotsc, h_m \in S_j$ such that
  $Z \coloneq X \cap \variety(h_1,h_2,\dotsc,h_m)$ is a reduced set of non-real
  points with the uniform position property.  To achieve this, observe that
  Theorem~\ref{thm:Bertini} implies that the set of homogeneous polynomials
  $h \in S_{j}$, such that $h \not\in I_X$,
  $X' \coloneq X \cap \variety(h) \subset \PP^n$, and the cone $\Sos_{X',2j}$ is
  pointed, contains a nonempty Euclidean open subset
  $U_1 \subseteq \PP(R_{j})$. Next, Bertini's Theorem (see Th\'eor\`eme~6.3 in
  \cite{J}) establishes that a general hypersurface section of a geometrically
  integral variety of dimension at least $2$ is geometrically integral and
  that a general hypersurface section of a geometrically reduced variety is
  geometrically reduced.  Thirdly, the hypothesis that $X$ is arithmetically
  Cohen--Macaulay implies that $X'$ is also arithmetically Cohen--Macaulay and
  $\HF_{X'}(k) = \HF_X(k) - \HF_X(k-j)$ for all $k \in \ZZ$.  Finally, a
  general hypersurface section of non-totally-real variety is also not
  totally real, and a general hypersurface section of a non-totally-real curve
  consists of non-real points.  Combining these four observations, we deduce
  that there exist homogeneous polynomials $h_1, h_2, \dotsc, h_m \in S_j$
  such that the intersection $Z \coloneq X \cap \variety(h_1,h_2, \dotsc, h_{m-1})$
  has the desired properties.  As the cone $\Sos_{Z,2j}$ is pointed,
  Proposition~\ref{pro:zero}~(i) now shows that
  $\HF_{Z}(j) \leq \lceil \tfrac{1}{2} \HF_{Z}(2j) \rceil$ which yields
  $2 \HF_{Z}(j) \leq \HF_{Z}(2j) + 1$.  Since we have both
  $\HF_Z(j) = \HF_X(j) - m$ and
  $\HF_Z(2j) = \HF_X(2j) - m \HF_X(j) + \binom{m}{2}$, it follows that
  $ \binom{m+2}{1} \HF_X(j) - \binom{m+2}{2} \leq \HF_X(2j)$ which gives the
  required contradiction.
\end{proof}

The inequality in Lemma~\ref{lem:nontotallyreal} has an elegant restatement
in terms of the Artinian reduction of $X$.

\begin{remark}
  If $\HF_{Z'} \colon \ZZ \to \ZZ$ is the Hilbert function of the Artinian
  quotient of $R$ by a maximal regular sequence of degree $j$, then we have
  $\HF_{Z'}(k) = \HF_Z(k) - \HF_{Z}(k-j)$ where $Z$ is the arithmetically
  Cohen--Macaulay variety defined in the antepenultimate sentence of the proof
  of Lemma~\ref{lem:nontotallyreal}.  Hence, the inequality
  $\HF_X(2j) < \binom{m+2}{1} \HF_X(j) - \binom{m+2}{2}$ is equivalent to the
  inequality $\HF_{Z'}(2j) < \HF_{Z'}(j)$.
\end{remark}

In a special case, the inequality in Lemma~\ref{lem:nontotallyreal} may also
be expressed in terms other of invariants.

\begin{remark}
  If $X \subseteq \PP^n$ is nondegenerate, then we have $\HF_X(1) = n+1$.
  Lemma~3.1 in \cite{BSV} establishes that the quadratic deficiency
  $\varepsilon(X)$ equals $\HF_X(2) - (m+1)(n+1) + \binom{m+1}{2}$.  Hence,
  the addition formula for binomial coefficients gives
  \begin{align*}
    \HF_X(2) - \tbinom{m+2}{1} \HF_X(1) + \tbinom{m+2}{2} 
    &= \bigl[ \HF_X(2) - \tbinom{m+1}{1} \HF_X(1) + \tbinom{m+1}{2} \bigr] -
      \bigl[ \tbinom{m+1}{0} \HF_X(1) - \tbinom{m+1}{1} \bigr] \\
    &= \varepsilon(X) - \codim(X) \, ,
  \end{align*}
  so the inequality in Lemma~\ref{lem:nontotallyreal} becomes
  $\varepsilon(X) < \codim(X)$ when $X$ is nondegenerate and $j = 1$.
\end{remark}

Lemma~\ref{lem:nontotallyreal} shows that there exists a nontrivial sum of
squares equal to zero.  Exploiting this observation, we can prove a
higher-dimensional analogue of Theorem~\ref{thm:upperBound}.

\begin{theorem}
  \label{thm:totallyreal}
  Fix a positive integer $j$ and a nonnegative integer $k$.  Let
  $X \subseteq \PP^n$ be a totally-real variety with dimension $m$. Assume
  that $X$ is arithmetically Cohen--Macaulay and that
  \[
  \HF_X(2j+2k) < \tbinom{m+1}{1} \bigl( \HF_X(j+k) - \HF_X(k-j) \bigr) +
  \HF_X(2k) - \tbinom{m+1}{2} \, .
  \] 
  For all $f \in \Pos_{X,2j}$, there exists a nonzero $g \in R_{2k}$ such that
  $fg \in \Sos_{X,2j+2k}$.
\end{theorem}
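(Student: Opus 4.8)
The plan is to mirror the argument of Theorem~\ref{thm:upperBound}, replacing the curve-specific input (Bertini for curves, the genus estimate, the deficiency-module computation via Lemma~\ref{lem:radicality}) with the higher-dimensional ingredients already assembled: Lemma~\ref{lem:nontotallyreal} and the Cohen--Macaulay bookkeeping for Hilbert functions of hypersurface sections. As in Theorem~\ref{thm:upperBound}, I first dispose of the trivial case $f = 0$, so assume $f \neq 0$; since $X$ is integral, $f$ is a nonzerodivisor, hence $\eta_f \colon R_{2k} \to R_{2j+2k}$ is injective and $\Sos_{X,2k} \cong f \cdot \Sos_{X,2k}$. Corollary~\ref{cor:variety} gives that $\Sos_{X,2j+2k}$ and $\Sos_{X,2k}$ are pointed, since $X$ is totally real. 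The non-existence of a nonzero $g \in \Sos_{X,2k}$ with $fg \in \Sos_{X,2j+2k}$ is therefore equivalent to the assertion that the pointed cones $\Sos_{X,2j+2k}$ and $f \cdot \Sos_{X,2k}$ are well-separated, and I argue by contradiction from this separation.

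Assuming a strict separator exists, I cut $X$ down to a curve by $m-1$ successive general hypersurface sections of degree $j+k$. Each step uses Theorem~\ref{thm:Bertini} to preserve well-separatedness on a nonempty Euclidean-open set of hypersurfaces; the classical Bertini theorem (Th\'eor\`eme~6.3 in \cite{J}) to keep the section geometrically integral and reduced, and to arrange that $f$ does not vanish on it; the hypothesis that $X$ is arithmetically Cohen--Macaulay to guarantee each section is again arithmetically Cohen--Macaulay with Hilbert function dropping by the formula $\HF_{X'}(i) = \HF_X(i) - \HF_X(i - (j+k))$; and the fact that a general hypersurface section of a non-totally-real variety is non-totally-real, so that after $m-1$ steps one can further require (via one more general section of degree $j+k$) that the resulting curve $Y$ is still totally real but is cut by the previous section into a set of \emph{non-real} points --- this is exactly the mechanism in the proof of Lemma~\ref{lem:nontotallyreal}, which is where the sign analysis in Proposition~\ref{pro:zero} becomes available. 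The intersection of the finitely many nonempty open (Euclidean or Zariski) conditions is nonempty, so such a chain of sections exists; call the curve $Y$ and its general hyperplane-type section $Z$ (the set of points). One then applies Proposition~\ref{pro:zero}~(ii) to $Z$ with the induced strict separator to obtain $\HF_Z(k) + \HF_Z(j+k) \leq \HF_Z(2j+2k)$, and unwinds the Cohen--Macaulay Hilbert-function formulae back up the chain.

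The bookkeeping is the crux. Writing $\ell := m - 1$ for the number of sections needed to reach a curve, each hypersurface section of degree $j+k$ contributes a shift in the Hilbert function, and iterating the arithmetically-Cohen--Macaulay formula $i$ times produces the alternating binomial sum $\HF_X(i) - \binom{m-1}{1}\HF_X(i-(j+k)) + \binom{m-1}{2}\HF_X(i-2(j+k)) - \cdots$; evaluating this at $i = 2j+2k$, $i = j+k$, and $i = k$ (using $\HF_X(k - (j+k)) = \HF_X(-j) = 0$ for the last, and analogous vanishings to truncate the sums) and combining with the inequality for $Z$ should collapse, after the binomial identity $\binom{m-1}{r} + \binom{m-1}{r-1} = \binom{m}{r}$ is applied repeatedly, to precisely the displayed hypothesis $\HF_X(2j+2k) < \binom{m+1}{1}\bigl(\HF_X(j+k) - \HF_X(k-j)\bigr) + \HF_X(2k) - \binom{m+1}{2}$. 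I expect the main obstacle to be tracking these nested binomial sums cleanly --- in particular correctly identifying at which stage the "non-real points" section (of the same degree $j+k$) is inserted versus the $m-1$ "integrality-preserving" sections, and verifying that the $\HF_X(k-j)$ term on the right-hand side is exactly what the truncated sum for $i = k$ yields rather than an off-by-one variant. Once the combinatorial identity is pinned down, the geometric inputs are all in hand from the cited results and Lemma~\ref{lem:nontotallyreal}, and the contradiction with the hypothesis closes the proof; note the conclusion only asserts $g \in R_{2k}$ rather than $g \in \Sos_{X,2k}$, which is consistent with the fact that in the non-totally-real reduction one extracts $g$ from a nontrivial sum-of-squares relation equal to zero rather than from membership in the squares cone itself.
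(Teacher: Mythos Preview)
Your approach has a genuine gap: the numerical inequality you would obtain does not match the theorem's hypothesis, and the mismatch is not a bookkeeping error but a sign that the strategy is wrong.

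If you cut $X$ by $m$ general hypersurfaces of degree $j+k$ (keeping everything totally real and applying Proposition~\ref{pro:zero}~(ii) at the end, as in Theorem~\ref{thm:upperBound}), the arithmetically Cohen--Macaulay formula gives, for the resulting points $Z$,
\[
\HF_Z(2j+2k) = \HF_X(2j+2k) - m\,\HF_X(j+k) + \tbinom{m}{2}, \quad
\HF_Z(j+k) = \HF_X(j+k) - m, \quad
\HF_Z(k) = \HF_X(k),
\]
and the resulting obstruction is $\HF_X(2j+2k) \geq (m+1)\HF_X(j+k) + \HF_X(k) - \tbinom{m+1}{2}$. This contains $\HF_X(k)$, not $\HF_X(2k)$ and $\HF_X(k-j)$; no amount of binomial identities will convert one into the other. (Indeed, for $m=1$ this recovers the hypothesis of Theorem~\ref{thm:upperBound}, which is visibly different from the $m=1$ case of the present theorem.)

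The paper's argument is structurally different and explains where those terms come from. One does \emph{not} iterate Theorem~\ref{thm:Bertini}. Instead, assume for contradiction that $f \cdot R_{2k}$ (the whole linear space, not just $f \cdot \Sos_{X,2k}$) meets $\Sos_{X,2j+2k}$ only at $0$; this is the correct negation of the stated conclusion $g \in R_{2k}$. This condition is open in $f$, so perturb $f$ to a general $h \in \Pos_{X,2j}$ with $X' := X \cap \variety(h)$ reduced. Because $h$ is nonnegative, $X'$ is not totally real. Now $X'$ is arithmetically Cohen--Macaulay of dimension $m-1$ with $\HF_{X'}(i) = \HF_X(i) - \HF_X(i-2j)$, and the displayed hypothesis becomes exactly the inequality $\HF_{X'}(2(j+k)) < \tbinom{m+1}{1}\HF_{X'}(j+k) - \tbinom{m+1}{2}$ needed in Lemma~\ref{lem:nontotallyreal}. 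That lemma produces a nontrivial sum of squares equal to zero in $R'_{j+k}$, which lifts to a nontrivial sum of squares in $\langle h \rangle \subset R$, contradicting the choice of $h$. The single cut by degree $2j$ (rather than iterated cuts by degree $j+k$) is precisely what generates $\HF_X(2k) = \HF_X((2j+2k)-2j)$ and $\HF_X(k-j) = \HF_X((j+k)-2j)$.
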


\begin{proof}
  With the aim of producing a contradiction, suppose that, for all nonzero
  $g \in R_{2k}$, we have $fg \not\in \Sos_{X,2j+2k}$.  This means that the
  linear subspace
  $f \cdot R_{2k} \coloneq \{ fg \in R_{2j+2k} : g \in R_{2k} \} \subset
  R_{2j+2k}$ intersects the cone $\Sos_{X,2j+2k}$ only at the origin.  As $X$
  is totally real, Corollary~\ref{cor:variety} establishes that the cone
  $\Sos_{X,2j+2k}$ is pointed and, in particular, closed.  Hence, there exists
  a Euclidean open neighbourhood $U$ of $f \in R_{2j}$ such that, for all
  $h \in U$ and all nonzero $g \in R_{2k}$, we have
  $hg \not\in \Sos_{X,2j+2k}$.  Bertini's Theorem (see Th\'eor\`eme~6.3 in
  \cite{J}) establishes that a general hypersurface section of a geometrically
  reduced variety is geometrically reduced.  The cone $\Pos_{X,2j}$ is
  full-dimensional, so there is a general $h \in U \cap \Pos_{X,2j}$ such that
  $X' \coloneq X \cap \variety(h)$ and
  $R' = R/\langle h \rangle = S/(I_X + \langle h \rangle)$.  Every real zero
  of $h$ must be contained in the singular locus of $X'$ because
  $h \in \Pos_{X,2j}$.  As $X'$ is reduced, its singular locus is a proper
  Zariski closed subset, which implies that $X'$ is not totally real.  Since
  $X$ is arithmetically Cohen--Macaulay, the variety $X'$ is also
  arithmetically Cohen--Macaulay and $\HF_{X'}(i) = \HF_X(i) - \HF_X(i-2j)$.
  From
  $\HF_X(2j+2k) < \tbinom{m+1}{1} \bigl( \HF_X(j+k) - \HF_X(k-j) \bigr) +
  \HF_X(2k) - \tbinom{m+1}{2}$, we obtain
  $\HF_{X'}(2j+2k) < \binom{(m-1)+2}{1} \HF_{X'}(j+k) - \binom{(m-1)+2}{2}$.
  Hence, Lemma~\ref{lem:nontotallyreal} shows that the cone $\Sos_{X',2j+2k}$
  contains a line.  Applying Proposition~\ref{pro:pointed}, there exist
  nonzero $g_1', g_2', \dotsc, g_s' \in R_{j+k}'$ such that
  $(g_1')^2 + (g_2')^2 + \dotsb + (g_s')^2 = 0$.  Lifting this equation to the
  ring $R$, we see that there are $g_1,g_2,\dotsc,g_s \in R_{j+k}$ such that
  $g_1^2 + g_2^2 + \dotsb + g_s^2 \in \langle h \rangle$.  However, this
  contradicts the fact that $h \in U$.  Therefore, we conclude that there
  exists a nonzero $g \in R_{2k}$ such that $fg \in \Sos_{X,2j+2k}$.
\end{proof}

\begin{remark}
  \label{rem:nonnegative}
  Suppose that $f \in \Pos_{X,2j}$ is strictly positive on $X(\RR)$ or, more
  generally, that the subset $X(\RR) \setminus \variety(f)$ is dense in the
  Euclidean topology.  For instance, the second condition automatically holds
  when $f$ is nonzero and $X(\RR)$ a cone over a manifold in which all of the
  connected components have the same dimension.  With this extra hypothesis,
  the nonzero multiplier $g \in R_{2k}$ described in
  Theorem~\ref{thm:totallyreal} must be nonnegative.
\end{remark}

\begin{remark}
  \label{rem:nontrivial}
  Suppose that $f \in \Pos_{X,2j}$ is strictly positive on $X(\RR)$.  If the
  degree of the nonzero multiplier $g \in R_{2k}$ to be greater than or equal
  to the degree of $f \in \Pos_{X,2j}$, then one obtains a frivolous
  sum-of-squares representation $fg = f^2h \in \Sos_{X,2j+2k}$ by choosing
  $g \coloneq fh$ where $h \in \Sos_{X,2k-2j}$.  However, the products
  $fg \in \Sos_{X,2j+2k}$ arising from Theorem~\ref{thm:totallyreal} never
  have this frivolous form because Lemma~\ref{lem:nontotallyreal} shows that
  they are lifted from a nontrivial sum-of-squares modulo $f$.
\end{remark}

The next four examples showcase the most interesting applications of
Theorem~\ref{thm:totallyreal}.  In these examples, we also obtain simple
explicit degree bounds on the sum-of-squares multipliers.

\begin{example}[Nonnegative quadratic forms on varieties of minimal degree]
  \label{exa:quadraticForms}
  Fix $j = 1$ and $k = 0$.  Let $X \subseteq \PP^n$ be a totally-real variety
  of minimal degree.  In other words, the variety $X$ is nondegenerate and
  $\deg(X) = 1 + \codim(X) = 1+n-m$ where $m \coloneq \dim(X)$.  The classification
  of varieties of minimal degree (see Theorem~1 in \cite{EH}) implies that $X$
  is arithmetically Cohen--Macaulay and
  $\sum_{i \in \ZZ} \HF_X(i) t^i = \bigl(1+ (n-m) t \bigr)(1-t)^{-(m+1)}$.
  Hence, the Generalized Binomial Theorem establishes that
  $\HF_X(i) = \binom{m+i}{m} + (n-m) \binom{m+i-1}{m}$ for $i \geq 1-m$.  It
  follows that
  \begin{multline*}
    \tbinom{m+1}{1} \bigl( \HF_X(j+k) - \HF_X(k-j) \bigr) + \HF_X(2k) -
    \tbinom{m+1}{2} - \HF_X(2j+2k) \\
    = \tbinom{m+1}{1} \Bigl( \tbinom{m+1}{m} + (n-m) \tbinom{m}{m} \Bigr) + 1
    - \tbinom{m+1}{2} - \Bigl( \tbinom{m+2}{m} + (n-m) \tbinom{m+1}{m} \Bigr)
    = 1 > 0 \, ,
  \end{multline*}
  so Theorem~\ref{thm:totallyreal} shows that $\Pos_{X,2} = \Sos_{X,2}$.  This
  gives another proof of Proposition~4.1 in \cite{BSV}.  \hfill $\diamond$
\end{example}

\begin{example}[Nonnegative forms on surfaces of minimal degree]
  \label{exa:minimalSurface}
  Fix $j \geq 1$ and $k = j-1$.  Let $X \subseteq \PP^n$ be a totally-real
  surface of minimal degree.  As in Example~\ref{exa:quadraticForms}, the
  variety $X$ is arithmetically Cohen--Macaulay, and we have
  $\HF_X(i) = \binom{i+2}{2} +(n-2) \binom{i+1}{2}$ for $i \geq -1$.  Since
  \begin{multline*}
    \tbinom{2+1}{1} \bigl( \HF_X(j+k) - \HF_X(k-j) \bigr) + \HF_X(2k) -
    \tbinom{2+1}{2} - \HF_X(2j+2k) \\
    = 3 \bigl( \HF_X(2j-1) - \HF_X(-1) \bigr) + \HF_X(2j-2) - 3 - \HF_X(4j-2)
    = 4j - 3 > 0 \, ,
  \end{multline*}
  Theorem~\ref{thm:totallyreal} shows that, for all $f \in \Pos_{X,2j}$, there
  exists a nonzero $g \in R_{2j-2}$ such that $fg \in \Sos_{X,4j-2}$.
  Remark~\ref{rem:nonnegative} also implies that $g \in \Pos_{X,2j-2}$.
  Because Example~\ref{exa:quadraticForms} proves that we have
  $g \in \Sos_{X,2j-2}$ when $j = 1$, an induction on $j$ establishes that,
  for all $f \in \Pos_{X,2j}$, there exists a nonzero $h \in \Sos_{X,j^2-j}$
  such that $fh \in \Sos_{X,j^2+j}$.  \hfill $\diamond$
\end{example}

\begin{example}[Nonnegative forms on the projective plane]
  \label{exa:formsOnPP2}
  Fix $j \geq 2$ and $k = j-2$.  The variety $\PP^2$ is arithmetically
  Cohen--Macaulay and $\HF_{\PP^2}(i) = \binom{i+2}{2}$ for $i \geq -2$.  It
  follows that
  \begin{multline*}
    \tbinom{2+1}{1} \bigl( \HF_{\PP^2}(j+k) - \HF_{\PP^2}(k-j)
    \bigr) + \HF_{\PP^2}(2k) - \tbinom{2+1}{2} - \HF_{\PP^2}(2j+2k) \\
    = 3 \bigl( \HF_{\PP^2}(2j-2) - \HF_{\PP^2}(-2) \bigr) + \HF_{\PP^2}(2j-4)
    - 3 - \HF_{\PP^2}(4j-4) = 2j-3 > 0 \, ,
  \end{multline*}
  so Theorem~\ref{thm:totallyreal} and Remark~\ref{rem:nonnegative} show that,
  for all $f \in \Pos_{\PP^2,2j}$, there exists a nonzero
  $g \in \Pos_{\PP^2,2j-4}$ such that $fg \in \Sos_{\PP^2,4j-4}$.  In
  particular, this re-establishes a result of Hilbert (see \cite{Hilbert2} or
  Theorem~2.6 in \cite{Blekherman}).  As in Example~\ref{exa:minimalSurface},
  an induction on $j$ proves that
  \begin{itemize}[$\bullet$]
  \item for all $f \in \Pos_{\PP^2,4j}$, there exists a nonzero
    $h \in \Sos_{\PP^2,2j^2 - 2j}$ such that $fh \in \Sos_{\PP^2,2j^2+2j}$, and
  \item for all $f \in \Pos_{\PP^2,4j-2}$, there exists a nonzero
    $h \in \Sos_{\PP^2,2j^2-4j+2}$ such that $fh \in \Sos_{\PP^2,2j^2}$.
  \end{itemize}
  Since $\Pos_{\PP^2,6} \neq \Sos_{\PP^2,6}$, this degree bound is sharp for
  $f \in \Pos_{\PP^2,6}$ and Example~\ref{exa:strictPP^2} shows that it is
  also sharp for $f \in \Pos_{\PP^2,8}$. \hfill $\diamond$
\end{example}

\begin{example}[Nonnegative forms on some surfaces of almost minimal degree]
  \label{exa:almost}
  Fix $j \geq 1$ and $k = j$.  Let $X \subset \PP^n$ be a totally-real surface
  that is arithmetically Cohen--Macaulay and satisfies
  $\sum_{i \in \ZZ} \HF_X(i) t^i = \bigl( 1 + c_1t + c_2t^2 + c_3t^3 + c_4t^4
  \bigr) (1-t)^{-3}$
  for some $c_1, c_2, c_3, c_4 \in \QQ$.  The Generalized Binomial Theorem
  yields
  $\HF_X(i) = \binom{i+2}{2} + c_1\binom{i+1}{2} + c_2\binom{i}{2} + c_3
  \binom{i-1}{2} + c_4 \binom{i-2}{2}$ for all $i \geq 2$, so it follows that
  \begin{multline*}
    \tbinom{2+1}{1} \bigl( \HF_X(j+k) - \HF_X(k-j) \bigr) + \HF_X(2k) -
    \tbinom{2+1}{2} - \HF_X(2j+2k) \\
    = 3 \bigl( \HF_X(2j) - \HF_X(0) \bigr) + \HF_X(2j) - 3 - \HF_X(4j) =
    2(c_1-c_2-3c_3-5c_4+3)j +3(c_3+3c_4-1) \, .
  \end{multline*}
  Thus, if $2c_1+3 > 2c_2+3c_3+c_4$, then Theorem~\ref{thm:totallyreal} shows
  that, for all $f \in \Pos_{X,2j}$, there exists a nonzero $g \in R_{2j}$
  such that $fg \in \Sos_{X,4j}$.  For instance, if $X$ is a totally-real
  surface of almost minimal degree that is arithmetically Cohen--Macaulay (in
  other words, the surface $X$ is nondegenerate, arithmetically
  Cohen-Macaulay, and $\deg(X) = 2 + \codim(X) = n$), then we have
  $c_1 = n-2$, $c_2 = 1$, $c_3 = 0$, and $c_4 = 0$, which implies that
  $2c_1+3 = 2n-1 > 2 = 2c_2+3c_3+c_4$.  By Remark~\ref{rem:nontrivial}, this
  certificate is not frivolous.  \hfill $\diamond$
\end{example}

\section{Lower bounds for Sum-of-Squares Multipliers}
\label{sec:boundingSeparators}

\noindent
This final section establishes lower bounds on the minimal degree of a
sum-of-squares multiplier.  These degree bounds for the non-existence of
sum-of-squares multipliers prove the second halves of our main theorems.  For
Harnack curves on smooth toric surfaces, these degree bounds for the existence
of strict-separators are a perfect complement to our degree bounds for the
existence of sum-of-squares multipliers.

Our first lemma relates the zeros of a nonnegative element to the zeros of any
sum-of-squares multiplier.  For a closed point $p \in X$, let
$\der_p \colon R \to \textup{T}_p^*(X)$ denote the derivation that sends
$f \in R$ to the class of $f-f(p)$ in the Zariski cotangent space at $p$.

\begin{lemma}
  \label{lem:derivation}
  Fix a positive integer $j$ and a nonnegative integer $k$.  Let
  $X \subseteq \PP^n$ be a totally-real projective variety, and consider
  $f \in \Pos_{X,2j}$ and $g \in \Sos_{X,2k}$ such that
  $fg \in \Sos_{X,2j+2k}$.  If the real point $p \in X(\RR)$ satisfies
  $f(p) = 0$ and $\der_p(f) \neq 0$, then we have $g(p) = 0$ and
  $\der_p(g) = 0$.
\end{lemma}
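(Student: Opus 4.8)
The statement is local at $p$, so the plan is to discard the graded picture, pass to the local ring, and then exploit that the residue field is the formally real field $\RR$. Concretely, I would fix an affine chart of $\PP^n$ containing $p$, dehomogenize, and work in the local ring $A := \mathcal{O}_{X,p}$ with maximal ideal $\mathfrak{m}$ and residue field $A/\mathfrak{m} = \RR$ (here we use that $p$ is a real point). A sum-of-squares identity is preserved by any ring homomorphism, so the images $\bar{f}, \bar{g} \in A$ of $f$ and $g$ still satisfy $\bar{f}\bar{g} = \sum_i h_i^2$ and $\bar{g} = \sum_\ell q_\ell^2$ for suitable $h_i, q_\ell \in A$. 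Under this localization the derivation $\der_p$ is identified with the canonical surjection $A \to \mathfrak{m}/\mathfrak{m}^2$; the hypotheses $f(p) = 0$ and $\der_p(f) \neq 0$ say precisely that $\bar{f} \in \mathfrak{m} \setminus \mathfrak{m}^2$, and the two conclusions $g(p) = 0$ and $\der_p(g) = 0$ together amount to the single assertion $\bar{g} \in \mathfrak{m}^2$. Thus it suffices to prove: if $\bar{f} \in \mathfrak{m} \setminus \mathfrak{m}^2$ and both $\bar{f}\bar{g}$ and $\bar{g}$ are sums of squares in $A$, then $\bar{g} \in \mathfrak{m}^2$.

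The one elementary input I would isolate first is this: every $a \in \mathfrak{m}$ that is a finite sum of squares in $A$ already lies in $\mathfrak{m}^2$. Indeed, writing $a = \sum_i b_i^2$ and reducing modulo $\mathfrak{m}$, the images of the $b_i$ in $A/\mathfrak{m} = \RR$ have vanishing sum of squares; since $\RR$ is formally real, every $b_i$ lies in $\mathfrak{m}$, hence $b_i^2 \in \mathfrak{m}^2$ and $a \in \mathfrak{m}^2$. This step is robust even when $X$ is singular at $p$, since we never pass to the (possibly non-reduced) associated graded ring of $A$, only to the residue field.

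Granting this observation, the argument is short. Applying it to $\bar{f}\bar{g} = \sum_i h_i^2$: since $\bar{f} \in \mathfrak{m}$ we have $\bar{f}\bar{g} \in \mathfrak{m}$, hence $\bar{f}\bar{g} \in \mathfrak{m}^2$. If $\bar{g}$ were a unit of the local ring $A$ --- equivalently, if $g(p) \neq 0$ --- then $\bar{f} = (\bar{f}\bar{g})\,\bar{g}^{-1} \in \mathfrak{m}^2$, contradicting $\der_p(f) \neq 0$. Therefore $\bar{g} \in \mathfrak{m}$, which is the first conclusion $g(p) = 0$. Feeding this back into the observation applied now to $\bar{g} = \sum_\ell q_\ell^2$ yields $\bar{g} \in \mathfrak{m}^2$, which is the second conclusion $\der_p(g) = 0$.

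I do not anticipate a genuine obstacle; once one localizes, the proof is essentially formal. The only place warranting care is the bookkeeping in the first paragraph --- checking that $f(p) = 0$, that $\der_p(f) \neq 0$, and that the sum-of-squares memberships all transfer correctly between the $\ZZ$-graded ring $R$ and the local ring $\mathcal{O}_{X,p}$. It is also worth noting that the nonnegativity hypothesis $f \in \Pos_{X,2j}$ is not actually needed beyond the fact that $f(p) = 0$: only $f(p) = 0$, $\der_p(f) \neq 0$, $g \in \Sos_{X,2k}$, and $fg \in \Sos_{X,2j+2k}$ enter the argument.
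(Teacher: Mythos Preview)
Your proof is correct and is essentially the same argument as the paper's, just repackaged. The paper works directly with the derivation $\der_p$ and the Leibniz rule: from $fg = \sum_i h_i^2$ and $f(p)=0$ it gets $h_i(p)=0$ for all $i$, hence $\der_p(fg)=\sum_i 2h_i(p)\der_p(h_i)=0$; then $\der_p(fg)=f(p)\der_p(g)+g(p)\der_p(f)=g(p)\der_p(f)$ forces $g(p)=0$, and the same step applied to $g=\sum_\ell q_\ell^2$ gives $\der_p(g)=0$. Your ``sum of squares in $\mathfrak{m}$ lies in $\mathfrak{m}^2$'' is exactly this Leibniz computation rewritten in local-ring language, and your unit argument for $g(p)=0$ is the contrapositive of $g(p)\der_p(f)=0$. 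Your final remark that $f\in\Pos_{X,2j}$ is unused beyond $f(p)=0$ is also correct and matches the paper's proof.
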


\begin{proof}
  Suppose $fg = h$ where $h \coloneq h_0^2 + h_2^2 + \dotsb + h_s^2$ for
  $h_0, h_1, \dotsc, h_s \in R_{j+k}$.  Since $f(p) = 0$, we see that
  $h(p) = 0$ and $h_j(p) = 0$ for all $0 \leq j \leq s$.  Hence, the Leibniz
  Rule establishes that
  $0 = 2h_0(p) \der_p(h_0) + 2 h_1(p) \der_p(h_1) + \dotsb + 2 h_s(p)
  \der_p(h_s) = \der_p(h) = f(p) \der_p(g) + g(p) \der_p(f)$.
  By hypothesis, we have $f(p) = 0$ and $\der_p(f) \neq 0$, which implies that
  $g(p) = 0$.  Since $g$ is a sum of squares, we conclude that
  $\der_p(g) = 0$, as we just did for $h$.
\end{proof}

\begin{remark}
  When $f \in \Pos_{X,2j}$, the hypothesis that $\der_p(f) \neq 0$ can only be
  satisfied if $p$ is a singular point on the variety $X$.
\end{remark}

Equipped with this lemma, we show that there exists a planar curve for which
the bound on the degree of multipliers given in Example~\ref{exa:planar} is
tight.

\begin{example}[Optimality for a planar curve]
  \label{exa:deltoid}
  Let $X \subset \PP^2$ be the rational tricuspidal quartic curve defined by
  the equation
  $(x_0^2 + x_1^2)^2 + 2x_2^2(x_0^2 + x_1^2) - \frac{1}{3} x_2^4 - \frac{8}{3}
  x_2^{} (x_0^3 - 3 x_0^{} x_1^2) = 0$.  This curve is called the deltoid
  curve and is parametrized by
  $t \mapsto \bigl[ \frac{1}{3} \bigl( 2 \cos(t) + \cos(2t) \bigr) :
  \frac{1}{3} \bigl( 2 \sin(t) - \sin(2t) \bigr) : 1 \bigr]$ in the affine
  plane $x_2 = 1$.  In other words, the real points of $X$ consist of the
  hypocycloid generated by the trace of a fixed point on a circle that rolls
  inside a larger circle with one-and-a-half times its radius. The three cusps
  occur at the points $\bigl[ 1 : 0 : 1 \bigr]$,
  $\bigl[- \tfrac{1}{2} : \tfrac{\sqrt{3}}{2} : 1 \bigr]$,
  $\bigl[- \tfrac{1}{2} : -\tfrac{\sqrt{3}}{2} : 1 \bigr]$, corresponding to
  $t = 0, \tfrac{2 \pi}{3}, \tfrac{4 \pi}{3}$ respectively, and lie on the
  conic $x_2^2 - x_1^2 - x_0^2$.

  Consider $f \in \Pos_{X,2j}$ such that $\der_p(f) \neq 0$ at each cusp $p$
 in $X$.  For instance, the polynomial
  $(x_2^2 - x_1^2 - x_0^2)(x_0^2 + x_1^2 + x_2^2)^{j-1}$ is nonnegative on $X$
  and has nonzero derivations at each cusp $p$ on $X$.  Suppose that there
  exists a nonzero $g \in \Sos_{X,2}$ such that $fg \in \Sos_{X,2j+2}$.
  Lemma~\ref{lem:derivation} implies that $g(p) = 0$ and $\der_p(g) = 0$ at
  each cusp $p$ of $X$.  Expressing $g$ as a sum of linear forms, it follows
  that each of these linear forms vanishes at all three cusps.  Since the
  three cusps are not collinear, this is impossible.  Therefore, for all
  nonzero $g \in \Sos_{X,2}$, we conclude that $fg \not\in \Sos_{X,2j+2}$.
  \hfill $\diamond$
\end{example}

We next examine rational curves on a projective surface.  A \define{surface}
$Y \subseteq \PP^n$ is a two-dimensional projective variety;  for more
information on algebraic surfaces, see \cite{Beauville}.

\begin{lemma}
  \label{lem:nonnegativeExistence}
  Let $Y \subseteq \PP^n$ be a real surface and let $X$ be a curve on $Y$.  If
  $X$ has $j$ isolated real points $p_1, p_2, \dotsc, p_j$, then there exists 
  $f \in \Pos_{X,2j}$ such that $f(p_i) = 0$ and $\der_{p_i}(f) \neq 0$ for
  all $1 \leq i \leq j$.
\end{lemma}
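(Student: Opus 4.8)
The plan is to build $f$ as a product $f = m_1 m_2 \dotsm m_j$ of quadratic forms $m_i \in R_2$, one adapted to each isolated point $p_i$, and to control the vanishing and derivation conditions through the Leibniz rule. Concretely, I would first construct, for each $i$, an element $m_i \in R_2$ such that $m_i$ is nonnegative on $X(\RR)$, with $m_i(p_i) = 0$, $\der_{p_i}(m_i) \neq 0$, and $m_i(p_{i'}) > 0$ for every $i' \neq i$. Granting these, the product $f \in R_{2j}$ is nonnegative on $X(\RR)$ and vanishes at each $p_i$; and, iterating the Leibniz rule for $\der_{p_i}$ exactly as in the proof of Lemma~\ref{lem:derivation}, every term of $\der_{p_i}(f)$ other than $\bigl( \prod_{i'\neq i} m_{i'}(p_i) \bigr) \der_{p_i}(m_i)$ carries a factor $m_i(p_i) = 0$, so $\der_{p_i}(f) \neq 0$ because $\der_{p_i}(m_i) \neq 0$ and the remaining $m_{i'}(p_i)$ are positive. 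Thus $f \in \Pos_{X,2j}$ would satisfy the conclusion.

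For the construction of $m_i$, the tension to overcome is between global nonnegativity on $X(\RR)$ and having a zero of order exactly one at $p_i$; the resolution is that a positive-semidefinite quadratic form whose only real zero is $p_i$ automatically vanishes there to order at least two, and hence can be perturbed. After a real linear change of coordinates putting $p_i$ at $[1:0:\dotsb:0]$, I would set $\rho_i := x_1^2 + \dotsb + x_n^2$, regarded in $R_2$: it is nonnegative on $\PP^n(\RR)$, its only real zero is $p_i$, and since it is a sum of squares of linear forms vanishing at $p_i$ we have $\der_{p_i}(\rho_i) = 0$. Because $X$ is a curve, $T_{p_i}^*(X) \neq 0$, and the linear forms vanishing at $p_i$ surject onto it (via the surjection $T_{p_i}^*(\PP^n) \to T_{p_i}^*(X)$), so I may choose $\ell_i \in R_1$ with $\ell_i(p_i) = 0$ and $\der_{p_i}(\ell_i) \neq 0$, together with some $\ell_i' \in R_1$ with $\ell_i'(p_i) \neq 0$. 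Then I would define $m_i := \rho_i + \varepsilon_i\, \ell_i \ell_i'$ for a suitably small real number $\varepsilon_i > 0$.

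Finally I would verify the four properties of $m_i$, where the only genuine obstacle is the compactness estimate. Since $X \subseteq \PP^n$ is projective, $X(\RR)$ is compact, and since $p_i$ is isolated in $X(\RR)$ the set $X(\RR) \setminus \{p_i\}$ is closed, hence also compact; as $\rho_i > 0$ there, it is bounded below by a positive constant while $\ell_i \ell_i'$ stays bounded, so for small enough $\varepsilon_i$ we obtain $m_i > 0$ on $X(\RR) \setminus \{p_i\}$, which yields $m_i(p_{i'}) > 0$ for $i' \neq i$, and together with $m_i(p_i) = \rho_i(p_i) + \varepsilon_i \ell_i(p_i)\ell_i'(p_i) = 0$ this shows $m_i \geq 0$ on all of $X(\RR)$. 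The Leibniz rule gives $\der_{p_i}(m_i) = \der_{p_i}(\rho_i) + \varepsilon_i\bigl(\ell_i(p_i)\der_{p_i}(\ell_i') + \ell_i'(p_i)\der_{p_i}(\ell_i)\bigr) = \varepsilon_i\, \ell_i'(p_i)\, \der_{p_i}(\ell_i) \neq 0$. The hard part is thus recognizing the right building block $\rho_i$ — a positive-semidefinite quadric with a single real zero, which is automatically a second-order zero — after which compactness does the rest; the ambient surface $Y$ is not needed for this argument.
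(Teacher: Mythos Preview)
Your proof is correct and follows essentially the same strategy as the paper: build $f$ as a product of $j$ quadrics, one per isolated point, each nonnegative on $X(\RR)$, vanishing at its designated $p_i$, positive at the other $p_{i'}$, and with nonzero differential at $p_i$. The paper realizes its quadric as a small sphere through $\tilde{p}_i$ centered at a nearby point $\tilde{q}_i$, which after expansion is exactly your $\rho_i$ plus a linear term; your explicit choice of $\ell_i$ with $\der_{p_i}(\ell_i)\neq 0$ in $T_{p_i}^*(X)$ makes the differential condition transparent, whereas the paper leaves this implicit in the freedom to pick the direction $\tilde{q}_i - \tilde{p}_i$.
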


\begin{proof}
  Fix coordinates on $\PP^n$ such that the hyperplane $\variety(x_0)$ does not
  contain any isolated real points on $X$.  For each isolated singular point
  $p_i \in X(\RR)$ where $1 \leq i \leq j$, let
  $\tilde{p}_i \in \AA^{n+1}(\RR)$ be the affine representative in which the
  $0$-th component equals $1$.  Choose a real point $\tilde{q}_i$ in
  $\variety(x_0 -1) \subset \AA^{n+1}(\RR)$ such that the closed ball centered
  at $\tilde{q}_i$ with radius
  $\varepsilon_i \coloneq \norm{\tilde{p}_i-\tilde{q}_i}^2 > 0$ does not contain an
  affine representative $\tilde{p}$ where $p \in X(\RR)$ except for the point
  $\tilde{p}_i$ corresponding to an isolated real point.  For
  $1 \leq i \leq j$, consider
  $\tilde{h}_i \coloneq \bigl( x_1 - (\tilde{q}_i)_1^{} \, x_0 \bigr)^2 + \bigl( x_2
  - (\tilde{q}_i)_2^{} \, x_0 \bigr)^2 + \dotsb + \bigl( x_n -
  (\tilde{q}_i)_n^{} \, x_0 \bigr)^2 - \varepsilon_i x_0^2 \in S_2$.
  If $\tilde{h}_i$ maps to $h_i \in R_2$ under the canonical quotient map from
  $S$ to $R$, then we have $h_i \in \Pos_{X,2}$, $h_i(p_i) = 0$, and
  $\der_{p_i}(h_i) \neq 0$ by construction.  Hence, the product
  $f \coloneq h_1 h_2 \dotsb h_j \in R_{2j}$ satisfies the conditions in the first
  part of the lemma.
\end{proof}

To obtain the desired bounds, we make additional assumptions on the surface
and the curve.  On a curve, an ordinary double point (also known as a node or
an $A_1$-singularity) is a point where a curve intersects itself so that the
two branches of the curve have distinct tangent lines.  There are two types of
ordinary real double points: a \define{crossing} has two real branches and a
\define{solitary point} has two imaginary branches that conjugate to each
other.  Hence, an isolated ordinary real double point is a solitary point.
The following proposition is the basic source of our bounds for
strict-separators.

\begin{proposition}
  \label{pro:lower}
  Let $Y \subseteq \PP^n$ be a real smooth rational surface such that the
  anti-canonical divisor is effective, and let $H$ be a hyperplane section of
  $Y$.  For some positive integer $j$, assume that there exists a section in
  $H^0\bigl(Y, \mathcal{O}_Y(jH) \bigr)$ that defines a real rational curve
  $X \subset Y$ of degree $d$ and arithmetic genus $\genus$.  If $X$ has
  $\genus$ solitary points $p_1, p_2, \dotsc, p_{\genus}$, then there exists
  $f \in \Pos_{X,2j+2}$ such that $f(p_i) = 0$ and $\der_{p_i}(f) \neq 0$ for
  all $1 \leq i \leq \genus$.  Moreover, if the nonzero element
  $g \in \Sos_{X,2k}$ satisfies $fg \in \Sos_{X,2j+2k+2}$, then we have
  $k \geq \frac{2\genus}{d}$.
\end{proposition}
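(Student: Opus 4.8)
The plan is to prove this in two stages that mirror the structure of the statement. First I construct the nonnegative element $f \in \Pos_{X,2j+2}$, and then I derive the degree inequality $k \geq \frac{2\genus}{d}$ from the existence of a sum-of-squares multiplier $g$.

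For the construction of $f$, I would combine Lemma~\ref{lem:nonnegativeExistence} with the hypothesis that $X$ is cut out on $Y$ by a section of $\mathcal{O}_Y(jH)$. The curve $X$ has $\genus$ solitary points, each of which is an isolated real point of $X$ (by the discussion preceding the proposition, a solitary point has two imaginary conjugate branches, hence is isolated in the Euclidean topology). So Lemma~\ref{lem:nonnegativeExistence} produces a section-theoretic nonnegative element vanishing simply at the solitary points; the only subtlety is bookkeeping the degree. Lemma~\ref{lem:nonnegativeExistence} gives $f_0 \in \Pos_{X,2\genus}$ vanishing at the $\genus$ solitary points with nonzero derivation. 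To land in degree $2j+2$ rather than $2\genus$, I would instead produce $f$ by taking a product of $\genus$ nonnegative quadrics as in the proof of Lemma~\ref{lem:nonnegativeExistence} only when $\genus = j+1$; more robustly, I multiply the output of Lemma~\ref{lem:nonnegativeExistence} by a power of $x_0^2+x_1^2+\dotsb+x_n^2$ (strictly positive on $X(\RR)$) to adjust the degree to $2j+2$ whenever $j+1 \geq \genus$, and this is exactly the regime forced by the adjunction/genus computation on $Y$ (since $Y$ has effective anticanonical divisor and $X \in |jH|$, the arithmetic genus satisfies $\genus = 1 + \tfrac12 X\cdot(X+K_Y) \leq 1 + \tfrac12 X \cdot X = 1 + \tfrac12 j^2 H^2$, while a cleaner bound $\genus \leq j+1$ should follow from a direct Riemann--Roch / adjunction estimate using $-K_Y$ effective). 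I would state this genus bound as the first lemma of the proof.

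For the degree inequality, suppose $g \in \Sos_{X,2k}$ is nonzero with $fg \in \Sos_{X,2j+2k+2}$. Applying Lemma~\ref{lem:derivation} at each solitary point $p_i$ — whose hypotheses hold since $f(p_i) = 0$ and $\der_{p_i}(f) \neq 0$ by construction — gives $g(p_i) = 0$ and $\der_{p_i}(g) = 0$ for all $1 \leq i \leq \genus$. Writing $g = \sum_\ell g_\ell^2$ with $g_\ell \in R_{2k}$... wait, $g_\ell \in R_k$; then $g_\ell(p_i) = 0$ and the Leibniz rule forces $\der_{p_i}(g_\ell) = 0$ for every $\ell$ and every $i$. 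So each $g_\ell$ vanishes to order at least $2$ at each of the $\genus$ solitary points, i.e.\ $g_\ell$ lies in $H^0\bigl(Y, \mathcal{O}_Y(kH) \otimes \mathcal{I}_Z^2\bigr)$ where $Z = \{p_1, \dotsc, p_\genus\}$. The key numerical point is then that such a section, restricted to the rational curve $X$, cannot exist unless $kH$ has large enough degree on $X$: on the normalization $\PP^1 \to X$, each solitary point pulls back to a conjugate pair of points, and vanishing to order $2$ at the singular point translates into vanishing conditions on the pullback. Pulling $g_\ell$ back to $\PP^1$ gives a form of degree $d k$ (since $\deg_X(kH) = dk$) vanishing at $2\genus$ points (counted with conjugate pairs) coming from the $\genus$ solitary points. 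For this to be a nonzero section we need $dk \geq 2\genus$, which is exactly $k \geq \frac{2\genus}{d}$. The one case to dispatch is $g \equiv 0$ on $X$, which is excluded by hypothesis.

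The main obstacle I anticipate is making the vanishing-order bookkeeping at the solitary points precise: a solitary point is a singular point of $X$, so ``$\der_p(g) = 0$'' on the Zariski cotangent space of the singular curve must be correctly translated into a vanishing condition after pulling back along the normalization $\nu\colon \PP^1 \to X$. The cleanest route is to work with the conjugate pair of points $q, \bar q \in \PP^1$ lying over each solitary point $p$: since $g_\ell(p) = 0$ one gets $\nu^*g_\ell(q) = \nu^*g_\ell(\bar q) = 0$, accounting for $2$ zeros per solitary point, hence $\deg \nu^*g_\ell = dk \geq 2\genus$ provided some $g_\ell$ is nonzero on $X$. (One does not even need the derivation vanishing for the counting — simple vanishing at the $2\genus$ pullback points suffices — so the $\der_{p_i}(g) = 0$ conclusion of Lemma~\ref{lem:derivation} is a bonus rather than a necessity here, which simplifies the argument.) I would double-check whether the bound should be $dk \geq 2\genus$ or $dk > 2\genus - $ something depending on whether the $2\genus$ pullback points are forced distinct; since distinct solitary points give distinct (and non-conjugate across different $p_i$) pullback points, they are indeed $2\genus$ distinct points on $\PP^1$, giving the stated inequality.
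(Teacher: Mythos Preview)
Your Part~1 has a genuine gap: the hoped-for bound $\genus \leq j+1$ is false. Take $Y = \PP^2$ with $H$ a line; then $-K_Y = 3H$ is effective, and a degree-$j$ plane curve has $\genus = \binom{j-1}{2}$, which exceeds $j+1$ for all $j \geq 6$. Your own estimate $\genus \leq 1 + \tfrac12 j^2 H^2$ is correct (since $jH$ is nef and $-K_Y$ is effective, $jH \cdot K_Y \leq 0$), but this is quadratic in $j$, not linear, so padding the output of Lemma~\ref{lem:nonnegativeExistence} up to degree $2j+2$ is impossible in general. The paper's construction is the missing idea: one first shows via Riemann--Roch and Serre duality that $\dim H^0\bigl(Y,\mathcal{O}_Y(jH+K)\bigr) \geq \genus$, and then that the $\genus$ solitary points impose independent conditions on these adjoint sections (by an intersection-number argument on the blowup at $\genus-1$ of them). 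This produces $f_1 \in H^0\bigl(Y,\mathcal{O}_Y(jH+K)\bigr)$ vanishing at $p_2,\dotsc,p_{\genus}$ but not at $p_1$, and a perturbation $f_2$ nonvanishing on $X(\RR)$. Choosing $f_3 \in H^0\bigl(Y,\mathcal{O}_Y(-K)\bigr)$ (effective anticanonical) and $f_4 \in H^0\bigl(Y,\mathcal{O}_Y(2H)\bigr)$ from Lemma~\ref{lem:nonnegativeExistence} applied to the single point $p_1$, the product $f = f_1 f_2 f_3^2 f_4$ lands in degree $2(jH+K) + 2(-K) + 2H = (2j+2)H$ and has the required sign and vanishing. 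The adjoint linear system is what makes the degree bookkeeping close up.

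Your Part~2, by contrast, is correct and is a genuinely different and cleaner route than the paper's. A couple of your intermediate claims are unjustified---the Leibniz rule does \emph{not} force $\der_{p_i}(g_\ell) = 0$ from $\der_{p_i}(g) = 0$ and $g_\ell(p_i)=0$, and ``order $\geq 2$'' for each $g_\ell$ is not established---but you correctly observe that you don't need them: simple vanishing $g_\ell(p_i)=0$ already gives $\nu^*g_\ell$ vanishing at both points of the conjugate pair over each $p_i$, hence at $2\genus$ distinct points of $\PP^1$, forcing $dk \geq 2\genus$. The paper instead lifts $g$ to a curve $Z \subset Y$ in $|2kH|$, uses Lemma~\ref{lem:derivation} (including $\der_{p_i}(g)=0$, which transfers to $Y$ because the Zariski cotangent spaces of $X$ and $Y$ agree at a node) to get $\operatorname{mult}_{p_i}(Z) \geq 2$, and computes $\widehat{X}\cdot\widehat{Z} \geq 0$ on the blowup of $Y$ at the $\genus$ nodes to reach the same inequality. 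Your normalization argument avoids the blowup entirely and uses only the simple-vanishing half of Lemma~\ref{lem:derivation}; the paper's argument uses the full strength of that lemma but stays on the surface.
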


\begin{proof}
  Let $K$ be the canonical divisor on $Y$.  Since $X$ is projective and the
  divisor $jH$ is effective, Serre Duality (see Theorem~I.11 in
  \cite{Beauville}) shows that
  $H^2\bigl(Y, \mathcal{O}_Y(jH+K) \bigr) = H^0(Y, \mathcal{O}_Y(-jH) \bigr) =
  0$.
  As $Y$ is rational and the irregularity and geometric genus of a surface are
  birational invariants (see Proposition~III.20 in \cite{Beauville}), we have
  $H^1(Y, \mathcal{O}_Y) = 0$ and
  $H^2(Y, \mathcal{O}_Y) = H^0 \bigr(Y, \mathcal{O}_Y(K) \bigr) = 0$, so the
  Euler--Poincar\'e characteristic $\chi( \mathcal{O}_Y)$ equals $1$.
  Applying the Riemann--Roch Theorem (see Theorems I.12 and I.15 in
  \cite{Beauville}), it follows that
  \[
  \chi\bigl( \mathcal{O}_Y(jH+K) \bigr) = \chi(\mathcal{O}_Y) + \tfrac{1}{2}
  \bigl( (jH+K)^2 - (jH+K).K \bigr) = 1 + \tfrac{1}{2}\bigl( (jH)^2 +(jH).K) =
  \genus \, ,
  \]
  and we deduce that
  $\dim H^0\bigl(Y, \mathcal{O}_Y(jH+K) \bigr) \geq \genus$. 

  We first prove that the solitary points impose independent conditions by
  verifying that there is no nonzero section of $\mathcal{O}_Y(jH+K)$ which
  vanishes at any $\genus -1$ solitary points of $X$ and at any additional
  point $q \in X$. Suppose there exists a nonzero section of
  $\mathcal{O}_Y(jH+K)$ which vanishes at $\genus -1$ solitary points of $X$
  and an additional point $q \in X$.  Let $\widetilde{Y}$ be the blowing up of
  the surface $Y$ at $\genus - 1$ solitary points and the point $q$; the
  corresponding exceptional divisors are $E_1, E_2, \dotsc, E_{\genus -1}, F$.
  If this hypothetical section vanishes at the chosen $\genus - 1$ nodes of
  $X$ and the point $q \in X$ with multiplicities $m_i$ and $r$ respectively,
  then the line bundle
  \[
  \mathcal{O}_{\widetilde{Y}}(jH + K - m_1E_1 - m_2E_2 - \dotsb -
  m_{\genus-1} E_{\genus -1} - rF)
  \]
  restricted to the proper transform of $X$ in $\widetilde{Y}$ would also have
  a section.  However, the degree of the restriction (see Lemma~I.6 in
  \cite{Beauville}) equals
  \begin{multline*}
    (jH + K - m_1E_1 - m_2E_2 - \dotsc - m_{\genus-1} E_{\genus -1} - rF).(jH
    - 2E_1 - 2 E_2 - \dotsb - 2E_{\genus -1} - F) \\
    = 2(\genus -1) - 2(m_1+m_2 + \dotsb + m_{\genus -1}) -r < 0 \, ,
  \end{multline*}
  which yields the required contradiction.

  To prove the first part, choose a nonzero section
  $f_1 \in H^0\bigl(Y, \mathcal{O}_Y(jH+K) \bigr)$ that vanishes at the
  solitary points $p_2, p_3, \dotsc, p_{\genus}$.  The previous paragraph
  ensures that $f_1(p_1) \neq 0$.  Because the solitary points
  $p_1, p_2, \dotsc, p_{\genus}$ are isolated and imposed independent
  conditions, there exists a nearby section
  $f_2 \in H^0\bigl(Y, \mathcal{O}_Y(jH+K) \bigr)$, a small perturbation of
  $f_1$, that does not vanish at any point in $X(\RR)$.  Since the
  anti-canonical divisor $-K$ is effective, we may also choose a nonzero
  section $f_3 \in H^0\bigl( Y, \mathcal{O}_Y(-K) \bigr)$.  By construction,
  the section $f_1^{}f_2^{}f_3^2 \in H^0 \bigl( Y, \mathcal{O}_Y(2jH) \bigr)$
  is greater than or equal to zero at all points in
  $X(\RR) \setminus \{ p_1 \}$; see Section~5 in \cite{BSV} for more on the
  sign of a section.  Applying Lemma~\ref{lem:nonnegativeExistence}, there
  exists $f_4 \in H^0 \bigl( Y, \mathcal{O}_Y(2H) \bigr)$ such that
  $f_4(p_1) = 0$ and $\der_{p_1}(f_4) \neq 0$.  Hence, the section
  $f \coloneq f_1^{} f_2^{} f_3^2 f_4^{} \in H^0 \bigl( Y,
  \mathcal{O}_Y(2(j+1)H) \bigr)$, which is the restriction to $Y$ of a
  hypersurface of degree $2j+2$ in $\PP^n$, is nonnegative on $X$ and
  satisfies $f(p_i) = 0$ and $\der_{p_i}(f) \neq 0$ for all
  $1 \leq i \leq \genus$.

  For the second part, consider a nonzero multiplier $g \in \Sos_{X,2k}$ such
  that $fg \in \Sos_{X,2j+2k}$.  Lemma~\ref{lem:derivation} establishes that
  $g(p_i) = 0$ and $\der_{p_i}(g) = 0$ for $1 \leq i \leq j$.  Fix an element
  $\tilde{g}$ of degree $2k$ in the $\ZZ$-graded coordinate ring of $Y$ that
  maps to $g \in R_{2k}$ under the canonical quotient homomorphism and
  consider the curve $Z \subset Y$ defined by $\tilde{g}$.  Since the element
  $g$ is nonzero in $R_{2k}$, the curve $Z$ does not contain the curve $X$.
  Let $\widehat{Y}$ be the blowing up of the surface $Y$ at the $\genus$
  solitary points $p_1, p_2, \dotsc, p_{\genus}$ and let
  $E_1, E_2, \dotsc, E_{\genus}$ be the corresponding exceptional divisors in
  $\widehat{Y}$.  The proper transforms $\widehat{X} \subset \widehat{Y}$ and
  $\widehat{Z} \subset \widehat{Y}$ of the curves $X \subset Y$ and
  $Z \subset Y$ are linearly equivalent to the divisor classes
  $D_{\widehat{X}} \coloneq eH - 2E_1 -2 E_2 - \dotsb -2 E_{\genus}$ and
  $D_{\widehat{Y}} \coloneq 2kH - m_1 E_1 - m_2 E_2 - \dotsb - m_{\genus}
  E_{\genus}$ for some $m_i \geq 2$.  Since $\widehat{X}$ is irreducible, the
  degree of the line bundle $\mathcal{O}_{\widehat{X}}(D_Y)$ is nonnegative.
  Hence, we obtain
  $0 \leq D_{\widehat{Y}} . D_{\widehat{X}} = 2ekH^2 - 2(m_1 + m_2 + \dotsb +
  m_{\genus}) \leq 2 k d - 4\genus$, which yields $k \geq \frac{2 \genus}{d}$.
\end{proof}

\begin{remark}
  By modifying the third paragraph in the proof of
  Proposition~\ref{pro:lower}, one can obtain slightly better bounds when the
  canonical divisor $K$ is a multiple of the hyperplane section $H$.  In
  particular, this applies for $Y = \PP^2$.
\end{remark}

Although Proposition~\ref{pro:lower} is the latent source for our sharpness
results, it is technically difficult to apply because of its hypotheses.  To
address this challenge, we exhibit the appropriate rational curves on toric
surfaces.  To be more precise, consider a smooth convex lattice polygon
$Q \subset \RR^2$ and its associated nonsingular toric surface $Y_Q$.  Fix a
cyclic ordering for the edges of $Q$, let $u_1, u_2, \dotsc, u_m \in \ZZ^2$ be
the corresponding primitive inner normal vectors to the edges, and let
$D_1, D_2, \dotsc, D_m$ be the corresponding irreducible torus-invariant
divisors on $Y_Q$.  The anti-canonical divisor on $Y_Q$ is the effective
divisor $D_1 + D_2 + \dotsb + D_m$. From the canonical presentation for the
convex polytope $Q = \{v \in \RR^2 : \text{$\langle v,u_i \rangle \geq -a_i$
  for $1 \leq i \leq m$} \}$, we obtain the very ample divisor
$A_Q \coloneq a_1 D_1 + a_2D_2 + \dotsb + a_m D_m$ on $Y_Q$.  For more background on
toric geometry, see Section~2.3 and Section~4.2 in \cite{CLS}.

As in Subsection~2.2 in \cite{KO}, we call the real connected components of a
curve $X \subset Y_Q$ \define{ovals} and treat isolated real points as
degenerate ovals.  Following Definition~8 in \cite{Br}, a \define{Harnack
  curve} $X \subset Y_Q$ is the image of a real morphism $\xi \colon C \to
Y_Q$ satisfying three conditions:
\begin{enumerate}[\upshape (1)]
\item the smooth real curve $C$ has the maximal number of ovals (namely, one
  more than the genus of the curve $C$);
\item there is a distinguished oval in $C(\RR)$ containing disjoint arcs
  $\Gamma_1, \Gamma_2, \dotsc, \Gamma_m$ such that, for all $1 \leq i \leq m$,
  we have $\xi^{-1}(D_i) \subseteq \Gamma_j$; and
\item the cyclic orientation on the arcs induced by the distinguished oval is
  exactly $[\Gamma_1, \Gamma_2, \dotsc, \Gamma_m]$.
\end{enumerate}
These special curves are germane because Theorem~10 in \cite{Br} establishes
that all of the singularities on a Harnack curve are solitary points.  By
modifying the technique in Subsection~4.1 of \cite{KO} for $\PP^2$, we
construct rational Harnack curves on smooth projective toric surfaces.

\begin{proposition}
  \label{pro:Harnack}
  If $Q \subset \RR^2$ is a smooth two-dimensional lattice polygon, then there
  exists a rational Harnack curve on the toric variety $Y_Q$ which is linearly
  equivalent to the associated very ample divisor $A_Q$ and has arithmetic
  genus equal to the number of interior lattice points in $Q$.
\end{proposition}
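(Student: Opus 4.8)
The plan is to read off the arithmetic genus from adjunction, which reduces the proposition to exhibiting a single rational Harnack curve in the linear system $|A_Q|$, and then to produce such a curve as the image of an explicit real morphism $\xi \colon \PP^1 \to Y_Q$ whose real locus is embedded as the distinguished oval meeting the torus-invariant divisors in the required cyclic pattern. Once $\xi$ is in hand, Brugall\'e's Theorem (Theorem~10 in \cite{Br}) identifies the singularities of the image as solitary points, and a $\delta$-invariant count pins down their number.

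For the genus, write $K := -(D_1 + D_2 + \dotsb + D_m)$ for the canonical divisor on the nonsingular toric surface $Y_Q$, and recall that $A_Q \cdot D_i$ equals the lattice length $\ell_i$ of the $i$-th edge of $Q$; see Section~6.4 in \cite{CLS}.  Since $A_Q$ is very ample, Bertini's Theorem shows that a general member of $|A_Q|$ is a smooth curve, necessarily of genus $\tfrac{1}{2}\bigl( A_Q \cdot (A_Q + K) \bigr) + 1$.  Expanding $A_Q \cdot (A_Q+K) = A_Q^2 - (\ell_1 + \ell_2 + \dotsb + \ell_m)$ and invoking Pick's Theorem for the lattice polygon $Q$ shows that this genus equals $\#\bigl( \mathrm{int}(Q) \cap \ZZ^2 \bigr)$; equivalently, the interior lattice points of $Q$ index a monomial basis of $H^0\bigl( Y_Q, \mathcal{O}_{Y_Q}(A_Q+K) \bigr)$, whose dimension is the genus of a smooth curve in $|A_Q|$ because $H^1\bigl(Y_Q, \mathcal{O}_{Y_Q}\bigr) = 0$.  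As arithmetic genus is constant in the flat family $|A_Q|$, every curve linearly equivalent to $A_Q$ has arithmetic genus $\genus = \#\bigl( \mathrm{int}(Q) \cap \ZZ^2 \bigr)$.  It therefore suffices to exhibit one rational Harnack curve linearly equivalent to $A_Q$.

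To construct it, I adapt the rational parametrization used for $\PP^2$ in Subsection~4.1 of \cite{KO}.  Label the edges of $Q$ cyclically as $E_1, E_2, \dotsc, E_m$ with inner normals $u_1, u_2, \dotsc, u_m$ and lattice lengths $\ell_1, \ell_2, \dotsc, \ell_m$, and set $N := \ell_1 + \ell_2 + \dotsb + \ell_m$.  Choose $N$ distinct real parameters on $\PP^1(\RR)$ and partition them, consistently with the cyclic order of $\PP^1(\RR)$, into consecutive blocks $S_1, S_2, \dotsc, S_m$ with $\abs{S_i} = \ell_i$.  From this data one writes down a real rational map from $\PP^1$ to the dense torus $(\CC^*)^2 \subset Y_Q$, whose extension $\xi \colon \PP^1 \to Y_Q$ should, for a generic choice of the parameters, satisfy: the scheme $\xi^{-1}(D_i)$ equals $S_i$ and consists of $\ell_i$ reduced points; the map $\xi$ is birational onto its image $X$; the Laurent polynomial defining $X$ has Newton polygon precisely $Q$, so that $X$ is linearly equivalent to $A_Q$; and $\xi$ restricts to a homeomorphism from $\PP^1(\RR)$ onto an oval of $X(\RR)$ carrying disjoint arcs $\Gamma_1, \Gamma_2, \dotsc, \Gamma_m$ with $\xi^{-1}(D_i) \subset \Gamma_i$ appearing in the cyclic order $[\Gamma_1, \Gamma_2, \dotsc, \Gamma_m]$.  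This is the step where the technique of \cite{KO} is genuinely needed, and it is the main obstacle: one must verify that a single generic choice of parameters simultaneously produces a birational $\xi$, an image whose Newton polygon is exactly $Q$ rather than a proper subpolygon, transversal intersections with the $D_i$ in the correct numbers, and precisely the arc configuration demanded by conditions~(1)--(3) in the definition of a Harnack curve.

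Granting this construction, the morphism $\xi$ and the curve $X = \xi(\PP^1)$ satisfy that definition: the source $C = \PP^1$ has genus $0$ and its connected real locus $\PP^1(\RR)$ has one more component than the genus, so condition~(1) holds, while conditions~(2) and~(3) hold by the choice of the arcs $\Gamma_i$ and their cyclic order.  Since $X$ is the image of $\PP^1$ under a nonconstant morphism it is rational, so its geometric genus is $0$ and the sum of the $\delta$-invariants of its singular points equals $\genus(X) = \genus$; by Theorem~10 in \cite{Br}, every such singular point is a solitary point, hence an ordinary double point of $\delta$-invariant $1$, so $X$ has exactly $\genus$ solitary points.  This produces a rational Harnack curve linearly equivalent to $A_Q$ whose arithmetic genus is $\genus = \#\bigl(\mathrm{int}(Q) \cap \ZZ^2 \bigr)$, as required.
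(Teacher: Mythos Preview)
Your overall strategy matches the paper's: compute the arithmetic genus of any member of $|A_Q|$ by adjunction/Pick, then exhibit a single rational Harnack curve in $|A_Q|$ as the image of an explicit real morphism $\xi \colon \PP^1 \to Y_Q$ whose preimages of the $D_i$ land in prescribed arcs of $\PP^1(\RR)$ in the correct cyclic order. The final paragraph on solitary points and $\delta$-invariants is correct but superfluous for this proposition; that analysis is used elsewhere in the paper, not here.

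The genuine gap is precisely the step you flag as ``the main obstacle.'' You assert that from the choice of $N$ real parameters one ``writes down a real rational map from $\PP^1$ to the dense torus,'' that this extends to $\xi \colon \PP^1 \to Y_Q$, and that for generic parameters the image lies in $|A_Q|$ and meets each $D_i$ in the right scheme --- but you neither write the map down nor verify any of these claims. Invoking \cite{KO} does not close the gap, because Subsection~4.1 there treats $\PP^2$, and extending the argument to an arbitrary smooth toric surface is exactly the content one must supply.

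The paper resolves this via Cox's functor-of-points for smooth toric varieties (Theorem~1.1 in \cite{Cox}): a morphism $\PP^1 \to Y_Q$ is specified by line bundles $\mathcal{O}_{\PP^1}(e_i)$ with $e_i = A_Q \cdot D_i$ and sections $s_i \in H^0\bigl(\PP^1,\mathcal{O}_{\PP^1}(e_i)\bigr)$, subject to a compatibility condition on the degrees and a non-degeneracy condition on the sections. The compatibility $\sum_i e_i u_i = 0$ is exactly the Divergence Theorem (or the Minkowski relation) for the normal fan of $Q$, and non-degeneracy is automatic once the $e_i$ points chosen in the arc $\Gamma_i$ are all distinct across $i$. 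Taking $s_i = \prod_{j=1}^{e_i}(x_0 - c_{i,j} x_1)$ with $[c_{i,j}:1] \in \Gamma_i$ then yields $\xi$ directly, with $\xi^{-1}(D_i)$ equal to the chosen points and $\xi^*\mathcal{O}_{Y_Q}(D_i) \cong \mathcal{O}_{\PP^1}(e_i)$, so that $X := \xi(\PP^1)$ is linearly equivalent to $A_Q$ by construction. This replaces your appeal to genericity and Newton-polygon bookkeeping with a single clean reference, and it is the missing ingredient in your argument.
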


\begin{proof}
  Following \cite{Cox}, a map from $\PP^1$ to the smooth toric variety
  $Y_Q$ is determined by a collection of line bundles and sections on $\PP^1$
  that satisfy certain compatibility and non-degeneracy conditions.  To
  describe the required map, fix disjoint arcs
  $\Gamma_1, \Gamma_2, \dotsc, \Gamma_m$ on the circle $\PP^1(\RR)$ such that
  the induced cyclic orientation is $[\Gamma_1, \Gamma_2, \dotsc, \Gamma_m]$.
  The intersection product $e_i \coloneq A_Q \cdot D_i$, for each $1 \leq i \leq m$,
  equals the normalized lattice distance of the corresponding edge in the
  polytope $Q$.  The Divergence Theorem
  shows that
  $e_1 \langle v, u_1 \rangle + e_2 \langle v, u_2 \rangle + \dotsb + e_m
  \langle v, u_m \rangle = 0$
  for all $v \in \ZZ^2$, so the line bundles
  $\mathcal{O}_{\PP^1}(e_1), \mathcal{O}_{\PP^1}(e_2), \dotsc,
  \mathcal{O}_{\PP^1}(e_m)$
  satisfy the compatibility condition in Definiton~1.1 in \cite{Cox}.  For all
  $1 \leq i \leq m$, choose distinct points
  $[c_{i,1}:1], [c_{i,2}:1], \dotsc, [c_{i,e_i}:1] \in \Gamma_i$.  Identifying
  global sections of $\mathcal{O}_{\PP^1}(e_i)$ with homogeneous polynomials in
  $\CC[x_0, x_2]_{e_i}$, we obtain the real sections
  $\prod_{j=1}^{e_i} (x_0 - c_{i,j} x_1) \in H^0 \bigl( \PP^1,
  \mathcal{O}_{\PP^1}(e_i) \bigr)$.
  Since we chose distinct points, no two sections vanish at the same point in
  $\PP^1$, so these sections satisfy the non-degeneracy condition in
  Definition~1.1 in \cite{Cox}.  Hence, Theorem~1.1 in \cite{Cox} establishes
  that these line bundles and sections determine a real morphism
  $\xi \colon \PP^1 \to Y_Q$ such that
  $\xi^{-1}(D_i) = \{ [c_{i,1}:1], [c_{i,2}:1], \dotsc, [c_{i,e_i}:1] \}$ for
  all $1 \leq i \leq m$.  In other words, the image of $\xi$ is a rational
  Harnack curve $X \subset Y_Q$.  By construction, the curve $X$ is also
  linearly equivalent to the divisor $A_Q$.  Hence, Proposition~10.5.8 in
  \cite{CLS} proves that the arithmetic genus of $X$ equals the number of
  interior lattice points in $Q$.
\end{proof}

Having assembled the necessary prerequisites, we now describe our lower bound
on the degrees of sum-of-squares multipliers on curves.

\begin{theorem}
  \label{thm:lower}
  For all $j \geq 2$, there exist smooth curves $X \subset \PP^n$ and elements
  $f \in \Pos_{X,2j}$ such that the cones $\Sos_{X,2j+2k}$ and
  $f \cdot \Sos_{X,2k}$ are well-separated for all $k < \frac{2 \genus}{d}$
  where $d$ and $\genus$ are the degree and genus of $X$ respectively.
\end{theorem}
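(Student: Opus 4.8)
The plan is to build a suitable \emph{singular} rational curve out of our two propositions on Harnack curves and then to deform it into a smooth curve by a semicontinuity argument. First I would fix a smooth two-dimensional lattice polygon $Q \subset \RR^2$ whose dilate $(j-1)Q$ has at least one interior lattice point, and let $Y := Y_Q \subseteq \PP^n$ be the associated nonsingular projective toric surface embedded by the very ample divisor $A_Q$, so that $H := A_Q$ is a hyperplane section of $Y$ and the anti-canonical divisor $D_1 + \dotsb + D_m$ is effective. Applying Proposition~\ref{pro:Harnack} to the smooth polygon $(j-1)Q$ produces a rational Harnack curve $X_0 \subseteq Y_{(j-1)Q} = Y$ linearly equivalent to $A_{(j-1)Q} = (j-1)H$ and of arithmetic genus $\genus$ equal to the number of interior lattice points of $(j-1)Q$. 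Since every singularity of a Harnack curve is a solitary point, hence an ordinary double point, and a rational curve of arithmetic genus $\genus$ with only nodes has exactly $\genus$ of them, the curve $X_0$ has precisely $\genus$ solitary points $p_1, p_2, \dotsc, p_{\genus}$ and degree $d = (j-1)A_Q^2$. The image of the distinguished oval meets $X_0(\RR)$ in a smoothly embedded circle disjoint from the $p_i$, so $X_0(\RR)$ is one-dimensional and $X_0$ is totally real.

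Next I would apply Proposition~\ref{pro:lower} with $Y$, the hyperplane section $H$, and the positive integer $j-1$ in place of its ``$j$'' — this is where the hypothesis $j \geq 2$ enters. This yields an element $f_0 \in \Pos_{X_0, 2j}$, the restriction to $Y$ of a form $\tilde F_0 \in S_{2j}$, with $f_0(p_i) = 0$ and $\der_{p_i}(f_0) \neq 0$ for all $i$, and with the property that any nonzero $g \in \Sos_{X_0, 2k}$ satisfying $f_0 g \in \Sos_{X_0, 2j+2k}$ forces $k \geq \tfrac{2\genus}{d}$. Because $X_0$ is totally real, Corollary~\ref{cor:variety} shows $\Sos_{X_0, 2j+2k}$ and $\Sos_{X_0, 2k}$ are pointed, and because $X_0$ is integral the nonzero $f_0$ is a nonzerodivisor, so $f_0 \cdot \Sos_{X_0, 2k}$ is pointed as well; hence for every $k < \tfrac{2\genus}{d}$ the cones $\Sos_{X_0, 2j+2k}$ and $f_0 \cdot \Sos_{X_0, 2k}$ meet only at the origin and are therefore well-separated.

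The substance of the argument is to transfer this to a nearby smooth curve. The curve $X_0$ is the zero scheme of a section $s_0 \in H^0\bigl(Y, \mathcal{O}_Y((j-1)H)\bigr)$, and smooth irreducible members of the complete linear system $\abs{(j-1)H}$ form a Zariski-dense, hence Euclidean-dense, subset. For a section $s_\varepsilon$ near $s_0$ defining a smooth curve $X_\varepsilon \subseteq Y$, the locus $X_\varepsilon(\RR)$ contains a small deformation of the circle $X_0(\RR) \setminus \{p_1, \dotsc, p_{\genus}\}$ (so $X_\varepsilon$ is smooth and totally real) together with, near each $p_i$, at most a small real oval of diameter $O(\sqrt{\varepsilon})$ on which $f_0$ varies by $O(\sqrt{\varepsilon})$ since $\der_{p_i}(f_0) \neq 0$. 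I would therefore fix a positive-definite form $\tilde C \in S_{2j}$, a suitable rescaling of $(x_0^2 + \dotsb + x_n^2)^j$, and set $f_\varepsilon := \bigl(\tilde F_0 + \sqrt{\varepsilon}\, \tilde C\bigr)\big|_{X_\varepsilon}$; for $\varepsilon$ small the term $\sqrt{\varepsilon}\,\tilde C$ dominates both the $O(\sqrt{\varepsilon})$ oscillation of $f_0$ on the small ovals and its $O(\varepsilon)$ change along the deformed circle, so $f_\varepsilon \in \Pos_{X_\varepsilon, 2j}$, while $f_\varepsilon \to f_0$ in $S_{2j}$ as $\varepsilon \to 0$.

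Finally I would run a limiting argument. Every two-dimensional lattice polygon is normal, so the embedding $Y_Q \subseteq \PP^n$ is projectively normal and, for each $i$, the degree-$i$ piece of the coordinate ring of $X_\varepsilon$ is canonically $H^0\bigl(Y, \mathcal{O}_Y(iH)\bigr)$ modulo the image of $s_\varepsilon \cdot H^0\bigl(Y, \mathcal{O}_Y((i-j+1)H)\bigr)$, of dimension independent of $\varepsilon$; thus the coordinate rings of the $X_\varepsilon$ assemble into vector bundles over a Euclidean neighbourhood of $[X_0]$ in $\abs{(j-1)H}$, and a degree-$2k$ sum-of-squares representation uses a uniformly bounded number of terms. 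If, for one of the finitely many integers $k$ with $0 \leq k < \tfrac{2\genus}{d}$ and some sequence $\varepsilon_t \to 0$, the cones $\Sos_{X_{\varepsilon_t}, 2j+2k}$ and $f_{\varepsilon_t} \cdot \Sos_{X_{\varepsilon_t}, 2k}$ were not well-separated, then picking nonzero $g_{\varepsilon_t} \in \Sos_{X_{\varepsilon_t}, 2k}$ with $f_{\varepsilon_t} g_{\varepsilon_t} \in \Sos_{X_{\varepsilon_t}, 2j+2k}$, lifting the square roots to $H^0\bigl(Y, \mathcal{O}_Y(kH)\bigr)$ orthogonally to $s_{\varepsilon_t} \cdot H^0\bigl(Y, \mathcal{O}_Y((k-j+1)H)\bigr)$, normalizing, and passing to a subsequential limit would produce a nonzero $g_0 \in \Sos_{X_0, 2k}$ with $f_0 g_0 \in \Sos_{X_0, 2j+2k}$; since $f_0$ is a nonzerodivisor, $f_0 g_0 \neq 0$, contradicting Proposition~\ref{pro:lower}. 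Hence, for each such $k$, well-separation holds for all sufficiently small $\varepsilon$; taking $\varepsilon$ below every one of these finitely many thresholds, the smooth totally-real curve $X := X_\varepsilon$ and the element $f := f_\varepsilon \in \Pos_{X, 2j}$ satisfy the conclusion, with degree $d$ and arithmetic genus $\genus$ inherited from $X_0$. I expect the main obstacle to be precisely this last step together with the end of the previous one: controlling the coordinate rings and the sum-of-squares cones uniformly as the smooth curves $X_\varepsilon$ degenerate onto the singular $X_0$ — which is why the deformation is carried out within a very ample linear system on the fixed surface $Y_Q$ and why projective normality of $Y_Q$ is invoked — while simultaneously engineering a perturbation of $f_0$ that stays nonnegative on the expanding real locus $X_\varepsilon(\RR)$ and yet converges to $f_0$.
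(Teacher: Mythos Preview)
Your architecture matches the paper's exactly: fix a smooth lattice polygon $Q$, build the rational singular Harnack curve $X_0$ in $\lvert (j-1)A_Q\rvert$ via Proposition~\ref{pro:Harnack}, invoke Proposition~\ref{pro:lower} to get $f_0$ with the separation property on $X_0$, then deform to a nearby smooth member of the same linear system. The divergence is in \emph{how} the well-separation is transported to $X_\varepsilon$. The paper does it constructively on the dual side: it writes a strict separator $\ell$ for $\Sos_{X_0,2j+2k}$ and $f_0\cdot\Sos_{X_0,2k}$ as a combination $\sum\lambda_ip_i^*$ of point evaluations at points $p_i\in X_0$, replaces each $p_i$ by the nearby point $q_i\in X_\varepsilon$, and checks via lower semicontinuity of rank (plus $\HF_{X_\varepsilon}=\HF_{X_0}$) that the resulting $\ell_\varepsilon$ is again a strict separator. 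Your route is the dual contradiction: assume multipliers $g_{\varepsilon_t}$ exist, normalize and extract a subsequential limit $g_0$, and contradict Proposition~\ref{pro:lower} on $X_0$. This is a legitimate alternative, but its one real cost is the compactness step you flag: to pass the relation $f_{\varepsilon_t}g_{\varepsilon_t}=\sum q_{l,t}^2$ to the limit you need the orthogonally lifted $\tilde q_{l,t}$ to stay bounded, which is not automatic from boundedness of the product. The fix is to use pointedness of $\Sos_{X_0,2j+2k}$ (Corollary~\ref{cor:variety}): if $\sum\lVert\tilde q_{l,t}\rVert^2\to\infty$, renormalizing and passing to a limit in $K_0^\perp$ produces a nontrivial sum of squares equal to $0$ on $X_0$, a contradiction. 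On the $f$ side, the paper's move is cleaner than your $\sqrt{\varepsilon}$ construction: since well-separation is Euclidean-open in $f\in R_{2j}$, one first nudges $f_0$ into the interior of $\Pos_{X_0,2j}$ (strictly positive on $X_0(\RR)$, including at each solitary $p_i$) and then simply restricts the same $\tilde f$ to $X_\varepsilon$; strict positivity persists by continuity, so no $\varepsilon$-dependent correction is needed.
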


\begin{proof}
  Fix a smooth two-dimensional lattice polytope $Q$ and let $A_Q$ be the
  associated very ample divisor on the smooth toric variety $Y_Q$.  Applying
  Proposition~\ref{pro:Harnack} to the dilated polytope $(j-1)Q$ gives a
  rational Harnack curve $X$ on $Y_Q$ of degree $d$ defined by a section in
  $H^0\bigl(Y_Q, \mathcal{O}_{Y_Q}\bigl((j-1)A_Q\bigr)\bigr)$.  The number of
  singular points on $X$ equals its arithmetic genus $\genus$ and, as
  Theorem~10 in \cite{Br} establishes, all of the singularities on $X$ are
  solitary points.  Hence, Proposition~\ref{pro:lower} shows that there exists
  an element $f \in \Pos_{X,2j}$ such that, for all $k < \frac{2 \genus}{d}$,
  the cones $\Sos_{X,2j+2k}$ and $f \cdot \Sos_{X,2k}$ are well-separated.
  Asserting that the cones $\Sos_{X,2j+2k}$ and $f \cdot \Sos_{X,2k}$ are
  well-separated is an open condition in the Euclidean topology on the element
  $f \in R_{2j}$.  Hence, we may assume that the given element $f$ lies in the
  interior of the cone $\Pos_{X,2j}$.  To finish the proof, we prove that,
  under small real perturbations of both $X$ and $f$, the pertinent cones
  continue to be well-separated.

  We first deform the singular Harnack curve $X$ into a smooth Harnack curve
  $X_\varepsilon$.  For brevity, let $H$ denote the very ample divisor
  $(j-1)A_Q$.  Fix a section
  $g_1 \in H^0\bigl(Y_Q, \mathcal{O}_{Y_Q}(H)\bigr)$ defining $X$ on $Y_Q$.
  Since $H$ is very ample, we may choose a section
  $g_2 \in H^0\bigl(Y_Q, \mathcal{O}_{Y_Q}(H)\bigr)$ that does not vanish at
  any solitary point of $X$, so the quotient $g_1/g_2$ is real-valued on
  $Y_Q \setminus \variety(g_2)$ and every solitary point of $X$ is either a
  local maximum or local minimum.  The product of sections defining the
  irreducible torus-invariant divisors determines a section
  $g_3 \in H^0\bigl(Y_Q, \mathcal{O}_{Y_Q}(-K) \bigr)$ because the canonical
  divisor on the toric variety $Y_Q$ is $K = - D_1 - D_2 - \dotsb - D_m$.  As
  the first paragraph in the proof of Proposition~\ref{pro:lower} establishes,
  the solitary points impose independent conditions on the sections of
  $\mathcal{O}_{Y_Q}(H + K)$.  It follows that there exists a section
  $g_4 \in H^0\bigl(Y_Q, \mathcal{O}_{Y_Q}(H+K) \bigr)$ such that the rational
  function $g_3 g_4 / g_2$ has prescribed values at the solitary points of
  $X$.  In particular, we may choose the section $g_4$ so that $g_3 g_4 / g_2$
  is negative at the local minima of the quotient $g_1/g_2$ and is positive at
  the local maxima of the quotient $g_1/g_2$.  For small enough
  $\varepsilon > 0$, we see that the section $g_1 + \varepsilon g_3g_4$
  defines a smooth Harnack curve $X_\varepsilon$ on $Y_Q$ with arithmetic
  genus $\genus$.  Moreover, the sections defining $X_\varepsilon$ and $X$
  have the same degree, so we have $\HF_{X_\varepsilon}(i) = \HF_{X}(i)$ for
  all $i \in \ZZ$.

  To deform the element $f \in \Pos_{X,2j}$, choose a polynomial
  $\tilde{f} \in S_{2j}$ that maps to $f$ under the canonical quotient
  homomorphism, set $e \coloneq \HF_X(2j+2k)$, and fix points
  $p_1, p_2, \dotsc, p_e$ in $X$ for which the linear functionals
  $p_1^*, p_1^*, \dotsc, p_e^*$, defined by point evaluation, form a basis for
  $R_{2j+2k}^*$.  Since the cones $\Sos_{X,2j+2k}$ and $f \cdot \Sos_{X,2k}$
  are well-separated, there exists a linear functional $\ell \in R_{2j+2k}^*$
  satisfying $\ell(h) > 0$ for all nonzero $h \in \Sos_{X,2j+2k}$ and
  $\ell(h) < 0$ for all nonzero $h \in f \cdot \Sos_{X,2k}$.  Hence, there are
  $\lambda_1, \lambda_2, \dotsc, \lambda_e \in \RR$ such that
  $\ell = \lambda_1 p_1^* + \lambda_2 p_2^* + \dotsb + \lambda_e p_e^*$.  By
  choosing affine representatives
  $\tilde{p}_1, \tilde{p}_2, \dotsc, \tilde{p}_e \in \AA^{n+1}$, we obtain
  $\tilde{\ell} \coloneq \lambda_1\tilde{p}_1^* + \lambda_2 \tilde{p}_2^* + \dotsb +
  \lambda_e \tilde{p}_e^*$ in $S_{2j+2k}^*$.  There are two symmetric forms
  associated to the linear functional $\tilde{\ell}$: the first
  $\sigma_{j+k}^*(\tilde{\ell}) \colon S_{j+k} \otimes_\RR S_{j+k} \to \RR$ is
  defined by
  $\tilde{h}_1 \otimes \tilde{h}_2 \mapsto \tilde{\ell}(\tilde{h}_1
  \tilde{h}_2)$ and the second
  $\tau_{j}^*(\tilde{\ell}) \colon S_k \otimes_\RR S_k \to \RR$ is defined by
  $\tilde{h}_1 \otimes \tilde{h}_2 \mapsto \tilde{\ell}(\tilde{f} \tilde{h}_1
  \tilde{h}_2 )$.  The assertion that $\ell$ is a strict separator for the
  cones $\Sos_{X,2j+2k}$ and $f \cdot \Sos_{X,2k}$ is equivalent to saying
  that the symmetric form $\sigma_{j+k}^*(\tilde{\ell})$ is
  positive-semidefinite with
  $\Ker\bigl( \sigma_{j+k}^*(\tilde{\ell}) \bigr) = (I_{X})_{j+k}$ and the
  symmetric form $\tau_{k}^*(\tilde{\ell})$ is negative-semidefinite with
  $\Ker\bigl( \tau_{k}^*(\tilde{\ell}) \bigr) = (I_{X})_{k}$.  To build the
  applicable linear functional on the deformation $X_\varepsilon$, let
  $q_1, q_2, \dotsc, q_e$ denote the points on $X_\varepsilon$ corresponding
  to the fixed points $p_1, p_2, \dotsc, p_e$ on $X$.  Choose affine
  representatives
  $\tilde{q}_1, \tilde{q}_2, \dotsc, \tilde{q}_e \in \AA^{n+1}$ and consider
  the linear functional
  \[
  \tilde{\ell}_{\varepsilon} \coloneq \lambda_1 \tilde{q}_1^* + \lambda_2
  \tilde{q}_2^* + \dotsb + \lambda_e \tilde{q}_e^* \in S_{2j+2k}^* \, .
  \]
  By construction, we have
  $(I_{X_{\varepsilon}})_{j+k} \subseteq \Ker\bigl(
  \sigma_{j+k}^*(\tilde{\ell}_{\varepsilon}) \bigr)$
  and
  $(I_{X_{\varepsilon}})_{k} \subseteq \Ker\bigl(
  \tau_{k}^*(\tilde{\ell}_{\varepsilon}) \bigr)$.
  For sufficiently small $\varepsilon > 0$, the symmetric forms
  $\sigma_{j+k}^*(\tilde{\ell}_\varepsilon)$ and
  $\tau_{k}^*(\tilde{\ell}_\varepsilon)$ are small perturbations of
  $\sigma_{j+k}^*(\tilde{\ell})$ and $\tau_{k}^*(\tilde{\ell})$ respectively.
  The rank of a symmetric form is lower semicontinuous, so we have both
  $\rank \bigl( \sigma_{j+k}^*(\tilde{\ell}_\varepsilon) \bigr) \geq \rank
  \bigl( \sigma_{j+k}^*(\tilde{\ell}) \bigr)$
  and
  $\rank \bigl( \tau_{k}^*(\tilde{\ell}_\varepsilon) \bigr) \geq \rank \bigl(
  \tau_{k}^*(\tilde{\ell}) \bigr)$.
  Because $\HF_{X_\varepsilon}(k) = \HF_{X}(k)$ and
  $\HF_{X_\varepsilon}(j+k) = \HF_{X}(j+k)$, it follows that
  $\Ker\bigl( \sigma_{j+k}^*(\tilde{\ell}_{\varepsilon}) \bigr) =
  (I_{X_{\varepsilon}})_{j+k}$
  and
  $\Ker\bigl( \tau_{k}^*(\tilde{\ell}_{\varepsilon}) \bigr) =
  (I_{X_{\varepsilon}})_{k}$.
  In addition, being positive-semidefinite or negative-semidefinite is an open
  condition in the Euclidean topology, so the symmetric form
  $\sigma_{j+k}^*(\tilde{\ell}_\varepsilon)$ is positive-semidefinite and
  symmetric form $\tau_{k}^*(\tilde{\ell}_\varepsilon)$ is
  negative-semidefinite.  If $f_\varepsilon$ denotes the image of $\tilde{f}$
  under the canonical quotient map from $S$ to $\ZZ$-graded coordinate ring of
  $X_\varepsilon$, then we conclude that
  $\ell_\varepsilon \coloneq \lambda_1 q_1^* + \lambda_2 q_2^* + \dotsb + \lambda_e
  q_e^*$
  is a strict separator for the cones $\Sos_{X_\varepsilon,2j+2k}$ and
  $f_\varepsilon \cdot \Sos_{X_\varepsilon,2j+2k}$.
\end{proof}

\begin{remark}
  Although the smooth curves constructed in the proof of
  Theorem~\ref{thm:lower} have the maximal number of ovals, this is not
  necessary.  By choosing the section $g_4$ so that $g_3 g_4/ g_2$ is positive
  at some local minima, or negative at some local maxima, of the quotient
  $g_1/g_3$, we can obtain smooth curves for which the number of ovals is
  anywhere between $1$ and one more than the genus.  In particular,
  Theorem~\ref{thm:lower} is remarkably insensitive to the topology of the
  real projective curve.
\end{remark}

\begin{remark}
  By applying the perturbation techniques from the proof of
  Theorem~\ref{thm:lower} to the tricuspidal curve in
  Example~\ref{exa:deltoid}, we see that there are smooth planar curves for
  which the bound in Example~\ref{exa:planar} is tight.
\end{remark}

For the smooth curves created in the proof of Theorem~\ref{thm:lower}, both
the degree and genus can be expressed as a function of the parameter $j$.
From these expressions, we see that, for all $j \geq 2$, there are smooth
curves for which Theorem~\ref{thm:lower} is an exact counterpart to
Corollary~\ref{cor:hilbertRegularity}.

\begin{example}[Curves with sharp bounds]
  \label{exa:sharp}
  Let $Q \subset \RR^2$ be a smooth convex lattice polygon with an interior
  lattice point.  Hence, we obtain a smooth toric variety $Y_Q \subset \PP^n$
  embedded by the very ample line bundle $A_Q$.  The Ehrhart polynomial of $Q$
  equals the Hilbert polynomial of $Y_Q \subset \PP^n$; see Proposition~9.4.3
  in \cite{CLS}.  If $\Area(Q)$ denotes the standard Euclidean area of the
  polygon $Q$ and $\abs{\smash{\partial Q \cap \ZZ^2}}$ counts the number of
  lattice points on its boundary $\partial Q$, then it follows that
  $\HP_{Y_Q}(i) = \Area(Q) i^2 + \tfrac{1}{2} \abs{\smash{\partial Q \cap
      \ZZ^2}} i + 1$; see Proposition~10.5.6 in \cite{CLS}.

  Fix an integer $j$ with $j \geq 2$.  Since the smooth curves $X$ appearing
  in the proof of Theorem~\ref{thm:lower} are defined by a section in
  $H^0\bigl( Y_Q, \mathcal{O}_{Y_Q} \bigl( (j-1)A_Q \bigr) \bigr)$, we have
  \begin{align*}
    \HP_{X}(i) 
    &= \HP_{Y_Q}(i) - \HP_{Y_Q}\bigl(i-(j-1) \bigr) =
      2\Area(Q)(j-1) i + \tfrac{1}{2} \abs{\smash{\partial Q \cap \ZZ^2}}
      (j-1) - \Area(Q)(j-1)^2 \, , 
  \end{align*}
  so the degree and genus of $X$ are $2\Area(Q)(j-1)$ and $\HP_{Y_Q}(1-j)$
  respectively.  Amusingly, we have $\deg(X) = (j-1)\deg(Y_Q)$ and the genus
  equals the number of interior lattice points in the dilate $(j-1)Q$; see
  Theorem~9.4.2 in \cite{CLS}.  In addition, the equation for $\HP_{X}(i)$
  implies that $\rr(X) = j - 1 - m$ where $m$ is the largest nonnegative
  integer such that the dilate $mQ$ does not contain any interior lattice
  points.  Since a smooth polytope has at least three vertices, we have
  $3 \leq \abs{\smash{\partial Q \cup \ZZ^2}}$,
  $1 < \tfrac{1}{2} \abs{\smash{\partial Q \cap \ZZ^2}}(j-1)$, and
  \begin{align*}
    \left\lceil \frac{2 \genus}{d} \right\rceil
    &= \left\lceil \frac{\Area(Q)(j-1)^2 - \tfrac{1}{2}
      \abs{\smash{\partial Q \cap \ZZ^2}}(j-1) +1}{\Area(Q)(j-1)}
      \right\rceil \\
    &\leq (j-1) + \left\lceil \frac{1- \tfrac{1}{2}
      \abs{\smash{\partial Q \cap \ZZ^2}}(j-1)}{\Area(Q)(j-1)} \right\rceil
      \leq j-1 \, .
  \end{align*}
  As $Q$ has at least one interior lattice point, we also have
  $1 \leq \HP_{Y_Q}(-1) = \Area(Q) - \tfrac{1}{2} \abs{\smash{\partial Q \cap
      \ZZ^2}} + 1$, $1 < \Area(Q)$, and
  \[
  \left\lfloor \frac{2 \genus}{d} \right\rfloor \geq (j-1) - \left\lceil
    \frac{ \tfrac{1}{2} \abs{\smash{\partial Q \cap \ZZ^2}}}{\Area(Q)}
  \right\rceil + \left\lfloor \frac{1}{\Area(Q)(j-1)} \right\rfloor = j-2 \, .
  \]
  Therefore, Theorem~\ref{thm:lower} proves that, for all $j \geq 2$, there
  exist smooth curves $X \subset \PP^n$ and elements $f \in \Pos_{X,2j}$ such
  that, for all $k < j - 1$, the cones $\Sos_{X,2j+2k}$ and
  $f \cdot \Sos_{X,2k}$ are well-separated.  Conversely,
  Corollary~\ref{cor:hilbertRegularity} establishes that, for all
  $f \in \Pos_{X,2j}$ and for all $k \geq j-1$, the cones $\Sos_{X,2j+2k}$ and
  $f \cdot \Sos_{X,2k}$ are not well-separated.  \hfill $\diamond$
\end{example}

To be comprehensive, we also consider the smooth convex lattice polygons
without an interior lattice point.  From the classification of smooth toric
surfaces (see Theorem~10.4.3 in \cite{CLS}), we see that the polytopes omitted
by Example~\ref{exa:sharp} correspond to Hirzebruch surfaces and the
projective plane.  Using similar techniques to analyze these polytopes, we
produce curves with sharp bounds contained in slightly smaller projective
spaces.

\begin{example}[Sharp bound for curves on Hirzebruch surfaces]
  \label{exa:Hirzebruch}
  For all $r,s \in \NN$, consider the smooth lattice polygon
  $Q \coloneq \conv\{ (0,0), (s+1,0), (r+s+1,1), (0,1) \} \subset \RR^2$.  Since
  $\abs{\smash{Q \cap \ZZ^2}} = r + 2 s + 4$, we obtain, for all $n \geq 3$, a
  Hirzebruch surface
  $Y_Q = \PP\bigl( \mathcal{O}_{\PP^1} \oplus \mathcal{O}_{\PP^1}(r) \bigr)
  \subset \PP^n$
  embedded by the very ample line bundle $A_Q$.  Fix an integer $j$ with
  $j \geq 2$.  Because we have $\Area(Q) = \frac{1}{2}r + s + 1$ and
  $\abs{\smash{\partial Q \cap \ZZ^2}} = r + 2s + 4$, the calculations in
  Example~\ref{exa:sharp} establish that, for the relevant curves
  $X \subset Y_Q$, we have
  \[
  \frac{2 \genus}{d} = \frac{\bigl(\tfrac{1}{2} r  + s + 1\bigr)(j-1)^2 -
    \frac{1}{2}(r + 2s + 4)(j-1) +1}{\bigl(\tfrac{1}{2}r + s + 1 \bigr)(j-1)}
  = j-2 + \frac{2-j}{\bigl(\tfrac{1}{2}r + s + 1 \bigr)(j-1)} \, ,
  \]
  and $j-3 < \bigl\lceil \frac{2 \genus}{d} \bigr\rceil \leq j-2$.  In
  addition, we have $\rr(X) = j-2$.  Therefore, Theorem~\ref{thm:lower} proves
  that, for all $n \geq 3$ and for all $j \geq 2$, there exist smooth curves
  $X \subset \PP^n$ and elements $f \in \Pos_{X,2j}$ such that, for all
  $k < j - 2$, the cones $\Sos_{X,2j+2k}$ and $f \cdot \Sos_{X,2k}$ are
  well-separated.  Conversely, Corollary~\ref{cor:hilbertRegularity}
  establishes that, for all $f \in \Pos_{X,2j}$ and for all $k \geq j-2$, the
  cones $\Sos_{X,2j+2k}$ and $f \cdot \Sos_{X,2k}$ are not well-separated.
  \hfill $\diamond$
\end{example}

\begin{example}[Sharp bounds for planar curves]
  \label{exa:PP2}
  Let $Q \coloneq \conv\{ (0,0), (1,0), (0,1) \} \subset \RR^2$ be the standard
  simplex.  Since $\abs{\smash{Q \cap \ZZ^2}} = 3$, we have
  $Y_Q = \PP^2 \subseteq \PP^2$ embedded by the very ample line bundle
  $A_Q = \mathcal{O}_{\PP^2}(1)$.  Fix an integer $j$ with $j \geq 2$.
  Because we have $\Area(Q) = \frac{1}{2}$ and
  $\abs{\smash{\partial Q \cap \ZZ^2}} = 3$, the calculations in
  Example~\ref{exa:sharp} establish that, for the relevant curves
  $X \subset Y_Q$, we have
  \[
  \frac{2 \genus}{d} = \frac{\tfrac{1}{2}(j-1)^2 -
    \frac{3}{2}(j-1) +1}{\tfrac{1}{2}(j-1)} = j-4 +
  \frac{2}{j-1} \, .
  \]
  When $j \geq 3$, we obtain
  $j-4 < \bigl\lceil \frac{2 \genus}{d} \bigr\rceil \leq j-3$ and, when
  $j = 2$, we have $\frac{2 \genus}{d} = 0$.  In addition, we have
  $\rr(X) = j-3$.  Therefore, Theorem~\ref{thm:lower} proves that, for all
  $j \geq 2$, there exist smooth curves $X \subset \PP^2$ and elements
  $f \in \Pos_{X,2j}$ such that, for all $0 \leq k < j - 3$, the cones
  $\Sos_{X,2j+2k}$ and $f \cdot \Sos_{X,2k}$ are well-separated.  Conversely,
  Corollary~\ref{cor:hilbertRegularity} establishes that, for all
  $f \in \Pos_{X,2j}$ and for all $k \geq \max\{j-3,0\}$, the cones
  $\Sos_{X,2j+2k}$ and $f \cdot \Sos_{X,2k}$ are not well-separated.  \hfill
  $\diamond$
\end{example}

\begin{example}[Sharp bounds on the Veronese surface]
  \label{exa:Veronese}
  Let $Q \coloneq \conv\{ (0,0), (2,0), (0,2) \} \subset \RR^2$.  Since
  $\abs{\smash{Q \cap \ZZ^2}} = 6$, we obtain the Veronese surface
  $Y_Q \subset \PP^5$ embedded by the very ample line bundle
  $A_Q = \mathcal{O}_{\PP^2}(2)$.  Fix an integer $j$ with $j \geq 2$.
  Because we have $\Area(Q) = 2$ and
  $\abs{\smash{\partial Q \cap \ZZ^2}} = 6$, the calculations in
  Example~\ref{exa:sharp} establish that, for the relevant curves
  $X \subset Y_Q$, we have
  \[
  \frac{2 \genus}{d} = \frac{2(j-1)^2 - 3(j-1) +1}{2(j-1)} = j - 2 +
  \frac{2-j}{2(j-1)} \, ,
  \]
  and $j - 3 < \bigl\lceil \frac{2 \genus}{d} \bigr\rceil \leq j-2$.  In
  addition, we have $\rr(X) = j - 2$.  Therefore, Theorem~\ref{thm:lower}
  proves that, for all $j \geq 2$, there exist smooth curves
  $X \subset \PP^5$ and elements $f \in \Pos_{X,2j}$ such that, for all
  $k < j - 2$, the cones $\Sos_{X,2j+2k}$ and $f \cdot \Sos_{X,2k}$ are
  well-separated.  Conversely, Corollary~\ref{cor:hilbertRegularity}
  establishes that, for all $f \in \Pos_{X,2j}$ and for all $k \geq j-2$, the
  cones $\Sos_{X,2j+2k}$ and $f \cdot \Sos_{X,2k}$ are not well-separated.
  \hfill $\diamond$
\end{example}

\begin{proof}[Proof of Theorem~\ref{thm:main}] 
  Corollary~\ref{cor:hilbertRegularity} proves the first part. If one
  overlooks the parameter $n$, then Theorem~\ref{thm:lower} immediately proves
  the second part.  By combining Example~\ref{exa:Hirzebruch} and
  Example~\ref{exa:PP2}, it follows that the required curves and nonnegative
  elements exist for all $n \geq 2$.
\end{proof}

We end this paper by lifting these degree bounds for strict-separators from
curves to some surfaces.  To accomplish this, we exploit the perturbation
methods used in the proof of Theorem~\ref{thm:lower}.

\begin{proposition}
  \label{pro:lifting}
  Fix a positive integer $j$ and a nonnegative integer $k$.  Let
  $X \subseteq \PP^n $ be an arithmetically Cohen--Macaulay real projective
  variety and let $X'$ be a hypersurface section of $X$ of degree $j$.  If
  there exists an element $f' \in \Pos_{X',2j}$ such that the cones
  $\Sos_{X',2j+2k}$ and $f' \cdot \Sos_{X',2k}$ are well-separated, then there
  exists an element $f \in \Pos_{X, 2j}$ such that the cones $\Sos_{X,2j+2k}$
  and $f \cdot \Sos_{X,2k}$ are also well-separated.
\end{proposition}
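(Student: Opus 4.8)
The plan is to lift a strict separator from $X'$ to $X$: pull it back along the quotient $R \to R'$ and then correct it with a small, carefully chosen perturbation, in the spirit of the deformation argument in the proof of Theorem~\ref{thm:lower}. First I would reduce to the case where $f'$ lies in the interior of $\Pos_{X',2j}$, using that being well-separated is an open condition on the element defining the second cone and that $\Pos_{X',2j}$ is full-dimensional; thus $f'$ may be taken strictly positive on $X'(\RR)$. Write $X' = X \cap \variety(h)$ with $h \in S_j \setminus I_X$. Since $X$ is arithmetically Cohen--Macaulay, $h$ is a nonzerodivisor on the domain $R$ and Remark~\ref{rem:ACM} gives $(I_{X'})_i = (I_X + \langle h \rangle)_i$ for all $i$, so $R'_i \cong R_i / h R_{i-j}$ and $\HF_{X'}(i) = \HF_X(i) - \HF_X(i-j)$. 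To lift $f'$, I would choose any $\tilde{f} \in R_{2j}$ mapping to $f'$; since $\tilde f$ is strictly positive on the compact set $X'(\RR)$, it remains positive on a Euclidean neighbourhood $W$ of $X'(\RR)$ in $X(\RR)$, while $h^2$ is bounded below by a positive constant on the compact complement $X(\RR) \setminus W$, so for $N$ large enough $f := \tilde f + N h^2$ is strictly positive on $X(\RR)$, maps to $f'$ (as $h^2 \in h R_j = \Ker(R_{2j} \to R'_{2j})$), and is a nonzerodivisor in $R$. I then aim to prove, by induction on $k$ and carrying this single $f$ through all degrees, that $\Sos_{X,2j+2k}$ and $f \cdot \Sos_{X,2k}$ are well-separated.

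For the inductive step, let $\ell' \in (R')^*_{2j+2k}$ be a strict separator for $\Sos_{X',2j+2k}$ and $f' \cdot \Sos_{X',2k}$, and let $\hat{\ell} \in R^*_{2j+2k}$ be its pullback along $R_{2j+2k} \to R'_{2j+2k}$. Because $\ell'$ separates on $X'$ and $f$ reduces to $f'$, the bilinear form $(g_1,g_2) \mapsto \hat\ell(g_1 g_2)$ on $R_{j+k}$ is positive-semidefinite with kernel $\Ker(R_{j+k}\to R'_{j+k}) = h R_k$, and the form $(g_1,g_2) \mapsto \hat\ell(f g_1 g_2)$ on $R_k$ is negative-semidefinite with kernel $\Ker(R_k\to R'_k) = h R_{k-j}$. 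The first kernel is always nonzero and the second is nonzero exactly when $k \geq j$, so $\hat\ell$ is not yet a strict separator on $X$. I would therefore set $\ell := \hat\ell + \varepsilon \mu$ for a suitable $\mu \in R^*_{2j+2k}$ and small $\varepsilon > 0$. Decomposing $R_{j+k} = C' \oplus h R_k$ and $R_k = C'' \oplus h R_{k-j}$ and running a Schur-complement computation on the $2 \times 2$ block form of each perturbed Gram matrix, one finds that for $\varepsilon$ sufficiently small the form $(g_1,g_2)\mapsto\ell(g_1 g_2)$ is positive-definite on $R_{j+k}$ provided $\mu$ is positive on $h^2 \cdot \Sos_{X,2k} \setminus \{0\}$, and the form $(g_1,g_2)\mapsto\ell(f g_1 g_2)$ is negative-definite on $R_k$ provided $\mu$ is negative on $f h^2 \cdot \Sos_{X,2k-2j} \setminus \{0\}$ (there is no second condition when $k < j$). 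Positive-definiteness and negative-definiteness of these two forms is exactly the assertion that $\ell$ strictly separates $\Sos_{X,2j+2k}$ and $f \cdot \Sos_{X,2k}$.

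It remains to produce $\mu$. Dualizing multiplication by $h^2$ gives a surjection $R^*_{2j+2k} \to R^*_{2k}$, under which the two requirements on $\mu$ become the existence of a strict separator on $X$ for $\Sos_{X,2k}$ and $f \cdot \Sos_{X,2(k-j)}$. When $k < j$ this is merely the pointedness of $\Sos_{X,2k}$, which follows from the hypothesis upon reducing a hypothetical nontrivial vanishing sum of squares modulo $h$. When $k \geq j$ it is the statement of the proposition with $k$ replaced by $k - j$, and since a strict separator for $\Sos_{X',2j+2k}$ and $f' \cdot \Sos_{X',2k}$ yields a strict separator for $\Sos_{X',2k}$ and $f' \cdot \Sos_{X',2(k-j)}$ on multiplying by a generic square of degree $2j$, the inductive hypothesis applies (with the same lift $f$), closing the induction. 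I expect the main obstacle to be the simultaneous control of both bilinear forms under the perturbation $\hat\ell \mapsto \hat\ell + \varepsilon\mu$: one must ensure the perturbed kernels fall all the way to $0$ in $R_{j+k}$ and in $R_k$, rather than lodging at $h R_k$ and $h R_{k-j}$, which is what forces the block-wise Schur-complement bookkeeping and the inductive construction of $\mu$. A lesser technical point is checking that $X'$ is well-behaved enough — it is arithmetically Cohen--Macaulay, and reduced in the situations of interest — for the pointedness and positive-definiteness claims to hold.
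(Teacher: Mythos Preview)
Your approach is correct (modulo the regularity of $X'$ that you already flag) but genuinely different from the paper's.  The paper does not try to build a strict separator on $X$ at all: instead it perturbs the \emph{hypersurface section}, replacing $h$ by nearby $h_\varepsilon$ and showing---via point evaluations and lower semicontinuity of rank, exactly as in the proof of Theorem~\ref{thm:lower}---that the strict separator on $X'$ deforms to one on each $X'_\varepsilon$.  Then any putative multiplier $g \in \Sos_{X,2k}$ with $fg \in \Sos_{X,2j+2k}$ would restrict to zero on every $X'_\varepsilon$, hence lie in $\langle h_\varepsilon \rangle$ for a full neighbourhood of $\varepsilon$, forcing $g=0$; well-separation then follows from pointedness.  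Your route is more algebraic and constructive: you pull $\ell'$ back to $\hat\ell$, identify its kernels as $hR_k$ and $hR_{k-j}$, and repair them by adding $\varepsilon\mu$, reducing the existence of $\mu$ via the surjection $R^*_{2j+2k} \twoheadrightarrow R^*_{2k}$ to the same problem one step down, which you close by induction on $\lfloor k/j \rfloor$.  This buys an explicit separator and avoids the deformation of $X'$, at the cost of the inductive bookkeeping and the Schur-complement computation; the paper's argument is shorter and avoids induction, but is less constructive.  One caution: your kernel identifications for the negative-semidefinite form, and the step ``multiply by a generic square of degree $2j$'' in the inductive descent, both silently use that $f'$ and a generic $q \in R'_j$ are nonzerodivisors on $R'$---reducedness of $X'$ alone is not quite enough, you want $X'$ integral (or at least $f'$ a nonzerodivisor), which holds in all the applications but should be stated as a hypothesis if you write this up.
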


\begin{proof}
  We first lift $f'$ to a nonnegative element on $X$.  As observed in the
  proof of Proposition~\ref{thm:lower}, asserting that the cones
  $\Sos_{X',2j+2k}$ and $f' \cdot \Sos_{X',2k}$ are well-separated is an open
  condition in the Euclidean topology on the element $f' \in R_{2j}'$.  Hence,
  we may assume that $f'$ is positive on $X'(\RR)$.  Choose a homogeneous
  polynomial $\tilde{f}' \in S_{2j}$ that maps to $f'$ under the canonical
  quotient homomorphism from $S$ to $R' = S/I_{X'}$.  By hypothesis, $X'$ is a
  hypersurface section of $X$ of degree $j$, so there is a nonzero polynomial
  $h \in S_j$ such that $X' = X \cap \variety(h) \subset \PP^n$.  Moreover, we
  have $I_{X'} = I_X + \langle h \rangle$ because $X$ is arithmetically
  Cohen--Macaulay. Let $\tilde{X} \subseteq \AA^{n+1}(\RR)$ be the affine cone
  of $X$ and let $\SS^n \subset \AA^{n+1}(\RR)$ be the unit sphere.  Since
  $\tilde{f}'$ is positive on $X'(\RR)$, there exists a Euclidean
  neighbourhood $U$ of $\SS^n \cap \variety(h) \subset \SS^n \cap \tilde{X}$
  such that $\tilde{f}'$ is positive on $U$.  On the compact set
  $K \coloneq ( \SS^n \cap \tilde{X} ) \setminus U$, the function $h^2$ is
  positive, so
  \[
  \delta \coloneq \frac{ \inf_K h^2 }{\sup_K \abs{\tilde{f}'}}
  \] 
  is a positive real number.  It follows that, for all
  $\lambda > \frac{1}{\delta}$, the polynomial
  $\tilde{f} \coloneq \tilde{f}' + \lambda h^2$ is positive on $X(\RR)$. Therefore,
  if $f$ denotes the image of $\tilde{f}$ under the canonical quotient
  homomorphism from $S$ to $R = S/I_{X}$, then we deduce that
  $f \in \Pos_{X,2j}$.

  We next deform $X'$ and $f'$.  If $h = \sum_{\abs{u} = 2j} c_{u} x^u$ where
  $u \in \NN^{n+1}$ and $c_{u} \in \RR$, then consider the homogeneous
  polynomial
  $h_{\varepsilon} \coloneq \sum_{\abs{u} = 2j} (c_{u} + \varepsilon_u)x^u$
  with $\abs{\varepsilon_u} < \varepsilon$ created by perturbing the
  coefficients and the corresponding hypersurface section
  $X_{\varepsilon}' \coloneq X \cap \variety(h_\varepsilon) \subset
  \PP^n$. Set $e \coloneq \HF_{X'}(2j+2k)$ and fix points
  $p_1, p_2, \dotsc, p_e$ in $X'$ for which the linear functionals
  $p_1^*, p_1^*, \dotsc, p_e^*$, defined by point evaluation, form a basis for
  $(R'_{2j+2k})^*$.  Since the cones $\Sos_{X',2j+2k}$ and
  $f' \cdot \Sos_{X',2k}$ are well-separated, there exists a linear functional
  $\ell \in (R'_{2j+2k})^*$ satisfying $\ell(g) > 0$ for all nonzero
  $g \in \Sos_{X',2j+2k}$ and $\ell(g) < 0$ for all nonzero
  $g \in f' \cdot \Sos_{X',2k}$.  It follows that there are real numbers
  $\lambda_1, \lambda_2, \dotsc, \lambda_e$ such that
  $\ell = \lambda_1 p_1^* + \lambda_2 p_2^* + \dotsb + \lambda_e p_e^*$.  By
  choosing affine representatives
  $\tilde{p}_1, \tilde{p}_2, \dotsc, \tilde{p}_e \in \AA^{n+1}$, we obtain
  $\tilde{\ell} \coloneq \lambda_1\tilde{p}_1^* + \lambda_2 \tilde{p}_2^* +
  \dotsb + \lambda_e \tilde{p}_e^*$ in $S_{2j+2k}^*$.  As in the proof of
  Theorem~\ref{thm:lower}, there are two symmetric forms associated to the
  linear functional $\tilde{\ell}$: the first
  $\sigma_{j+k}^*(\tilde{\ell}) \colon S_{j+k} \otimes_\RR S_{j+k} \to \RR$ is
  defined by
  $\tilde{g}_1 \otimes \tilde{g}_2 \mapsto \tilde{\ell}(\tilde{g}_1
  \tilde{g}_2)$ and the second
  $\tau_{j}^*(\tilde{\ell}) \colon S_k \otimes_\RR S_k \to \RR$ is defined by
  $\tilde{g}_1 \otimes \tilde{g}_2 \mapsto \tilde{\ell}(\tilde{f} \tilde{g}_1
  \tilde{g}_2 )$.  The assertion that $\ell$ is a strict separator for the
  cones $\Sos_{X',2j+2k}$ and $f' \cdot \Sos_{X',2k}$ is equivalent to saying
  that symmetric form $\sigma_{j+k}^*(\tilde{\ell})$ is positive-semidefinite
  with $\Ker\bigl( \sigma_{j+k}^*(\tilde{\ell}) \bigr) = (I_{X'})_{j+k}$ and
  symmetric form $\tau_{k}^*(\tilde{\ell})$ is negative-semidefinite with
  $\Ker\bigl( \tau_{k}^*(\tilde{\ell}) \bigr) = (I_{X'})_{k}$.  To build the
  applicable linear functional on a deformation $X'_\varepsilon$, let
  $q_1, q_2, \dotsc, q_e$ denote the points on $X'_\varepsilon$ corresponding
  to the fixed points $p_1, p_2, \dotsc, p_e$ on $X$.  Choose affine
  representatives
  $\tilde{q}_1, \tilde{q}_2, \dotsc, \tilde{q}_e \in \AA^{n+1}$ and consider
  the linear functional
  $\tilde{\ell}_{\varepsilon} \coloneq \lambda_1 \tilde{q}_1^* + \lambda_2
  \tilde{q}_2^* + \dotsb + \lambda_e \tilde{q}_e^*$ in $S_{2j+2k}^*$.  By
  construction, we have
  $(I_{X_{\varepsilon}})_{j+k} \subseteq \Ker\bigl(
  \sigma_{j+k}^*(\tilde{\ell}_{\varepsilon}) \bigr)$ and
  $(I_{X_{\varepsilon}})_{k} \subseteq \Ker\bigl(
  \tau_{k}^*(\tilde{\ell}_{\varepsilon}) \bigr)$.  For sufficiently small
  $\varepsilon > 0$, the symmetric forms
  $\sigma_{j+k}^*(\tilde{\ell}_\varepsilon)$ and
  $\tau_{k}^*(\tilde{\ell}_\varepsilon)$ are small perturbations of
  $\sigma_{j+k}^*(\tilde{\ell})$ and $\tau_{k}^*(\tilde{\ell})$ respectively.
  The rank of a symmetric form is lower semicontinuous, so we have
  $\rank \bigl( \sigma_{j+k}^*(\tilde{\ell}_\varepsilon) \bigr) \geq \rank
  \bigl( \sigma_{j+k}^*(\tilde{\ell}) \bigr)$ and
  $\rank \bigl( \tau_{k}^*(\tilde{\ell}_\varepsilon) \bigr) \geq \rank \bigl(
  \tau_{k}^*(\tilde{\ell}) \bigr)$.  It follows that
  $\Ker\bigl( \sigma_{j+k}^*(\tilde{\ell}_{\varepsilon}) \bigr) =
  (I_{X'_{\varepsilon}})_{j+k}$ and
  $\Ker\bigl( \tau_{k}^*(\tilde{\ell}_{\varepsilon}) \bigr) =
  (I_{X'_{\varepsilon}})_{k}$ because $\HF_{X'_\varepsilon}(k) = \HF_{X'}(k)$
  and $\HF_{X'_\varepsilon}(j+k) = \HF_{X'}(j+k)$.  In addition, being
  positive-semidefinite or negative-semidefinite is an open condition in the
  Euclidean topology, so the symmetric form
  $\sigma_{j+k}^*(\tilde{\ell}_\varepsilon)$ is positive-semidefinite and
  symmetric form $\tau_{k}^*(\tilde{\ell}_\varepsilon)$ is
  negative-semidefinite.  If $f'_\varepsilon$ denotes the image of $\tilde{f}$
  under the canonical quotient map from $S$ to
  $R'_{\varepsilon} = S/(I_X + \langle h_\varepsilon \rangle)$, then we
  conclude that the linear functional
  $\ell_\varepsilon \coloneq \lambda_1 q_1^* + \lambda_2 q_2^* + \dotsb +
  \lambda_e q_e^*$ is a strict separator for the cones
  $\Sos_{X'_\varepsilon,2j+2k}$ and
  $f'_\varepsilon \cdot \Sos_{X'_\varepsilon,2j+2k}$.

  Lastly, suppose that there exists a nonzero $g \in \Sos_{X,2k}$ such that
  $fg \in \Sos_{X,2j+2k}$.  By construction, the nonnegative element $f$
  restricts to $f'_\varepsilon$ and the cones $\Sos_{X'_\varepsilon,2j+2k}$
  and $f'_\varepsilon \cdot \Sos_{X'_\varepsilon,2j+2k}$ are well-separated,
  so the multiplier $g$ restricts to $0$ on $X'_\varepsilon$.  Equivalently,
  if $g'_\varepsilon$ denotes the image of $g$ under the canonical quotient
  map from $R$ to $R'_\varepsilon = R/\langle h_\varepsilon \rangle$, then we
  have $g'_\varepsilon \in \langle h_\varepsilon \rangle$.  Since this holds
  for all sufficiently small $\varepsilon \geq 0$, we see that $g = 0$ in $R$
  which is a contradiction.  Therefore, the cones $\Sos_{X,2j+2k}$ and
  $f \cdot \Sos_{X,2k}$ are also well-separated.
\end{proof}

The final two examples illustrate this proposition and provide explicit degree
bounds on strict-separators on some smooth toric surfaces.  Unlike for curves,
our techniques do not typically prove that these degree bounds are sharp.
However, for the classical case of ternary octics, we do obtain tight degree
bounds for the existence of sum-of-squares multipliers.

\begin{example}[Strict-separators on toric surfaces of minimal degree]
  \label{exa:strictMinimalDegree}
  Let $X$ be a toric surface of minimal degree.  By combining
  Example~\ref{exa:Hirzebruch} or Example~\ref{exa:Veronese} with
  Proposition~\ref{pro:lifting}, it follows that, for all $j \geq 2$, there
  exist elements $f \in \Pos_{X,2j}$ such that, for all $k < j-2$, the cones
  $\Sos_{X,2j+2k}$ and $f \cdot \Sos_{X,2k}$ are well-separated.  In
  constrast, Example~\ref{exa:minimalSurface} only establishes that, for all
  $f \in \Pos_{X,2j}$, the cones $\Sos_{X,j^2+j}$ and $f \cdot \Sos_{X,j^2-j}$
  are not well-separated, so there is a gap between our bounds.  Since
  Example~\ref{exa:minimalSurface} also proves that, for all
  $f \in \Pos_{X,2j}$, the cones $\Sos_{X,4j-4}$ and $f \cdot \Pos_{X,2(j-1)}$
  are not well-separated, there is even a gap when we consider all nonnegative
  multipliers. \hfill $\diamond$
\end{example}

\begin{proof}[Proof of Theorem~\ref{thm:surface}]
  Example~\ref{exa:minimalSurface} proves the first half and
  Example~\ref{exa:strictMinimalDegree} proves the second.
\end{proof}

\begin{example}[Strict-separators on the projective plane]
  \label{exa:strictPP^2}
  Let $Q \coloneq \conv\{ (0,0), (1,0), (0,1) \} \subset \RR^2$ and let
  $\PP^2 = Y_Q \subseteq \PP^2$ be embedded by the very ample line bundle
  $A_Q = \mathcal{O}_{\PP^2}(1)$.  By combining Example~\ref{exa:PP2} and
  Proposition~\ref{pro:lifting}, it follows that, for all $j \geq 2$, there
  exist elements $f \in \Pos_{\PP^2,2j}$ such that, for all $k < j-3$, the
  cones $\Sos_{\PP^2,2j+2k}$ and $f \cdot \Sos_{\PP^2,2k}$ are well-separated.
  Example~\ref{exa:formsOnPP2} shows that, for all $f \in \Pos_{\PP^2, 8}$,
  the cones $\Sos_{\PP^2, 12}$ and $f \cdot \Sos_{\PP^2,4}$ are not
  well-separated, so this degree bound for strict-separators on $\PP^2$ is
  sharp when $j = 4$. \hfill $\diamond$
\end{example}

\section*{Acknowledgements}

\noindent
We thank Erwan Brugall\'e, Lionel Lang, and Mike Roth for useful conversations.
The first author was partially supported by an Alfred P.{} Sloan Fellowship,
the Simons Institute, and an NSF CAREER award DMS--1352073; the second author
was partially supported by NSERC; and the third author was partially supported
by the FAPA funds from Universidad de los Andes.

\begin{bibdiv}
\begin{biblist}[\normalsize]

\bib{ACGH}{book}{
  label={AC+},
  author={Arbarello, Enrico},
  author={Cornalba, Maurizio},
  author={Griffiths, Phillip A.},
  author={Harris, Joe},
  title={\href{http://dx.doi.org/10.1007/978-1-4757-5323-3}%
    {Geometry of algebraic curves. Vol. I}},
  series={Grundlehren der Mathematischen Wissenschaften},
  volume={267},
  publisher={Springer-Verlag, New York},
  date={1985},
  pages={xvi+386},
}

\bib{Beauville}{book}{
  label={Bea},
  author={Beauville, Arnaud},
  title={\href{http://dx.doi.org/10.1017/CBO9780511623936}%
    {Complex algebraic surfaces}},
  series={London Mathematical Society Student Texts},
  volume={34},
  edition={2},
  publisher={Cambridge University Press, Cambridge},
  date={1996},
  pages={x+132},
}



\bib{Blekherman}{article}{
  label={Ble},
  author={Blekherman, Grigoriy},
  title={\href{http://dx.doi.org/10.1090/S0002-9939-2014-12253-2}%
    {Positive Gorenstein ideals}},
  journal={Proc. Amer. Math. Soc.},
  volume={143},
  date={2015},
  number={1},
  pages={69--86},
}

\bib{BGP}{article}{
  author={Blekherman, Grigoriy},
  author={Gouveia, Jo\~ao},
  author={Pfeiffer, James},
  title={\href{http://dx.doi.org/10.1007/s00209-016-1644-7}%
    {Sums of squares on the hypercube}},
  journal={Math. Z.},
  volume={284},
  date={2016},
  number={1-2},
  pages={41--54},
}

\bib{BPT}{book}{
  author={Blekherman, Grigoriy},
  author={Parrilo, Pablo A.},
  author={Thomas, Rekha R.},
  title={Semidefinite optimization and convex algebraic geometry},
  series={MOS-SIAM Series on Optimization},
  volume={13},
  publisher={Society for Industrial and Applied Mathematics (SIAM)},
  place={Philadelphia, PA},
  date={2013}
} 



\bib{BSV}{article}{
  author={Blekherman, Grigoriy},
  author={Smith, Gregory G.},
  author={Velasco, Mauricio},
  title={\href{http://dx.doi.org/10.1090/jams/847}%
    {Sums of squares and varieties of minimal degree}},
  journal={J. Amer. Math. Soc.},
  volume={29},
  date={2016},
  number={3},
  pages={893--913},
}

\bib{Br}{article}{
  label={Bru},
  author={Brugall\'e, Erwan},
  title={\href{http://dx.doi.org/10.4171/LEM/61-3/4-9}%
    {Pseudoholomorphic simple Harnack curves}},
  journal={Enseign. Math.},
  volume={61},
  date={2015},
  number={3-4},
  pages={483--498},
}

\bib{Cox}{article}{
  label={Cox},
  author={Cox, David A.},
  title={\href{http://dx.doi.org/10.2748/tmj/1178225594}%
    {The functor of a smooth toric variety}},
  journal={Tohoku Math. J. (2)},
  volume={47},
  date={1995},
  number={2},
  pages={251--262},
}

\bib{CLS}{book}{
  author={Cox, David A.},
  author={Little, John B.},
  author={Schenck, Henry K.},
  title={Toric varieties},
  series={Graduate Studies in Mathematics},
  volume={124},
  publisher={American Mathematical Society},
  place={Providence, RI},
  date={2011},
  pages={xxiv+841},
}



\bib{EisSyz}{book}{
  label={Eis},
  author={Eisenbud, David},
  title={The geometry of syzygies},
  series={Graduate Texts in Mathematics},
  volume={229},
  publisher={Springer-Verlag, New York},
  date={2005},
  pages={xvi+243},
}	

\bib{EH}{article}{
  author={Eisenbud, David},
  author={Harris, Joe},
  title={On varieties of minimal degree (a centennial account)},
  conference={
    title={Algebraic geometry, Bowdoin, 1985},
    address={Brunswick, Maine},
    date={1985},
  },
  book={
    series={Proc. Sympos. Pure Math.},
    volume={46},
    publisher={Amer. Math. Soc.},
    place={Providence, RI},
  },
  date={1987},
  pages={3--13}
}


\bib{surfer}{article}{
  label={Gre},
  author={Greuel, Gert-Martin},
  title={SURFER 2012, ray-tracing software for visualizing real algebraic
    surfaces},
  note={available at 
    \href{https://imaginary.org/program/surfer}%
    {\texttt{https://imaginary.org/program/surfer}}}
}

\bib{GLP}{article}{
  author={Gruson, Laurent},
  author={Lazarsfeld, Robert},
  author={Peskine, Christian},
  title={\href{http://dx.doi.org/10.1007/BF01398398}%
    {On a theorem of Castelnuovo, and the equations defining space curves}},
  journal={Invent. Math.},
  volume={72},
  date={1983},
  number={3},
  pages={491--506},
}


\bib{HarrisGenus}{article}{
  label={Ha1},
  author={Harris, Joe},
  title={\href{http://dx.doi.org/10.1007/BF01363895}%
    {The genus of space curves}},
  journal={Math. Ann.},
  volume={249},
  date={1980},
  number={3},
  pages={191--204},
}

\bib{Harris3}{article}{
  label={Ha2},
  author={Harris, Joe},
  title={A bound on the geometric genus of projective varieties},
  journal={Ann. Scuola Norm. Sup. Pisa Cl. Sci. (4)},
  volume={8},
  date={1981},
  number={1},
  pages={35--68},
}

\bib{HarrisMontreal}{book}{
  label={Ha3},
  author={Harris, Joe},
  title={Curves in projective space},
  series={S\'eminaire de Math\'ematiques Sup\'erieures},
  volume={85},
  publisher={Presses de l'Universit\'e de Montr\'eal, Montreal, Que.},
  date={1982},
  pages={138},
}


\bib{Hilbert2}{article}{
  label={Hil},
  author={Hilbert, David},
  title={\href{http://dx.doi.org/10.1007/BF02391990}%
    {\"Uber tern\"are definite Formen}},
  journal={Acta Math.},
  volume={17},
  date={1893},
  number={1},
  pages={169--197},
}

\bib{IK}{book}{
  author={Iarrobino, Anthony},
  author={Kanev, Vassil},
  title={Power sums, Gorenstein algebras, and determinantal loci},
  series={Lecture Notes in Mathematics},
  volume={1721},
  publisher={Springer-Verlag, Berlin},
  date={1999},
}

\bib{J}{book}{
  label={Jou},
  author={Jouanolou, Jean-Pierre},
  title={Th\'eor\`emes de Bertini et applications},
  publisher={Birkh\"auser},
  series={Progress in Mathematics},
  volume={42},
  place={Basel},
  date={1983}
}

\bib{KO}{article}{
  label={KO},
  author={Kenyon, Richard},
  author={Okounkov, Andrei},
  title={\href{http://dx.doi.org/10.1215/S0012-7094-06-13134-4}%
    {Planar dimers and Harnack curves}},
  journal={Duke Math. J.},
  volume={131},
  date={2006},
  number={3},
  pages={499--524},
}

\bib{LPR}{article}{
  author={Lombardi, Henri},
  author={Perrucci, Daniel},
  author={Roy, Marie-Fran\c{c}oise},
  title={An elementary recursive bound for effective Positivstellensatz and
    Hilbert 17-th problem},
  journal={to appear in Mem. Amer. Math. Soc. and available at
    \href{http://arxiv.org/abs/1404.2338}{\texttt{arXiv:1404.2338 [math.AG]}}}
}

\bib{Migliore}{book}{
  label={Mig},
  author={Migliore, Juan C.},
  title={\href{http://dx.doi.org/10.1007/978-1-4612-1794-7}%
    {Introduction to liaison theory and deficiency modules}},
  series={Progress in Mathematics},
  volume={165},
  publisher={Birkh\"auser Boston, Inc., Boston, MA},
  date={1998},
  pages={xiv+215},
}


\bib{Nagel}{article}{
  label={Nag},
  author={Nagel, U.},
  title={\href{http://dx.doi.org/10.1007/s00209-003-0520-4}%
    {Non-degenerate curves with maximal Hartshorne-Rao module}},
  journal={Math. Z.},
  volume={244},
  date={2003},
  number={4},
  pages={753--773},
}

\vfill

\bib{Pas}{article}{
  label={Pas},
  author={Pasechnik, Dmitrii~V.},
  title={An efficient sum of squares nonnegativity certificate for quaternary
    quartic},
  journal={available at
    \href{http://arxiv.org/abs/1511.03473}{\texttt{arXiv:1511.03473
        [math.AG]}}}
}

\bib{RamanaGoldman}{article}{
  author={Ramana, Motakuri},
  author={Goldman, A. J.},
  title={\href{http://dx.doi.org/10.1007/BF01100204}%
    {Some geometric results in semidefinite programming}},
  journal={J. Global Optim.},
  volume={7},
  date={1995},
  number={1},
  pages={33--50},
}

\bib{Rez}{article}{
  label={Rez},
  author={Reznick, Bruce},
  title={\href{http://dx.doi.org/10.1007/BF02572604}%
    {Uniform denominators in Hilbert's seventeenth problem}},
  journal={Math. Z.},
  volume={220},
  date={1995},
  number={1},
  pages={75--97},
}

\bib{ScheidererC}{article}{
  label={Sc1},
  author={Scheiderer, Claus},
  title={\href{http://dx.doi.org/10.1007/s00209-003-0568-1}%
    {Sums of squares on real algebraic curves}},
  journal={Math. Z.},
  volume={245},
  date={2003},
  number={4},
  pages={725--760},
}

\bib{ScheidererP}{article}{
  label={Sc2},
  author={Scheiderer, Claus},
  title={\href{http://dx.doi.org/10.1007/s00229-011-0484-3}%
    {A Positivstellensatz for projective real varieties}},
  journal={Manuscripta Math.},
  volume={138},
  date={2012},
  number={1-2},
  pages={73--88},
}

\bib{ScheidererS}{article}{
  label={Sc3},
  author={Scheiderer, Claus},
  title={\href{http://dx.doi.org/10.1137/17M1115113}%
    {Semidefinite representation for convex hulls of real algebraic curves}},
  journal={SIAM J. Appl. Algebra Geom.},
  volume={2},
  date={2018},
  number={1},
  pages={1--25},
}

\bib{Schneider}{book}{
  label={ScR},
  author={Schneider, Rolf},
  title={Convex Bodies: The Brunn-Minkowski Theory},
  series={Encyclopedia of Mathematics and Its Applications},
  volume={44},
  publisher={Cambridge University Press},
  place={Cambridge},
  date={1993},
}

		
\end{biblist}
\end{bibdiv}
\raggedright

\end{document}